\documentclass[11pt,reqno]{amsart}
\usepackage[utf8]{inputenc}
\usepackage{amsfonts,amsthm,amsmath,amssymb, bbold, amscd, amssymb, amscd, mathrsfs, stmaryrd, hyperref,  bbm, enumerate, url}
\usepackage[all, cmtip]{xy}
\usepackage[text={6.5in,9in},centering,letterpaper,margin=1in]{geometry}
\usepackage{MnSymbol}
\usepackage{float}
\usepackage{array,diagbox}
\usepackage{graphicx}
\usepackage{enumerate}
\usepackage{hyperref}
\usepackage{color}
\usepackage{mathtools}
\usepackage{tikz}
\usepackage{tikz-cd}
\usepackage{pgf}
\usetikzlibrary{math}
\usetikzlibrary{decorations.markings}
\usetikzlibrary{decorations.pathreplacing}
\usetikzlibrary{arrows,shapes,positioning}
\tikzstyle directed=[postaction={decorate,decoration={markings,
    mark=at position #1 with {\arrow{>}}}}]
\tikzstyle rdirected=[postaction={decorate,decoration={markings,
    mark=at position #1 with {\arrow{<}}}}]

\tikzset{anchorbase/.style={baseline={([yshift=-0.5ex]current bounding box.center)}},
    tinynodes/.style={font=\tiny,text height=0.75ex,text depth=0.15ex},
    smallnodes/.style={font=\scriptsize,text height=0.75ex,text depth=0.15ex},
    >={Latex[length=1mm, width=1.5mm]}
  }
  \tikzset{
    partial ellipse/.style args={#1:#2:#3}{
        insert path={+ (#1:#3) arc (#1:#2:#3)}
    }
}
\tikzcdset{arrow style=tikz, diagrams={>=stealth}}
  
\usetikzlibrary{patterns}
\usepackage[
    maxbibnames=99,
    backend=bibtex,
    style=alphabetic,
    giveninits=true
]{biblatex}
\DeclareFieldFormat{pages}{#1}
\renewbibmacro{in:}{%
  \ifentrytype{article}
    {}
    {\bibstring{in}%
     \printunit{\intitlepunct}}}
\DeclareFieldFormat
[article,inbook,incollection,inproceedings,patent,thesis,unpublished]
  {title}{\mkbibemph{#1}}
\DeclareFieldFormat{journaltitle}{#1\isdot}
\DeclareFieldFormat[article]{volume}{\mkbibbold{#1}}
\DeclareFieldFormat[article]{number}{\bibstring{number}\addnbspace #1}

\renewbibmacro*{journal+issuetitle}{%
  \usebibmacro{journal}%
  \setunit*{\addspace}%
  \iffieldundef{series}
    {}
    {\newunit
     \printfield{series}%
     \setunit{\addspace}}%
  \printfield{volume}%
  \setunit{\addspace}%
  \usebibmacro{issue+date}%
  \setunit{\addcomma\space}%
  \printfield{number}%
  \setunit{\addcolon\space}%
  \usebibmacro{issue}%
  \setunit{\addcomma\space}%
  \printfield{eid}
  \newunit}

\newtheoremstyle{mystyle}
  {}
  {}
  {\itshape}
  {}
  {\bfseries}
  {.}
  { }
  {\thmname{#1}\thmnumber{ #2}\thmnote{ (#3)}}

\tikzset{anchorbase/.style={baseline={([yshift=-0.5ex]current bounding box.center)}}}
\usetikzlibrary{calc}
\usetikzlibrary{decorations.markings}
\usetikzlibrary{decorations.pathreplacing}
\usetikzlibrary{arrows,shapes,positioning}
\tikzstyle directed=[postaction={decorate,decoration={markings,
    mark=at position #1 with {\arrow{>}}}}]
\tikzstyle rdirected=[postaction={decorate,decoration={markings,
    mark=at position #1 with {\arrow{<}}}}]

\newcommand{\capfig}[4]{ 
\begin{scope}[shift={(#1,#2-5.9)},scale=#3]
\path[fill=blue,opacity=.2] (1,0) arc[start angle=0, end angle=180,x radius=1,y radius=.5] 
		to [out=90,in=180] (0,1.5) to [out=0,in=90] (1,0);
	\path[fill=blue,opacity=.2] (1,0) arc[start angle=360, end angle=180,x radius=1,y radius=.5] 
		to [out=90,in=180] (0,1.5) to [out=0,in=90] (1,0);
	\draw[very thick] (0,0) ellipse (1 and 0.5);
	\draw[very thick] (-1,0) to [out=90,in=180] (0,1.5) to [out=0,in=90] (1,0);
\end{scope} 
}
\newcommand{\braid}[4]{
\begin{scope}[shift={(#1,#2-5.9)},scale=#3]

 \draw[very thick] (0,0) to (1,2);
 \draw[very thick] (0,2) to (.4,1.2);
 \draw[very thick] (.6,0.8) to (1,0);

\end{scope} 
}
\newcommand{\cupcap}[4]{
\begin{scope}[shift={(#1,#2-5.9)},scale=#3]
\draw[very thick] (0,0) to [in=100,out=80](1,0);
\draw[very thick] (0,2) to [in=-100, out=-80](1,2);
\end{scope} 
}
\newcommand{\lines}[4]{
\begin{scope}[shift={(#1,#2-5.9)},scale=#3]
\draw[very thick] (0,0) to (0,2);
\draw[very thick] (1,0) to (1,2);
\end{scope} 
}

\newcommand{\nsphere}[4]{
\begin{scope}[shift={(#1,#2-5.9)},scale=#3]
	\path [fill=blue,opacity=0.3] (0,0) circle (1);
	\draw (-1,0) .. controls (-1,-.4) and (1,-.4) .. (1,0);
	\draw[dashed] (-1,0) .. controls (-1,.4) and (1,.4) .. (1,0);
	\draw[very thick] (0,0) circle (1);
	
\end{scope}

}
\newcommand{\fsphere}[3]{
\draw[white, line width=1mm] (#1-#3,#2) to [out=270,in=180] (#1,#2-#3*2/3) to [out=0,in=270] (#1+#3,#2);
\draw (#1-#3,#2) to [out=270,in=180] (#1,#2-#3*2/3) to [out=0,in=270] (#1+#3,#2);
\draw[thick] (#1,#2) circle (#3);
}

\newcommand{\bsphere}[3]{
\draw[dashed] (#1-#3,#2) to [out=90,in=180] (#1,#2+#3*2/3) to [out=0,in=90] (#1+#3,#2);
}

\newcommand{\fgsphere}[3]{
\draw[white, line width=1mm] (#1-#3,#2) to [out=270,in=180] (#1,#2-#3*2/3) to [out=0,in=270] (#1+#3,#2);
\draw[opacity=.3] (#1-#3,#2) to [out=270,in=180] (#1,#2-#3*2/3) to [out=0,in=270] (#1+#3,#2);
\draw[thick, opacity=.3] (#1,#2) circle (#3);
}

\newcommand{\bgsphere}[3]{
\draw[dashed,opacity=.3] (#1-#3,#2) to [out=90,in=180] (#1,#2+#3*2/3) to [out=0,in=90] (#1+#3,#2);
}

\newcommand{\sphere}[3]{
\bsphere{#1}{#2}{#3}
\fsphere{#1}{#2}{#3}
}
\newcommand{\trefoil}[4]{
\begin{scope}[shift={(#1,#2-5.9)},scale=#3]
\draw[#4] (-0.3,5.15) to [out=150,in=270] (-1,6) to   [out=90,in=180] (0,6.8) to [out=0,in=110] (0.85,6.3)
(-0.7,6) to [out=20,in=160] (1,6) to   [out=340,in=60] (1.4,5) to [out=240,in=330] (0.3,4.85)
(0.85,5.7) to [out=240,in=30] (0,5) to   [out=210,in=300] (-1.4,5) to [out=120,in=200] (-1.15,5.85);
\end{scope}
}

\newcommand{\hopflink}[4]{
\begin{scope}[shift={(#1,#2)},scale=#3]
\begin{scope}[yscale=.66]
\draw[#4]
(.5,0) to [out=90,in=0] (-.5,1) to [out=180,in=90] (-1.5,0);
\draw[white, line width=1mm]
(-.5,0) to [out=90,in=180] (.5,1) to [out=0,in=90] (1.5,0);
\draw[#4]
(-.5,0) to [out=90,in=180] (.5,1) to [out=0,in=90] (1.5,0) to [out=270,in=0] (.5,-1) to [out=180,in=270] (-.5,0);
\draw[white, line width=1mm]
(.5,0) to [out=270,in=0] (-.5,-1);
\draw[#4]
(.5,0) to [out=270,in=0] (-.5,-1) to [out=180,in=270] (-1.5,0);
\end{scope}
\end{scope}
}
\newcommand{\hopflinkcol}[5]{
\begin{scope}[shift={(#1,#2)},scale=#3]
\begin{scope}[yscale=.66]
\draw[#5]
(.5,0) to [out=90,in=0] (-.5,1) to [out=180,in=90] (-1.5,0);
\draw[white, line width=1mm]
(-.5,0) to [out=90,in=180] (.5,1) to [out=0,in=90] (1.5,0);
\draw[#4]
(-.5,0) to [out=90,in=180] (.5,1) to [out=0,in=90] (1.5,0) to [out=270,in=0] (.5,-1) to [out=180,in=270] (-.5,0);
\draw[white, line width=1mm]
(.5,0) to [out=270,in=0] (-.5,-1);
\draw[#5]
(.5,0) to [out=270,in=0] (-.5,-1) to [out=180,in=270] (-1.5,0);
\end{scope}
\end{scope}
}

\newcommand{\unknot}[4]{
\begin{scope}[shift={(#1-1,#2)},scale=#3]
\begin{scope}[yscale=.66]
\draw[#4]
(-.5,0) to [out=90,in=180] (.5,1) to [out=0,in=90] (1.5,0) to [out=270,in=0] (.5,-1) to [out=180,in=270] (-.5,0);
\end{scope}
\end{scope}
}

\newcommand{\torushole}[4]{
\draw[#4] (#1-#3/2,#2+#3/6) to [out=300,in=180] (#1,#2-#3/6) to [out=0,in=240] (#1+#3/2,#2+#3/6);
\draw[#4] (#1-#3/3,#2) to [out=45,in=180] (#1,#2+#3/6) to [out=0,in=135] (#1+#3/3,#2);
}

\newcommand{\torus}[4]{
\draw[#4] (#1,#2) ellipse (#3 and #3*2/3);
\torushole{#1}{#2}{#3}{#4}
}

\newcommand{\lasagnafillingfigure}[1]{
    \begin{tikzpicture}[anchorbase,scale=#1]
        \begin{scope}[shift={(-18,-1)},xscale=3,yscale=12]
            \draw[dashed] (0,1) to [out=0,in=90] (1,0) to [out=270,in=0] (0,-1)
            (0,1) to [out=180,in=90] (-1,0) to [out=270,in=180] (0,-1);
        \end{scope}
        \begin{scope}[shift={(-18,-1)},xscale=9,yscale=12]
            \draw[thick] (0,-1) to [out=15,in=180] (2,-1.3) to [out=0,in=270] (3.8,0)
            to [out=90,in=0] (2,1.3) to [out=180,in=340] (0,1);
        \end{scope}
        \bsphere{-4}{-4}{10}
        \fsphere{-4}{-4}{10}
        \sphere{10.5}{-.5}{3}
        \draw[white, line width=1mm]
        (-8,4.5) to [out=80,in=290] (-7.3,11)
        (-2.9,11) to [out=280, in=180] (-1,9) to [out=0,in=260] (.7,11)
        (5.2,10.7) to [out=250, in=180] (6,8) to [out=0,in=260] (7.3,10)
        (10.3,10) to [out=250, in=90] (9,6) to [out=270,in=110] (12,0)
        (9,0) to [out=115, in=0] (0,5) to [out=180,in=20] (-3.3,4.5)
        ;
        \draw[blue, thick]
        (-8,4.5) to [out=80,in=290] (-7.3,11)
        (-2.9,11) to [out=280, in=180] (-1,9) to [out=0,in=260] (.7,11)
        (5.2,10.7) to [out=250, in=180] (6,8) to [out=0,in=260] (7.3,10)
        (10.3,10) to [out=250, in=90] (9,6) to [out=270,in=110] (12,0)
        (9,0) to [out=115, in=0] (0,5) to [out=180,in=20] (-3.3,4.5)
        ;
        \torushole{2}{7}{2}{thick, blue}
        \hopflink{10.5}{0}{1}{red,very thick}
        \begin{scope}[shift={(-6.9,4.5)},xscale=2.3,yscale=.4]
            \draw[red, very thick]
            (-.5,0) to [out=90,in=180] (.5,1) to [out=0,in=90] (1.5,0) to [out=270,in=0] (.5,-1) to [out=180,in=270] (-.5,0);
        \end{scope}
        \trefoil{3}{9}{1.5}{red,very thick}
        \hopflink{-5}{11}{1.5}{red,very thick}
        \unknot{9}{10}{1.5}{red,very thick}
        \node at (8,-8) {\small$F_1$};
        \node at (-8,3.4) {\tiny$v_1$};
        \node at (10.5,-1.5) {\tiny$v_k$};
        \end{tikzpicture}
\quad \sim \quad
\begin{tikzpicture}[anchorbase,scale=#1]
    \begin{scope}[shift={(-18,-1)},xscale=3,yscale=12]
        \draw[dashed] (0,1) to [out=0,in=90] (1,0) to [out=270,in=0] (0,-1)
        (0,1) to [out=180,in=90] (-1,0) to [out=270,in=180] (0,-1);
    \end{scope}
    \begin{scope}[shift={(-18,-1)},xscale=9,yscale=12]
        \draw[thick] (0,-1) to [out=15,in=180] (2,-1.3) to [out=0,in=270] (3.8,0)
        to [out=90,in=0] (2,1.3) to [out=180,in=340] (0,1);
    \end{scope}
    \bgsphere{-4}{-4}{10}
    \fgsphere{-4}{-4}{10}
    \sphere{1}{-5}{3}
    \sphere{10.5}{-.5}{3}
    \sphere{-8}{.6}{3}
    \draw[white, line width=1mm]
    (-9.5,.7) to [out=80,in=290] (-7.3,11)
    (-2.9,11) to [out=280, in=180] (-1,9) to [out=0,in=260] (.7,11)
    (5.2,10.7) to [out=250, in=180] (6,8) to [out=0,in=260] (7.3,10)
    (10.3,10) to [out=250, in=90] (9,6) to [out=270,in=110] (12,0)
    (9,0) to [out=115, in=0] (0,5) to [out=180,in=70] (-6.5,.7)
    (3.1,-4.7) to [out=75, in=0] (-.5,3) to [out=180,in=110] (-.8,-5.2)
    (1.5,-4.4) to [out=75, in=0] (-.1,.5) to [out=180,in=110] (.7,-5.2)
    ;
    \draw[blue, thick]
    (-9.5,.7) to [out=80,in=290] (-7.3,11)
    (-2.9,11) to [out=280, in=180] (-1,9) to [out=0,in=260] (.7,11)
    (5.2,10.7) to [out=250, in=180] (6,8) to [out=0,in=260] (7.3,10)
    (10.3,10) to [out=250, in=90] (9,6) to [out=270,in=110] (12,0)
    (9,0) to [out=115, in=0] (0,5) to [out=180,in=70] (-6.5,.7)
    ;
    \draw[green, thick]
    (3.1,-4.7) to [out=75, in=0] (-.5,3) to [out=180,in=110] (-.8,-5.2)
    (1.5,-4.4) to [out=75, in=0] (-.1,.5) to [out=180,in=110] (.7,-5.2)
    ;
    \torushole{2}{7}{2}{thick, blue}
    \torushole{-.3}{1.5}{2}{thick, green}
    \torus{-5}{-4}{2.3}{thick, green}
    \trefoil{-8}{1.5}{1}{red,very thick}
    \hopflink{10.5}{0}{1}{red,very thick}
    \unknot{0.5}{-5}{.75}{red,very thick}
    \unknot{3}{-4.5}{.75}{red,very thick}
    \begin{scope}[shift={(-6.9,4.5)},xscale=2.3,yscale=.4]
        \draw[red, very thick, opacity=.5]
        (-.5,0) to [out=90,in=180] (.5,1) to [out=0,in=90] (1.5,0) to [out=270,in=0] (.5,-1) to [out=180,in=270] (-.5,0);
    \end{scope}
    \trefoil{3}{9}{1.5}{red,very thick}
    \hopflink{-5}{11}{1.5}{red,very thick}
    \unknot{9}{10}{1.5}{red,very thick}
    \node at (8,-8) {\small$F_2$};
    \node at (-8,-8) {\small$F_3$};
    \node at (-8,-.5) {\tiny$v_i$};
    \node at (1.6,-6) {\tiny$v_j$};
    \node at (10.5,-1.5) {\tiny$v_k$};
    \end{tikzpicture}
    }

\theoremstyle{plain}
\newtheorem{Thm}{Theorem}[section]
\newtheorem{Lem}[Thm]{Lemma}
\newtheorem{Cor}[Thm]{Corollary}
\newtheorem{Prop}[Thm]{Proposition}

\newtheorem{Que}[Thm]{Question}

\theoremstyle{definition}
\newtheorem{Def}[Thm]{Definition}

\theoremstyle{remark}
\newtheorem{Rmk}[Thm]{Remark}

\newcommand{\bigdownarrow}[1]{\Big\downarrow\mathrlap{\,\scriptstyle #1}}
\newcommand{\biguparrow}[1]{\Big\uparrow\mathrlap{\,\scriptstyle #1}}

\newcommand{\R}{\mathbb{R}}
\newcommand{\Z}{\mathbb{Z}}
\newcommand{\N}{\mathbb{N}}

\def\cKhRNa{\underline{\operatorname{KhR}}_{N, \alpha}}

\newcommand{\ru}{to [out=0,in=270]}
\newcommand{\rd}{to [out=0,in=90]}
\newcommand{\ur}{to [out=90,in=180]}

\newcommand{\dr}{to [out=270,in=180]}

\begin{document}

\title{Spectral Sequences in Lasagna Theory}


\author{Amey Joshi}

\subjclass[2010]{Primary }

\address{Department of Mathematics, Michigan State University, East Lansing, MI 48824, USA}
\email{joshiam3@msu.edu}

\dedicatory{}

\

\maketitle
\begin{abstract}
    We use Khovanov-Floer theories and their spectral sequences to construct differentials on the corresponding skein-lasagna modules. Then we show that for 2-handlebodies, a spectral sequence of TQFTs coming from a Khovanov-Floer theory gives a spectral sequence of lasagna modules under the aforementioned differential. As a corollary, we recover the rank inequality between the Khovanov and Lee lasagna modules.
\end{abstract}


\section{Background and Introduction}
Following Khovanov's categorification of the Jones polynomial in \cite{khovanov1999categorificationjonespolynomial}, a new world of TQFTs and manifold invariants opened. Using Khovanov homology, Rasmussen defined the celebrated s-invariant in \cite{rasmussen2004khovanovhomologyslicegenus}. The s-invariant was used to provide the first-ever proof of the existence of exotic 4-manifolds that did not use gauge theory.
Skein-lasagna modules were first defined in \cite{Morrison_2022}. They used a functorial fix of the Khovanov homology to define an invariant of four-manifolds. These modules were significantly larger invariants than anything defined earlier. However, in \cite{rw24}, the authors used these modules to define a generalization of Rasmussen's s-invariant for links in the boundary of an arbitrary four-manifold. The authors used this new invariant to distinguish exotic structures. The idea was to extract a smaller and easier-to-compute invariant out of the skein-lasagna modules, which they achieved by using a Lee-lasagna module.
\par
Suppose $X$ is a four-manifold and $L\subset \partial X $, a link. Given a link invariant $Z$ satisfying certain axioms (referred to as``TQFTs" in the rest of the paper), one can define a corresponding lasagna module $S^{Z}(X, L)$ that is an invariant of the pair $(X,L)$.
These modules have a natural grading coming from the second cohomology:
\begin{equation}
    S^{Z}(X, L)= \bigoplus_{\alpha \in H_{2}(X,L)} S^{Z}_{\alpha}(X, L)
\end{equation}

In \cite{mn22}, the authors gave a formula to compute the skein lasagna modules for 2-handlebodies. To explore further possibilities, it is natural to analyze how lasagna modules depend on their input TQFT, especially under small perturbations of the differential. The formula for the skein lasagna modules of 2-handlebodies proved in \cite{mn22}, resembles a filtered colimit. Filtered colimits are known to commute with homology; hence, the following question arises:

\begin{Que}
    Does the inequality between the ranks of the Khovanov skein lasagna and the Lee lasagna modules in \cite{rw24}:
    \begin{equation}
        rank(S^{KhR_2}_{\alpha,q,h}(X,L,\mathbb{Q}))\geq  rank(gr_{q}S^{KhR_{Lee}}_{\alpha,h}(X,L,\mathbb{Q})) 
    \end{equation}
    (for a 2 handlebody $(X,L)$ and $q \in \mathbb{Z}$) come from a spectral sequence? \footnote{Here $q$ and $h$ are the quantum and homological gradings, respectively, coming from Khovanov homology.}
    \par In particular, is there a spectral sequence whose second page is the Khovanov-Rozansky skein lasagna that converges to the Lee lasagna module?
\end{Que}
One can also observe that there are multiple spectral sequences that relate two different TQFTs; many such examples are listed in \cite{b18}. Such spectral sequences are called Khovanov-Floer theories, since these spectral sequences start at a Khovanov-type homology and converge to some version of  Floer homology. Hence, we also ask:
\begin{Que} 
Does a spectral sequence of TQFTs give a spectral sequence of the corresponding lasagna modules? \footnote{   Note that we already know that this is not true in general. For example, there are examples of manifolds with one handle in which the Lee lasagna is larger than the Khovanov lasagna module.}
\end{Que}

In this paper, we give partial answers to both of these questions. 
\begin{Thm} \label{thm: main}
     Let $X$ be a 2-handlebody, formed by attaching 2-handles to a framed link $K\subset \partial B^4$. Let $L$ be a link disjoint from $K$. Let $\{ E_i, d_i \}$ be a spectral sequence of TQFTs coming from a Khovanov-Floer theory. Then there is a differential on the corresponding lasagna modules so that:
    
       $$  H_{*}(S^{E^{n}}(X,L))\simeq S^{E^{n+1}}(X,L) $$
   Such a spectral sequence is an invariant of the pair $(X,L)$. Henceforth, we will denote such a spectral sequence as $LSS(X,L,\mathbb{F})$, or just $LSS$ (short for lasagna spectral sequence) when the underlying pair $(X,L)$ is understood. 
    
\end{Thm}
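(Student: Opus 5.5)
The plan is to use the Manolescu–Neithalath formula from \cite{mn22}. For a 2-handlebody $X$ obtained from $K=K_1\cup\dots\cup K_m\subset S^3$, that formula expresses each graded piece $S^{Z}_{\alpha}(X,L)$ as a colimit over the poset of multi-indices $(r^{+},r^{-})\in\N^{m}\times\N^{m}$. The terms of this colimit are the groups $Z(K(r^{+},r^{-})\cup L)$, where each $K_i$ is replaced by $r_i^{+}+r_i^{-}$ parallel framed copies with the indicated orientations. The colimit is then quotiented (equivalently, coinvariants are taken) by the action of the symmetric groups permuting parallel strands, and the transition maps are induced by the cobordism that adds a pair of oppositely oriented parallel strands bounding an annulus.

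First, for a TQFT $E^{n}$ appearing as a page of the spectral sequence of a Khovanov--Floer theory, I will check that this formula applies verbatim with $Z=E^{n}$. This should follow because each page is itself functorial for link cobordisms, which is part of the Khovanov--Floer axioms. The dotted/annular structure and the sweep-around property required in \cite{Morrison_2022} should follow on each page from their validity on $E^{1}=\mathit{Kh}$ together with naturality of the spectral sequence.

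Second, I define the differential. Each $d_n$ is a natural transformation $E^{n}\to E^{n}$ of link invariants that commutes with the maps induced by cobordisms, up to sign and grading shift. I therefore take the colimit of the $d_n$ on the diagram defining $S^{E^{n}}_{\alpha}(X,L)$. Since the $d_n$ commute with the transition maps and with the permutation actions, they descend to an endomorphism $D_n$ of the colimit with $D_n^{2}=0$.

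Third, I compute $H_*(S^{E^{n}}_\alpha, D_n)$. The indexing poset is directed, so the colimit is filtered. Filtered colimits of vector spaces over $\mathbb{F}$ are exact and commute with homology, so the homology of the colimit is the colimit of $H_*(E^{n}(K(r^+,r^-)\cup L),d_n)=E^{n+1}(K(r^+,r^-)\cup L)$, with the induced transition maps. These induced maps are precisely the cobordism maps of $E^{n+1}$, again by naturality of the spectral sequence.

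The main obstacle is the symmetric group quotient. Taking coinvariants is only right exact in general, so the argument needs it to commute with homology. I will handle this in one of two ways. The first is to work over a field of characteristic $0$, where coinvariants are a direct summand (the image of the averaging idempotent) and hence exact. The second, more robust, is to rewrite the Manolescu–Neithalath colimit as a colimit over a category that already incorporates the permutations, making the whole construction a single filtered colimit, and then verify that this category is filtered so the exactness argument applies. Finally, invariance of $LSS(X,L,\mathbb{F})$ follows from invariance of each page's lasagna module, which is proved in \cite{Morrison_2022}, together with naturality of $D_n$ under the diffeomorphism-induced isomorphisms.
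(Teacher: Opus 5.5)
Your chain of isomorphisms matches the paper's: filtered colimits commute with homology, the $\mathfrak{S}$-coinvariants in characteristic zero are the image of the averaging idempotent, and $H_*(E^n(K(r)),d_n)=E^{n+1}(K(r))$ with the induced maps being the $E^{n+1}$ cobordism maps. But the step everything rests on is assumed rather than proved. The Manolescu--Neithalath formula is not a filtered colimit of coinvariants along a single annulus map. As recalled in Proposition~\ref{prop:2_hdby}, its relations are $\beta_i(b)v\sim v$, $\psi^{[N-1]}_i(v)\sim v$, \emph{and} $\psi^{[m]}_i(v)\sim 0$ for $m<N-1$. For $N=2$ the transition map is the \emph{dotted} annulus, and in addition the image of the undotted annulus is killed, because an undotted sphere evaluates to $0$.

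That last family is neither a transition-map relation nor a coinvariant relation for the braid actions appearing there. Quotienting by the image of a chain map has exactly the right-exactness problem you flagged for coinvariants, and you never address it. So ``the indexing poset is directed, so the colimit is filtered'' does not apply to the object you actually have. Handling this is the job of the paper's Section~3 machinery and Proposition~\ref{thm:2hdlbdy}. Using $\sigma=1-Z(i)\circ Z^r(i)$ and neck-cutting on every page, the paper shows that $\tfrac12(1+\sigma)$, composed with the annulus obtained by pushing a capped unknot over a handle, equals the dotted annulus. Since $Z^r(i)\circ Z(i)=2$, the map $\tfrac12(1+\sigma)=1-\tfrac12 Z(i)Z^r(i)$ is the idempotent complementary to projection onto the image of the undotted annulus. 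It is realized as a convex combination of morphisms in $2HC^{conv}$, so the killing relation is absorbed into coinvariants, and only then does $S^{E^n}_\alpha(X,L)\cong\mathrm{Fcolim}_r\,\mathrm{colim}_{\mathfrak{S}}E^n(K(r))$ hold. You need this idea, or another splitting of the undotted-annulus images compatible with the transition maps and permutation actions, before your exactness step goes through.

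Your invariance argument also does not work as stated. Your $D_n$ is defined on a presentation that depends on the Kirby diagram. Appealing to ``naturality under diffeomorphism-induced isomorphisms'' does not help, because a diffeomorphism or change of handle decomposition does not carry the cabled skeins $\Sigma_{K(r)}$ of one diagram to those of another. Nothing you say compares the differentials coming from two presentations. The paper avoids this by defining $\overline{d_n}$ directly on $\mathrm{colim}_{[\Sigma]\in C(X,L)}E^n([\Sigma])$ via the universal property. This only requires $d_n$ to commute with every cobordism map, and it does so on the nose, since those maps come from degree-zero filtered chain maps; a genuine sign, as your ``up to sign'' allows, would stop $D_n$ from descending. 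The paper then checks that the identification with the 2-handlebody colimit intertwines the two differentials. In your setup the check is short, because the Manolescu--Neithalath map sends $v\in E^n(K(r))$ to $\pi_{\Sigma_{K(r)}}(v)$ and $\overline{d_n}\circ\pi_{\Sigma}=\pi_{\Sigma}\circ d_n$. Once that check is made, independence of the handle decomposition is automatic.
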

\subsection{Outline: } We start by discussing the basics of lasagna modules in Section 2. In Section 3, we will discuss some algebraic preliminaries, as well as define some new categories that behave well with respect to colimits. In Section 4, we define the basics as well as the necessary modifications of the Khovanov-Floer theories. After pointing out that Khovanov-Floer theories satisfy the right axioms, we define lasagna modules for them and discuss their properties that will later be used to construct the $LSS$. In Section 5, we define a differential on the lasagna modules corresponding to each page of the spectral sequence and prove the main theorem. Finally, in Section 6, we discuss a stronger version of our result where we have a notion of filtered chain complexes arising from a quantum degree. In the last section, we first show how to obtain the rank inequality in \cite{rw24} as a corollary of our result. Then we also prove a connect-sum formula and provide some corollaries of the $LSS$ construction.
\subsection{Idea of the Proof}

To summarize, we prove the existence of the spectral sequence in Theorem \ref{thm: main} as follows:

\begin{align*} 
    H_{*}(S_{\alpha \in H_{2}(X,L) } ^{E^{n}}(X,L))\cong  H_{*}( {\text{Fcolim}}_{r\in \mathbb{N}^{k}} {\text{colim}}_{\sigma \in \mathfrak{S}_{\|\alpha \|+2r}} E^{n}(K(r)))  \\  \cong  F{\text{colim}}_{r\in \mathbb{N}^{k}} H_{*}({\text{colim}}_{\sigma \in \mathfrak{S}_{\|\alpha \|+2r}} E^{n}(K(r)))\\ \cong  {\text{Fcolim}}_{r\in \mathbb{N}^{k}} {\text{colim}}_{\sigma \in \mathfrak{S}_{\|\alpha \|+2r}} H_{*}(E^{n}(K(r)))\\ \cong   {\text{Fcolim}}_{r\in \mathbb{N}^{k}} {\text{colim}}_{\sigma \in \mathfrak{S}_{\|\alpha \|+2r}} E^{n+1}(K(r)) \\ \cong  S_{\alpha \in H_{2}(X,L) } ^{E^{n+1}}(X,L)
\end{align*}
To prove the first step, we introduce some new algebraic techniques (Section 3) and accordingly reformulate skein lasagna modules as well as the 2-handlebody formula in terms of the aforementioned algebraic terms (Section 5). The remaining steps follow from rather straightforward computations in Section 5. Since the differential is constructed on a general skein lasagna module (rather than just a 2-handlebody), it follows that the construction of the spectral sequence is independent of the choice of a 2-handlebody structure.

So, simply by observing the``filtered colimit" in the 2-handlebody formula in \cite{mn22}, one might expect a rather straightforward proof of the existence of the $LSS$. However, the details are rather tedious. The key issues that arise are:

\paragraph{
\textbf{Field of characteristic zero :} $1+1$ being invertible is crucial to many proofs in \cite{rw24} and \cite{mn22}. To give an example of one of these instances, the 2-handlebody formula considers a colimit with respect to ``braid group action". 1+1 being invertible is crucial for this group action to commute with the differentials.}
\paragraph{
\textbf{Problems with existing Khovanov-Floer Theories : } The work done in \cite{b18,saltz2018strongkhovanovfloertheoriesfunctoriality} is over $\mathbb{Z}_2$; hence we need to use the right functorial version of the Khovanov Homology over characteristic zero, and choose the right axioms of Khovanov-Floer theories. Apart from that, these papers only work for TQFTs for links in $\mathbb{R}^3$. To be able to define lasagna modules, however, we need to work with TQFTs for links in $S^3$. }
\paragraph{
\textbf{Comparing the differentials : } 
We will be using the universal property of colimits to define differentials of the $LSS$. However, while doing so, we encounter some limitations of the 2-handlebody formula in \cite{mn22}. To be precise, the isomorphism between skein lasagna and cabled Khovanov homology is not ``functorial". That is, the isomorphism does not come from a functor between indexing categories that induces an isomorphism on the colimits. Most of the work in Section 3 is to fix this issue. Finally, we reformulate the 2-handlebody formula in Section 5 so that the differentials on both the cabled homology and the skein lasagna modules (naturally coming from the universal property of colimits) agree.}
\paragraph{
\textbf{Convergence page vs. $E^\infty$ page : } 
Note that at first sight it may look like our result implies a direct rank inequality for the Khovanov and the Lee lasagna modules. However, this is not true since the $E^\infty$ page of a KFT is not the same as the Lee homology as a TQFT. Observe that a cobordism from the empty link to the unknot given by a genus 3 surface gives a zero map on $E^\infty$, while it gives multiplication by 8 on the Lee TQFT. Hence, we need to use quantum degree to relate lasagna modules coming from these two similar TQFTs.
}

\paragraph{
\textbf{Acknowledgments: } 
The author is grateful to his PhD advisors, Matthew Hedden and Matthew Stoffregen, for many detailed and stimulating discussions, which greatly influenced this work. The author is especially indebted to Mike Willis for generous guidance and invaluable insight at every stage of the project. The author also thanks Qiuyu Ren for carefully pointing out a mistake in an earlier construction.
}
\section{Skein Modules}

We start by defining what we mean by a ``TQFT". Throughout this paper, we use the following convention:\footnote{In the classical literature, these axioms would describe a symmetric monoidal (3+1) TQFT.}

\begin{Def} \label{def:TQFT} 
   Let $\Gamma$ be a discrete group, and $R$ be a commutative ring, then a TQFT for links in an abstract $\mathbb{R}^3$ is a functor 
\[
\begin{Bmatrix}
\textrm{link embeddings in an oriented X $\cong$ } \mathbb{R}^3
\\
\textrm{link cobordisms (possibly dotted) in oriented }
Y \cong X \times I 
\\
\textrm{ up to isotopy rel } \partial
\end{Bmatrix}
\xrightarrow{Z}
\begin{Bmatrix}
\ R\textrm{-modules} 
\\
\ ( \textrm{possibly} \ \mathbb{Z}\textrm{-graded, } \Gamma\textrm{-graded/filtered }  
\\
\   \ \Gamma\textrm{-hom./filt.} R\textrm{-morphisms})
\end{Bmatrix}
\]
that satisfies \footnote{TQFTs that satisfy the second property are generally referred to as "lax monoidal".} 
 \begin{align*}
    Z(\emptyset) & \cong R \\
    Z(L_{1}\sqcup L_{2})  &\cong  Z(L_{1})\bigotimes Z(L_{2})  \\
 \end{align*} 
\end{Def}

There is no canonical notion of a disjoint union for links in $S^3$. Hence, we define TQFTs for links in $S^3$ as follows:

\begin{Def}\label{def:s3tqft}
Let $\Gamma$ be a discrete group, $R$ be a commutative ring, then a TQFT for links in an abstract $S^3$ is a functor 
 \[
\begin{Bmatrix}
\textrm{link embeddings in an oriented }\  S\cong S^3
\\
\textrm{link cobordisms (possibly dotted) in oriented }
Y \cong S \times I 
\\
\textrm{ up to isotopy rel } \partial
\end{Bmatrix}
\xrightarrow{Z}
\begin{Bmatrix}
\ R\textrm{-modules} 
\\
\ ( \textrm{possibly} \ Z\textrm{-graded, } \Gamma\textrm{-graded/filtered }  
\\
\   \ Z\textrm{-pres., } \Gamma\textrm{-hom./filt.} R\textrm{-morphisms})
\end{Bmatrix}
\]

\end{Def}

Throughout this paper, we will be dealing with pairs $(X,L)$, so that $X$ is a four manifold with a link $L\subset \partial X$.
Following \cite{rw24}, define \textit{category of skeins} in $(X,L)$, denoted $\mathcal{C}(X,L)$, as follows:
\begin{Def}

 The objects of $C(X,L)$ consist of all 2-dimensional skeins (possibly dotted)\footnote{The original definition in \cite{rw24} does not include dotted surfaces but it is easy to see that the definition does not change even if dots are included} in $X$ rel $L$ i.e. \begin{equation}\label{eq:skein} \Sigma\subset X\backslash{\sqcup_{i=1}^kint(B_i)},\text{ with }\partial\Sigma|_{\partial X}=L 
\end{equation} \footnote{Since we are working in the smooth category, the tangent vector to $\Sigma$ at any point on the link $L_{i}$ is normal to $\partial B_i$}

Consider $\Sigma'$, another object with input balls $B_j'$, $j=1,\cdots,k'$. If $\sqcup B_i\subset\sqcup B_j'$ and each $B_j'$ either coincides with some $B_i$ or does not intersect any $B_i$ on its boundary, then the set of morphisms from $\Sigma$ to $\Sigma'$ consists of pairs of an isotopy class of surfaces $S\subset\sqcup B_j'\backslash\sqcup\,int(B_i)$ rel boundary with $S|_{\partial B_i}=\Sigma|_{\partial B_i}$, $S|_{\partial B_j'}=\Sigma'|_{\partial B_j'}$, and an isotopy class of isotopies from $S\cup\Sigma'$ to $\Sigma$ rel boundary. We will suppress the isotopy and write $[S]\colon\Sigma\to\Sigma'$ for this morphism. If the input balls $B_i,B_j'$ do not satisfy the condition above, there is no morphism from $\Sigma$ to $\Sigma'$. These morphisms are illustrated in the Figure \ref{fig:lasagnafillingequiv}, which we shall describe later. For now, one should note that the surface in green represents a morphism between two objects of $C(X,L)$

\end{Def}
Note that while working over a field of characteristic zero, one can interpret dotted cobordisms by the following relation: 
\begin{equation}
	\label{eq:dotdef}
\begin{tikzpicture} [fill opacity=0.2,anchorbase, scale=.375]
	\path[fill=blue,opacity=.2] (1,4) arc[start angle=0, end angle=180,x radius=1,y radius=.5] 
		to [out=270,in=180] (0,2.5) to [out=0,in=270] (1,4);
	\path[fill=blue,opacity=.2] (1,4) arc[start angle=360, end angle=180,x radius=1,y radius=.5] 
		to [out=270,in=180] (0,2.5) to [out=0,in=270] (1,4);
	\draw[very thick] (0,4) ellipse (1 and 0.5);
	\draw[very thick] (-1,4) to [out=270,in=180] (0,2.5) to [out=0,in=270] (1,4);
	\node[opacity=1] at (0,3) {\footnotesize$\bullet$};
\end{tikzpicture}
:=
\frac{1}{2}
\begin{tikzpicture}[scale=.5,anchorbase]
\begin{scope}
    \clip (-.65,4) arc[start angle=180, end angle=360,x radius=.65,y radius=.25]
    	to (1.25,4) to (1.25,.4) to (-1.25,.4) to  (-1.25,4) to (-.65,4);
\fill[blue,opacity=.3] (0,2.5) ellipse (1 and 2);
\draw[very thick] (0,2.5) ellipse (1 and 2);
\fill[white] (0,3) to [out=300,in=60] (0,2) to [out=120,in=240] (0,3);
\draw[very thick] (0,3) to [out=300,in=60] (0,2);
\draw[very thick] (0.1,1.8) to [out=125,in=235] (0.1,3.2);
\end{scope}
\fill[blue,opacity=.2] (0,4) ellipse (.65 and .25);
\draw[very thick] (0,4) ellipse (.66 and .25);
\end{tikzpicture} \, .
\end{equation}

Before we go on to define skein lasagna modules for a general TQFT, we note that most of the TQFTs, including Khovanov and Lee homologies, are defined for links in a fixed $S^3$. that is, a space diffeomorphic to $$\{(x,y,z,w)\subset \mathbb{R}^{4} \| x^2 + y^2 + z^2 + w^2= 1 \}$$.
Whenever the manifold $S$ in terms of \ref{def:s3tqft} is this standard $S^3$, we refer to a TQFT as a ``TQFT of the standard $S^3$."

\begin{Def} \label{def:TQFTextends}
    We say that a TQFT $Z$ on the standard $S^3$ extends to a TQFT $\overline{Z}$ of abstract $S^3$ if for every orientation-preserving diffeomorphism $\phi: Y \rightarrow S^3$ and a link $L\subset Y$, there is a corresponding isomorphism $\phi^{Z}(L): \overline{Z}(L) \rightarrow Z(\phi(L)) $
    and for every diffeomorphism $\Phi: Y\times I \rightarrow S^3 \times I $ and every cobordism $\Sigma \subset Y\times I$, the following diagram commutes:
\\
\[
\begin{tikzcd}
    \overline{Z}(L_1) \arrow{r}{\overline{Z}(\Sigma)} \arrow{d}{{\Phi |}^{Z}_{Y\times\{ 0\} }} &  \overline{Z}(L_2) \arrow{d}{{\Phi |}^{Z}_{Y\times \{ 1 \} }} \\
    Z({\Phi |}_{Y\times\{ 0\} }(L_1)) \arrow{r}{Z(\Sigma)} & Z({\Phi |}_{Y\times\{ 0\} }(L_2)) \\
\end{tikzcd}
\]
\end{Def}

From now on, we only work over a field $\mathbb{k}$ of characteristic zero.
\begin{Def}
    For a given TQFT $Z$ (for links in an abstract $S^3$), define a functor that assigns to every $\Sigma\in C(X,L)$, the module $\bigotimes_{i=1}^{k} Z(L_i)$ \footnote{To avoid confusion, recall that $L_i \subset \partial B_i$ are the links in the input balls, while $L\subset \partial X$ is the fixed boundary link such that for any $\Sigma \in C(X,L) $, $\Sigma \ \cap \ \partial X= L$}. We define the $Z$-lasagna module denoted by $S^{Z}(X,L,\mathbb{k})$ to be 
    \begin{equation}\label{eq:colim}
\mathcal{S}^Z(X;L,\mathbb{k}):=\mathrm{colim}_{[\Sigma ] \in C(X,L)} Z([\Sigma]).
\end{equation}\footnote{Note that, to evaluate this colimit, we need to realize the cobordism $\Sigma$ as a surface embedded in an abstract $S^3 \times I$. To do this, we take connect sum of all the input balls. The Sweep-around move assures that this construction is well-defined and independent of where the connect-sum operation is performed. For details, refer to \cite{rw24}.}
\begin{Rmk}
    This colimit is computed in the category of vector spaces over the field $\mathbb{k}$. 
\end{Rmk}

\end{Def}
Observe that a morphism exists between two skeins only when they represent the same element of $H_{2}(X,L)$ (after collapsing the input balls). Hence, as pointed out earlier, one gets a well-defined grading:
\begin{equation}
    S^{Z}(X, L, \mathbb{k})= \bigoplus_{\alpha \in H_{2}(X,L)} S^{Z}_{\alpha}(X, L, \mathbb{k})
\end{equation}

Note that even though this definition of lasagna modules is much more useful and natural to us, in order to visualize elements of this convoluted module, one should follow the definition in \cite{mn22} and regard the elements as skeins with input links labeled by an element of the corresponding TQFT. That is, to visualize elements of this module, we will consider equivalence classes of pairs $(\Sigma, \nu)$ (denoted by $[\Sigma, \nu]$) where the input links $\{L_i\}$ of $\Sigma$ are colored by $\nu = \otimes v_i (\text{where } v_i \in Z(L_i) )$.

For each element of $C(X,L)$, we consider the connect sum of the input balls as demonstrated in the figure \ref{fig:connectsum} reproduced from \cite{morrison2024invariantssurfacessmooth4manifolds}. Hence, we get a cobordism between two links inside $\#_j  B_j^{'} - \#_i B_i^{\circ} \cong S^{3} \times I $, thus giving us a well-defined functor.

\begin{figure}[htbp]\label{fig:connectsum} 
\centering
\[ \begin{tikzpicture}[anchorbase,scale=.2]
    \bsphere{0}{2}{14}
    \sphere{-4.3}{7.7}{1}
    \sphere{0}{0}{3}
    \sphere{7}{0}{3}
    \sphere{-7}{0}{3}
    \trefoil{.5}{1}{1}{green,very thick}
    \node at (0.4,-.7) {\tiny$v_2$};
    \unknot{-7}{1}{.75}{red,very thick}
    \node at (-7.5,-.7) {\tiny$v_1$};
    \unknot{6.75}{1}{.75}{blue,very thick}
    \unknot{8.75}{1}{.75}{blue,very thick}
    \node at (7.25,-.7) {\tiny$v_3$};
    \draw[red, thick] (-8.3,.85) to [out=80,in=290] (-7.3,11)
    (-7,.85) to [out=80,in=290] (-4.4,11);
    \draw[blue, thick]
    (5.4,.85) to [out=100,in=270] (-5.7,11)
    (6.9,.85) to [out=100,in=270] (-2.7,11)
    (7.4,.85) to [out=90,in=270] (7.4,9)
    (8.8,.85) to [out=90,in=270] (10.3,9);
    \draw[green, thick]
    (0,7.4) to [out=80,in=260] (.7,11)
    (2.5,6.1) to [out=80, in=260] (5.2,10.7)
    (-1,0.2) to [out=80,in=260] (-.3,5.3)
    (2,0.2) to [out=80, in=260] (2,4.2)
    ;
    \trefoil{3}{9}{1.5}{green,very thick}
    \hopflinkcol{-5}{11}{1.5}{blue,very thick}{red,very thick}
    \unknot{9}{9}{1.5}{blue,very thick}
    \fsphere{0}{2}{14}
    \end{tikzpicture}\quad \xrightarrow{\text{Connect Sum realization}} \quad
    \begin{tikzpicture}[anchorbase,scale=.2]
      \bsphere{0}{2}{14}
      \sphere{-4.3}{7.7}{1}
       \node at (0.4,-.8) {$\leftarrow \tiny v_1 \otimes v_2 \otimes v_3 \rightarrow$};
      \draw[white, line width=1mm] (-10,0) \dr (-7,-2) to (7,-2) \ru (10,0);
      \draw  (-10,0) \dr (-7,-2) to (7,-2) \ru (10,0);
      \draw[thick] 
      (-10,0) \dr (-7,-3) to (7,-3) \ru (10,0) 
      (-10,0) \ur (-7,3)to (7,3) \rd (10,0);
      \draw[dashed] (-10,0) \ur (-7,2)to (7,2) \rd (10,0);
      \trefoil{.5}{1}{1}{green,very thick}
      \unknot{-7}{1}{.75}{red,very thick}
      \unknot{6.75}{1}{.75}{blue,very thick}
      \unknot{8.75}{1}{.75}{blue,very thick}
      \draw[red, thick] (-8.3,.85) to [out=80,in=290] (-7.3,11)
      (-7,.85) to [out=80,in=290] (-4.4,11);
      \draw[blue, thick]
      (5.4,.85) to [out=100,in=270] (-5.7,11)
      (6.9,.85) to [out=100,in=270] (-2.7,11)
      (7.4,.85) to [out=90,in=270] (7.4,9)
      (8.8,.85) to [out=90,in=270] (10.3,9);
      \draw[green, thick]
      (0,7.4) to [out=80,in=260] (.7,11)
      (2.5,6.1) to [out=80, in=260] (5.2,10.7)
      (-1,0.2) to [out=80,in=260] (-.3,5.3)
      (2,0.2) to [out=80, in=260] (2,4.2)
      ;
      \trefoil{3}{9}{1.5}{green,very thick}
      \hopflinkcol{-5}{11}{1.5}{blue,very thick}{red,very thick}
      \unknot{9}{9}{1.5}{blue,very thick}
      \fsphere{0}{2}{14}
      \end{tikzpicture}  
\]

\caption{Connect Sum of the input balls}
\label{fig:connectsum}
\end{figure}

Note that equivalently, one can think of  the skein lasagna module as an equivalence class of pairs $[\Sigma,\nu]$, where $\Sigma$ is a skein with boundary links $L_i$, and $\nu \in Z(\bigotimes L_i)$. The equivalence relation is given in the Figure \ref{fig:lasagnafillingequiv} reproduced from \cite{Morrison_2022}

\begin{figure}[ht]
	\[
	\lasagnafillingfigure{.20}
	\]
	\caption{}
	\label{fig:lasagnafillingequiv}
\end{figure}

As we will see in the fourth section, to prove our main result, it is much more convenient to deal with a slight modification of the category $C(X,L)$:
\begin{Def}\label{def:newcategory}
   We define the category $\overline{C(X,L)}$ as follows:
   \begin{itemize}
       \item \textbf{Objects:} $$\text{Ob}(\overline{C(X,L)}) \equiv \text{Ob}(C(X,L))$$ That is, this new category has the same objects as $C(X,L)$ (Hence, we call these objects skeins)
       \item For any two skeins $[\Sigma]$ and $[\Sigma]^{'}$ we have: $$\text{Hom}_{C(X,L)}([\Sigma],[\Sigma]^{'}) \subset \text{Hom}_{\overline{C(X,L)}}([\Sigma],[\Sigma]^{'})$$ That is, the new category has all the morphisms that were present in $C(X,L)$.
       \item We describe a new class of morphisms that did not previously exist in $C(X,L)$:
       \\ Suppose $[\Sigma]$ is a skein with input balls $\sqcup_{i=1}^{n}B_{i}$ and $[\Sigma^{'}]$ is a skein with input balls $\sqcup_{j=1}^{k}B_{j}^{'}\sqcup_{i=2}^{n}B_i$ such that:
       \begin{enumerate}
           \item $\sqcup_{j=1}^{k}B_{j}^{'} \subset B_1$
           \item $\Sigma^{'}|_{X-B_1}=\Sigma$
           \item Suppose $S=\Sigma^{'}\cap B_1$ and there exists a map $$F:B_1-\sqcup_{j=1}^{k}B_{j}^{'} \longrightarrow S^3 \times I $$\footnote{A more accurate description of the map $F$ is as follows: First choose paths between the input balls $\gamma_j$ in $X$ going from $B^{'}_j$ to $B^{'}_{j+1}$ disjoint from the surface $\Sigma$. Choose a tubular neighborhood of each $\gamma_j$ disjoint from the input balls as well as the skein. The map $F$ we are looking for is then given by collapsing the tubular neighborhood of each $\gamma_j$ followed by a diffeomorphism. Hence giving us a map of the form $$F:B_1-\sqcup_{j=1}^{k}B_{j}^{'} \xrightarrow{\text{collapsing } N(\gamma_i) } B_1-\#_{j=1}^{k}B_{j}^{'}  \xrightarrow{\text{diffeomorphism}} S^3 \times I $$   } given by connect sum quotient of the input balls $\{   B_{j}\}_{j=1}^{k}$ followed by a diffeomorphism onto the standard $S^3 \times I$; such that $F(S)=\sqcup_{i} \ (  L_{i}\times I)$\footnote{This condition is trying to say that the surface $S$ looks like a disjoint union of identity cobordisms on boundary links}
           
       \end{enumerate}
    For any such pairs $[\Sigma]$ and $[\Sigma^{'}]$, we define a new class of morphisms in  the category $\overline{C(X,L)}$ denoted by $G_{\overline{C(X,L)}}$. Maps in the class $G_{\overline{C(X,L)}}$ are denoted as follows: $$g_{\overline{C(X,L)}}:[\Sigma]\rightarrow [\Sigma^{'}]$$ As depicted in the Figure \ref{fig:newcatfg}, this map $g_{\overline{C(X,L)}}$ is supposed to act as an ``inverse" to the map $f_{\overline{C(X,L)}}$  that already existed in $C(X,L)$. 
      \item $\text{Hom}_{\overline{C(X,L)}}$ is the formal closure under composition of $\text{Hom}_{C(X,L)} \cup G_{\overline{C(X,L)}}$. That is every morphism in $\overline{C(X,L)}$ is of the form $$f_{1}\circ g_{1} \circ \dots \circ g_{n} \circ f_{n+1}$$ where each $f_{i} \in \text{Hom}_{C(X,L)} \subset \text{Hom}_{\overline{C(X,L)}}$ and each $g_{j} \in G_{\overline{C(X,L)}} $.
       
   \end{itemize}
\end{Def} 
\begin{figure}[htbp]
\centering

\[
    \begin{tikzpicture}[anchorbase, scale=0.5]
    \begin{scope}[xshift=-1cm]

    \sphere{0}{0}{4}
    
    \sphere{-1}{0}{0.5}
    \sphere{1}{0}{0.5}
    \unknot{-1}{3}{0.4}{very thick, blue}
    \trefoil{1.5}{7}{0.4}{very thick, green}
     
    \unknot{-0.2}{0.2}{0.2}{very thick, blue}
    \trefoil{1.2}{5}{0.2}{very thick, green}
    \draw[thick] (-2.2,3) to [in=90,out=-90] (-1.35,1.5);
     \draw[thick] (-1.38,3) to [in=90,out=-90] (-0.77,1.5);
     \draw[thick] (1.1,3) to [in=90,out=-90] (0.77,1.3);
     \draw[thick] (1.8,3) to [in=90,out=-90] (1.5,1.3);
     \draw[thick]    (-1.35,1.5)    to [in=90,out=-90] (-1.31,0.2);
    \draw[thick]    (-0.77,1.5)    to [in=90,out=-90]   (-0.9,0.2);
    \draw[thick]     (0.77,1.3)   to [in=90,out=-90]      (0.93,0.2);
     \draw[thick]     (1.5,1.3)   to [in=90,out=-90]     (1.4,0.2);
     \end{scope}

 \end{tikzpicture} \quad \equiv \quad
 \begin{tikzpicture}[anchorbase, scale=0.5]
    \begin{scope}[xshift=1cm]
    \sphere{0}{0}{4}
    \nsphere{0}{6}{2}
    \sphere{-1}{0}{0.5}
    \sphere{1}{0}{0.5}
    \unknot{-1}{3}{0.4}{very thick, blue}
    \trefoil{1.5}{7}{0.4}{very thick, green}
     \unknot{-0.2}{1.5}{0.3}{very thick, blue}
    \trefoil{1.2}{5.5}{0.3}{very thick, green}
    \unknot{-0.2}{0.2}{0.2}{very thick, blue}
    \trefoil{1.2}{5}{0.2}{very thick, green}
    \draw[thick] (-2.2,3) to [in=90,out=-90] (-1.35,1.5);
     \draw[thick] (-1.38,3) to [in=90,out=-90] (-0.77,1.5);
     \draw[thick] (1.1,3) to [in=90,out=-90] (0.77,1.3);
     \draw[thick] (1.8,3) to [in=90,out=-90] (1.5,1.3);
     \draw[thick]    (-1.35,1.5)    to [in=90,out=-90] (-1.31,0.2);
    \draw[thick]    (-0.77,1.5)    to [in=90,out=-90]   (-0.9,0.2);
    \draw[thick]     (0.77,1.3)   to [in=90,out=-90]      (0.93,0.2);
     \draw[thick]     (1.5,1.3)   to [in=90,out=-90]     (1.4,0.2);
    \end{scope}
    
 \end{tikzpicture} \quad \substack{g_{\overline{C(X,L)}}\\[-2.5em]\xleftarrow{\hspace{1.5cm}}\\[-2em] \xrightarrow{\hspace{1.5cm}} \\[-2em] f_{\overline{C(X,L)}} }
 \begin{tikzpicture}[anchorbase, scale=0.5]
    \begin{scope}[xshift=1cm]
    \sphere{0}{0}{4}
    \sphere{0}{0}{2}
    
    \unknot{-1}{3}{0.4}{very thick, blue}
    \trefoil{1.5}{7}{0.4}{very thick, green}
     \unknot{-0.2}{1.5}{0.3}{very thick, blue}
    \trefoil{1.2}{5.5}{0.3}{very thick, green}
    
    \draw[thick] (-2.2,3) to [in=90,out=-90] (-1.35,1.5);
     \draw[thick] (-1.38,3) to [in=90,out=-90] (-0.77,1.5);
     \draw[thick] (1.1,3) to [in=90,out=-90] (0.77,1.3);
     \draw[thick] (1.8,3) to [in=90,out=-90] (1.5,1.3);
     
    \end{scope}
    
 \end{tikzpicture}
\]

\caption{Depiction of the maps  $f_{\overline{C(X,L)}}$ and $g_{\overline{C(X,L)}}$ }
\label{fig:newcatfg}
\end{figure}
    
Now we give a reformulation of the skein lasagna modules with respect to the category $\overline{C(X,L)}$. Given a TQFT $Z$, consider a functor $\overline{Z}$ defined as follows: For any object $[\Sigma]$, $$\overline{Z}([\Sigma]) \coloneqq  Z([\Sigma])$$ and for every morphism $f \in \text{Hom}_{C(X,L)} \subset \text{Hom}(\overline{C(X,L)}$, $$\overline{Z}(f) \coloneqq Z(f)$$
Finally, with the notation of the Figure \ref{fig:newcatfg} in mind, we define $\overline{Z}$ over $G_{\overline{C(X,L)}}$ as follows: $$\overline{Z}(g_{\overline{C(X,L)}}) \coloneqq Z(f_{\overline{C(X,L)}})^{-1}$$ \footnote{ We only defined $\overline{Z}$ over $\text{Hom}_{C(X,L)}\cup G_{\overline{C(X,L}}$, however, it extends to $\text{Hom}_{\overline{C(X,L)}}$ by simply considering the appropriate compositions.}
Then skein lasagna modules can be equivalently defined as:
\begin{equation}\label{eq:newcolim}
\mathcal{S}^{Z}(X;L,\mathbb{k}) \coloneqq \mathrm{colim}_{[\Sigma ] \in \overline{C(X,L)}} \overline{Z}([\Sigma]).
\end{equation}
Note that the functor $\overline{Z}$ restricts to $Z$ on the subcategory $C(X,L)$.
As we will see in section 4, the colimits in the definition \ref{eq:colim} and the equation \ref{eq:newcolim} are isomorphic; hence we can use either of the two definitions for the skein lasagna modules. 
Henceforth, we will write $\overline{Z}$ simply as $Z$ whenever convenient.
The idea behind defining the new category $\overline{C(X,L)}$ is to eventually take an appropriate quotient, so that "big input balls" and "small input balls" represent the same skeins as shown in the figure \ref{fig:newcat}

\begin{figure}[htbp]

\[ 
  \begin{tikzpicture}[anchorbase,scale=.2]
    \bsphere{0}{2}{14}
    \sphere{-4.3}{7.7}{1}
    \sphere{0}{0}{4}
    \sphere{0}{0}{1.5}
    \sphere{7}{0}{2}
    \sphere{-7}{0}{2}
    \trefoil{.5}{2.5}{1}{green,very thick}
    
    \unknot{-7}{1}{.75}{red,very thick}
   
    \unknot{6.75}{1}{.75}{blue,very thick}
    \unknot{8.75}{1}{.75}{blue,very thick}
   
    \draw[red, thick] (-8.3,.85) to [out=80,in=290] (-7.3,11)
    (-7,.85) to [out=80,in=290] (-4.4,11);
    \draw[blue, thick]
    (5.4,.85) to [out=100,in=270] (-5.7,11)
    (6.9,.85) to [out=100,in=270] (-2.7,11)
    (7.4,.85) to [out=90,in=270] (7.4,9)
    (8.8,.85) to [out=90,in=270] (10.3,9);
    \draw[green, thick]
    (0,7.4) to [out=80,in=260] (.7,11)
    (2.5,6.1) to [out=80, in=260] (5.2,10.7)
    (-1,1.2) to [out=80,in=260] (-.3,5.3)
    (2,1.2) to [out=80, in=260] (2,4.2)
    ;
    \trefoil{3}{9}{1.5}{green,very thick}
    \hopflinkcol{-5}{11}{1.5}{blue,very thick}{red,very thick}
    \unknot{9}{9}{1.5}{blue,very thick}
    \fsphere{0}{2}{14}
    \end{tikzpicture}\quad \cong \quad  
    \begin{tikzpicture}[anchorbase,scale=.2]
    \begin{scope}[xshift=28cm]
    \bsphere{0}{2}{14}
    \sphere{-4.3}{7.7}{1}
    \sphere{0}{0}{1.5}
    \sphere{7}{0}{3}
    \sphere{-7}{0}{3}
    \trefoil{.5}{2.5}{0.6}{green,very thick}
    
    \unknot{-7}{1}{.75}{red,very thick}
   
    \unknot{6.75}{1}{.75}{blue,very thick}
    \unknot{8.75}{1}{.75}{blue,very thick}
   
    \draw[red, thick] (-8.3,.85) to [out=80,in=290] (-7.3,11)
    (-7,.85) to [out=80,in=290] (-4.4,11);
    \draw[blue, thick]
    (5.4,.85) to [out=100,in=270] (-5.7,11)
    (6.9,.85) to [out=100,in=270] (-2.7,11)
    (7.4,.85) to [out=90,in=270] (7.4,9)
    (8.8,.85) to [out=90,in=270] (10.3,9);
    \draw[green, thick]
    (0,7.4) to [out=80,in=260] (.7,11)
    (2.5,6.1) to [out=80, in=260] (5.2,10.7)
    (-0.5,0.2) to [out=80,in=260] (-.3,5.3)
    (1.5,0.2) to [out=80, in=260] (2,4.2)
    ;
    \trefoil{3}{9}{1.5}{green,very thick}
    \hopflinkcol{-5}{11}{1.5}{blue,very thick}{red,very thick}
    \unknot{9}{9}{1.5}{blue,very thick}
    \fsphere{0}{2}{14}
    \end{scope}
    \end{tikzpicture}
    \]
\caption{Equivalence relation that will be imposed in $\overline{C(X,L)}$ after an appropriate quotient}
\label{fig:newcat}
\end{figure}

Now, we recall the 2-handle-body formula from \cite{mn22}. Later, we will give a more general statement for a general set of TQFTs.

Let $K$ be a framed link, with $n$ components and let $\textbf{k}=(k_1, \dots, k_n)$ be the framing vector. For two $n$-vectors $\textbf{r}_1$ and $\textbf{r}_2$, we introduce the following notation:
for a framed, oriented link $K=\cup K_{i}$ with $n$ components, let \textbf{$K(\textbf{r}_1,\textbf{r}_2)$} denote the $\textbf{r}_1 + \textbf{r}_2 $ cable of $K$ with $\textbf{r}_{1}^{i}$ strands going in the same direction as $K_i$ and the other $\textbf{r}^{i}_{2}$ going in the opposite direction.

\begin{Prop}[$2$-handle-body formula]\label{prop:2_hdby}
We recall that the cabled Khovanov-Rozansky homology of $K$ at level $\alpha$ is 
\[
\cKhRNa(K) =  \Bigl( \bigoplus\limits_{r\in\N^n} KhR_N (K(\textbf{r}-\alpha^-,\textbf{r}+\alpha^+))\{(1-N)(2r+|\alpha|)\} \Bigr)/ \sim
\]
where $\{ \_\}$ denotes a shift in the quantum degree, and the equivalence $\sim$ is the transitive and linear closure of the relations
\begin{equation}\label{eq:sim}
\beta_i(b)v \sim v, \ \ \psi^{[m]}_i(v) \sim 0 \text{ for } m < N-1,\ \ \psi^{[N-1]}_i(v) \sim v
\end{equation} \footnote{We will later discuss what these maps $\psi$ represent. To get an idea, one may think of $\psi^m$ as adding a genus $m$ surface connect sum with an annulus to the newly added strands. }
for all $i=1, \dots, n$; $b \in B_{r_i-\alpha^-, r_i+\alpha^+}$, and $v \in  KhR_N (K(r-\alpha^-,r+\alpha^+)).$
Then, for a 2-handlebody $(X,L)$ formed by attaching 2-handles to $B^4$ along a framed link $K\subset \partial B^4$ and a link $L$ disjoint from $K$, we get:
\begin{equation}
    S^{KhR_{N}}_{0,\alpha}(X,L,\mathbb{Q})\cong \cKhRNa(K)
\end{equation}
\end{Prop}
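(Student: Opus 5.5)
The statement is the 2-handlebody formula of \cite{mn22}. We reduce it to a computation of the colimit defining $\mathcal{S}^{KhR_N}_{0,\alpha}(X,L,\mathbb{Q})$ over a cofinal family of skeins. Decompose $X = B^4 \cup \bigcup_i H_i$, with $H_i \cong D^2\times D^2$ attached along $K_i$.

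The first step is to show that every skein is equivalent, via morphisms in $C(X,L)$, to a \emph{standard} skein of the following form.
\begin{itemize}
\item There is a single input ball $B\subset B^4$.
\item In each handle $H_i$ the skein consists of parallel cocore disks $\{p_j\}\times D^2$: $r_i-\alpha_i^-$ of them with one orientation and $r_i+\alpha_i^+$ with the other.
\item The skein runs in a collar from $\partial B$ to $K(\mathbf r-\alpha^-,\mathbf r+\alpha^+)\sqcup L$.
\end{itemize}
This uses general position. A surface can be isotoped off the cores of the handles, so that it meets each $H_i$ only in cocore disks. The remaining part in $B^4$ can then be absorbed into a larger input ball, at the cost of a morphism. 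This gives a map
\[
\bigoplus_{r} KhR_N\bigl(K(\mathbf r-\alpha^-,\mathbf r+\alpha^+)\sqcup L\bigr)\to \mathcal{S}
\]
which is surjective. Here $L$ is included in the cabled link, and the quantum shift $\{(1-N)(2r+|\alpha|)\}$ is accounted for by the Euler characteristic normalization of the cocore disks.

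The second step is to identify the kernel with the relations \eqref{eq:sim}, by analyzing which morphisms between standard skeins the colimit sees. There are three kinds of moves to account for.
\begin{itemize}
\item \textbf{Permuting cocore disks.} An isotopy moving parallel cocores around each other inside $H_i$ induces the braid action of $B_{r_i-\alpha^-,r_i+\alpha^+}$ on the cable. This gives $\beta_i(b)v\sim v$.
\item \textbf{Adding an oppositely oriented pair of cocores.} This is realized by a tube running through the handle. In $B^4$ it appears as an annulus cobordism between the cables $K(\mathbf r)$ and $K(\mathbf r+e_i)$, possibly with genus or dots; these are the maps $\psi_i^{[m]}$. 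Note that a tube around the cocore bounds a solid torus through the core of $H_i$. Neck-cutting along this tube, which is valid because $2$ is invertible in characteristic zero, together with the $KhR_N$ sphere and dot relations, forces $\psi^{[m]}_i\sim 0$ for $m<N-1$ and $\psi_i^{[N-1]}\sim\mathrm{id}$.
\item \textbf{Isotopies rel boundary and handle slides of sheets over the core.} These are generated by the two moves above together with the Sweep-around move.
\end{itemize}

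The third step is to show that these are \emph{all} the relations. Morphisms in $C(X,L)$ between standard skeins factor, after enlarging input balls, through the two move types above. To see this we use a relative version of the general position argument: a generic $1$-parameter family of surfaces meets the cores of the handles in finitely many transverse points, and each such crossing is exactly a birth or death of a cocore pair. Consequently the colimit over $C(X,L)$ restricted to the $\alpha$-summand equals the quotient stated in the proposition.

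We expect the main obstacle to be this last step: proving that the cofinal subdiagram of standard skeins computes the full colimit. This requires controlling the isotopy data in the morphisms, and checking that the Sweep-around move introduces no further relations beyond the braid action. We would handle it with the Cerf-theoretic analysis of generic paths just described, and verify the $\psi$ relations by local computations in a neighborhood of a single handle.
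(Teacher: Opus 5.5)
The paper does not reprove this statement; it quotes it from \cite{mn22}. Its own version of the argument is the $E^n$-analogue, Proposition~\ref{thm:2hdlbdy}, phrased as pseudo-finality of $2HC^{conv}$ in $\overline{C(X,L)}^{conv}$. Your outline follows the same strategy: standardize skeins, read off braids away from the singular moments of a generic path, and handle births and deaths at the singular moments. There are, however, two errors, and the second is a real gap.

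\textbf{Core and cocore are reversed.} As written, Step 1 is false. A skein cannot meet $H_i$ in cocore disks $\{p_j\}\times D^2$: those have boundary on the belt circle in $\partial X$, while $\partial\Sigma\cap\partial X=L$ lies away from the handles. What you need is not to isotope $\Sigma$ off the cores, but to make it transverse to the cocores $\{0\}\times D^2$; it cannot be pushed off them when $\alpha_i\neq 0$. After shrinking the handle, $\Sigma$ then meets $H_i$ in disks $D^2\times\{q_j\}$ parallel to the \emph{core}, and their boundaries form the cable of $K_i$ in $\partial B^4$. Likewise, in a generic path it is the cocore intersection points that move (giving braids) and are born or die in opposite-sign pairs.

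\textbf{The singular moments, which are exactly the step you defer.} A birth is not the split annulus map $\psi_i^{[m]}$. It is a finger move $\Sigma\mapsto\Sigma\# S$, so in $B^4$ the new annulus appears \emph{tubed to the existing sheet}; this is the cobordism the paper draws as ``$\#\cup$''.

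The $\psi$ relations themselves need no neck-cutting. The $m$-dotted annulus together with the two new core-parallel disks is an unknotted dotted sphere. It bounds a $3$-ball (not a solid torus) disjoint from the rest of the skein. Absorbing it into an extra input ball gives $\psi_i^{[m]}(v)\sim\langle S^2_m\rangle v$, which is $0$ or $v$. That is only the easy direction.

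For injectivity you must show that the tubed annulus yields a vector equivalent to the old one. Listing the moves does not give this, so the ``consequently'' in Step 3 does not follow. The missing computation runs as follows:
\begin{enumerate}
\item Enlarge the input ball in $B^4$ to contain the connecting tube.
\item Braid the new adjacent pair of strands into the position where $\psi_i$ inserts strands.
\item Neck-cut the tube. With the normalization in which the $(N-1)$-dotted sphere is $1$, this writes the new vector as $\sum_{a+b=N-1}\psi_i^{[b]}(w_a)$, where $w_a$ is the old standardized vector with $a$ dots on the sheet.
\item The $\psi$ relations kill every term except $w_0$.
\end{enumerate}

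The paper's $N=2$ version handles this differently, because $2HC$ contains only dotted annuli. It uses $\sigma=\mathrm{id}-\cup\cap$, torus $=2$ and dotted torus $=0$ to show that $\tfrac12(\sigma+\mathrm{id})\circ(\#\cup)$ equals the dotted annulus composed with the cap. That is where $2$ must be invertible; neck-cutting itself holds integrally.

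\textbf{A smaller point on input balls.} Morphisms in $C(X,L)$ only enlarge input balls. So an input ball that meets a cocore cannot be moved into standard position by a morphism. You need either zigzags through the collar isomorphisms or the paper's $\overline{C(X,L)}$ with its shrinking maps.
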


\section{Algebraic preliminaries}
In this section, we establish some algebraic background that we shall use in the subsequent sections. In particular, we provide the necessary algebraic recipe to perturb the category $\overline{C(X,L)}$, so that the 2-handlebody formula \ref{thm:2hdlbdy} can be realized as a ``finality of a filtered subcategory" result, which will help us rigorously construct the desired spectral sequences.
\subsection{Special colimits }
As we have already seen, lasagna modules are intrinsically colimits of some directed system. We discuss certain properties of some special types of colimits, following \cite[chapter IX]{mac1998categories}. \\
 
\begin{Def}
    An index category $\mathcal{J}$\footnote{In this context, an index category corresponds to the diagram to which we will assign objects and arrows in $C$, to form a system over which we will take a colimit. For example, in lasagna modules, $C(X,L)$ is such a category. We shall refer to morphisms in such a category as ``arrows."} is called \textbf{filtered} if 
\begin{enumerate}
    \item To any objects $j$, $j^{'}$ $\exists \ k $ and arrows $j \dashrightarrow k$ and $j^{'}\dashrightarrow k$ in the category $\mathcal{J}$.
    \item For any two arrows $u,v:i\rightarrow j$, there exists an arrow $w: j \dashrightarrow k$ such that the following diagram commutes: 
\[
      \begin{tikzcd}
        &j \arrow{rd}{w} & \\
        i \arrow{ru}{u} \arrow{rd}{v}& & k \\
        & j \arrow{ru}{w}& 
      \end{tikzcd}
  \]
\end{enumerate}
\end{Def}
\begin{Def} \label{def:final}
    A functor between two indexing categories $F:\mathcal{I} \rightarrow \mathcal{J}$ is called \textbf{final} if 
\begin{enumerate}
\item  For every object $k\in \mathcal{J}$,
$\exists \ i \in \mathcal{I}$ and an arrow $k\rightarrow Fi$ in the category $\mathcal{J}$.
\item Any two such arrows are related by the following commutative diagram: \par
\[
    \begin{tikzcd}
        k \arrow{dd} \arrow{r}{\text{Id}_{k}}&k \arrow{dd} \arrow{r}{\text{Id}_{k}}&k \arrow{dd} &k \arrow{dd} \arrow{r}{\text{Id}_{k}}&k \arrow{dd} \\
        &&&&\\
        Fi\arrow{r}{Fg_1}&\bullet & \bullet \arrow{l}{Fg_2} \arrow[dotted]{r}& \bullet & Fi \arrow{l}{Fg_3}  \\
    \end{tikzcd}
\]

\end{enumerate}
Here all the $g_i$'s are morphisms in $\mathcal{I}$.
\end{Def}
The above definition is suggesting that the subcategory $\mathcal{I}$ has "enough" objects and maps to get the following theorem:

\begin{Thm}
For an indexing functor $G: \mathcal{J} \rightarrow C$, a final functor $F:\mathcal{I}\rightarrow \mathcal{J}$ induces an isomorphism $${\text{colim}}_{\mathcal{J}}G(\mathcal{J})\rightarrow {\text{colim}}_{\mathcal{I}}G\circ F(\mathcal{I})$$ if the latter colimit exists.
\end{Thm}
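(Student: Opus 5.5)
The plan is to construct mutually inverse comparison maps directly from the universal properties. Conditions (1) and (2) of Definition \ref{def:final} say exactly that the comma category $k\downarrow F$ is nonempty and connected for every $k\in\mathcal{J}$. Write $\lambda_i\colon GF(i)\to \mathrm{colim}_{\mathcal{I}}GF$ for the colimit cocone of $GF$, and $\mu_k\colon G(k)\to\mathrm{colim}_{\mathcal{J}}G$ for that of $G$. Composition with $F$ turns $\mu$ into a cocone $(\mu_{Fi})_{i\in\mathcal{I}}$ on $GF$, so the universal property gives a canonical map
$$\phi\colon \mathrm{colim}_{\mathcal{I}}GF\to\mathrm{colim}_{\mathcal{J}}G, \qquad \phi\circ\lambda_i=\mu_{Fi}.$$
The isomorphism in the statement is the inverse of this $\phi$. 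The work is to show that $\phi$ is invertible, and to show that $\mathrm{colim}_{\mathcal{J}}G$ exists whenever $\mathrm{colim}_{\mathcal{I}}GF$ does.

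To do this, I will build a cocone $\nu$ on $G$ with vertex $\mathrm{colim}_{\mathcal{I}}GF$. For each $k\in\mathcal{J}$, choose by condition (1) an object $i$ and an arrow $a\colon k\to Fi$, and set
$$\nu_k:=\lambda_i\circ G(a).$$
The first step is to show that $\nu_k$ does not depend on the choice of $(i,a)$. By condition (2), any two choices $(i,a)$ and $(i',a')$ are joined by a zigzag of morphisms $g$ in $\mathcal{I}$ whose triangles $F(g)\circ a_{\text{source}}=a_{\text{target}}$ commute. For a single such step, with $g\colon i\to i''$ and $F(g)\circ a=a''$, the cocone property of $\lambda$ gives
$$\lambda_{i''}\circ G(a'')=\lambda_{i''}\circ GF(g)\circ G(a)=\lambda_i\circ G(a).$$
Induction along the zigzag then gives independence. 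This is the step where connectedness is used, and it is the only real point of the argument.

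Next I will check that $\nu$ is a cocone. Given $h\colon k\to k'$ in $\mathcal{J}$ and a choice $a'\colon k'\to Fi'$, the composite $a'\circ h\colon k\to Fi'$ is an admissible choice for $k$. Independence of choices then gives $\nu_{k'}\circ G(h)=\nu_k$.

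Now I will verify that $\mathrm{colim}_{\mathcal{I}}GF$ with $\nu$ satisfies the universal property for $G$. Let $\tau$ be any cocone on $G$ with vertex $T$. Restricting along $F$ gives a cocone $(\tau_{Fi})_{i\in\mathcal{I}}$ on $GF$, hence a unique map $t\colon\mathrm{colim}_{\mathcal{I}}GF\to T$ with $t\circ\lambda_i=\tau_{Fi}$. Then
$$t\circ\nu_k=t\circ\lambda_i\circ G(a)=\tau_{Fi}\circ G(a)=\tau_k,$$
so $t$ is a factorization of $\tau$ through $\nu$. For uniqueness, note that any factorization $t'$ of $\tau$ through $\nu$ satisfies $t'\circ\lambda_i=t'\circ\nu_{Fi}=\tau_{Fi}$, using the choice $a=\mathrm{id}_{Fi}$ for the object $Fi$; hence $t'=t$. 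Thus $\mathrm{colim}_{\mathcal{J}}G$ exists, and $\phi$ is identified with the identity under this identification, so $\phi$ is an isomorphism.

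In the paper's setting both colimits are of vector spaces and certainly exist, so only the comparison matters. The main obstacle I expect is purely bookkeeping in the well-definedness step: the zigzag in Definition \ref{def:final} must be read as a sequence of objects of $k\downarrow F$ joined by morphisms of $\mathcal{I}$ in alternating directions, and the induction has to handle both orientations of each arrow. The backward case is symmetric to the forward one, since the displayed identity can be read in either direction.
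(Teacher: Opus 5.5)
Your proof is correct and is the standard argument. The paper proves nothing here itself and only cites Mac Lane's Chapter~IX, whose proof is the construction you give: the cocone $\nu_k=\lambda_i\circ G(a)$, well-definedness by connectedness of $k\downarrow F$ (which is the right reading of the zigzag in Definition~\ref{def:final}), and uniqueness via the choice $a=\mathrm{id}_{Fi}$.
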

Now we point out that a weaker condition is enough for a subsystem to give the same colimit. Firstly, observe that if the index category $\mathcal{J}$ has maps $f,g:A\rightarrow B$, then in the colimit with respect to the functor $G$, we get $G(f)v\sim v $ and $G(g)v\sim v $. If the category $C$ is $Vec_{\mathbb{k}}$, then we observe that $$[\frac{1}{2}G(f)v +\frac{1}{2}G(g)v ]\sim v$$ 
Inspired by this, we define the following:
\begin{Def} \label{def:convexclosure}
    For an (indexing) category $\mathcal{J}$ and a field $\mathbb{k}$, define the \textbf{convex closure} of $\mathcal{J}$ denoted by $\mathcal{J}^{conv}$ (when the underlying field $\mathbb{k}$ is understood) as follows:
    \begin{enumerate}
        \item The objects of $\mathcal{J}^{conv}$ are the same as $\mathcal{J}$.
        \item The maps are given by the following property: For objects $A,B$, the maps are formal linear combinations $\sum_{i} c_{i} f_{i}$ where $f_{i} \in \text{Hom}_{\mathcal{J}}(A,B)$, such that $c_{i} \in \mathbb{k}$ with $\sum_{i} c_{i}=1 $ 
    \end{enumerate}
\end{Def}
\begin{Rmk}
    In the definition \ref{def:convexclosure}, note that compositions in the category $\mathcal{J}^{conv}$ are defined as follows:
    $$\text{Let } \{f_i\}_{i=1}^{n} \in Hom_{\mathcal{J}}(A,B),\  \{g_j\}_{j=1}^{m} \in Hom_{\mathcal{J}}(B,C) $$ then we define the composition of $f=\sum_{i}c_i  f_i \ , \  g=\sum_jb_j g_j$ in $\mathcal{J}^{conv}$ as follows:

    $$ g\circ f : =  \sum_{i,j} c_i b_j( g_j \circ f_i)$$ 
Note that if $\sum_{i}c_i =1 $ and $\sum_j b_j =1$ then $\sum_{i,j}c_i b_j =1$, hence the composition $g\circ f $ as defined above is valid in $\mathcal{J}^{conv}$.
\end{Rmk}
It is easy to see the following:
\begin{Thm} \label{thm:convexclosure}
    There is a natural inclusion functor $i:  \mathcal{J} \rightarrow \mathcal{J}^{conv}$ and  any functor $G:\mathcal{J}\rightarrow Vec_{\mathbb{k}} $ extends to a functor $\overline{G}$ on the convex closure, so that we have an isomorphism: $$ \text{colim}_{\mathcal{J}} G(\mathcal{J}) \cong \text{colim}_{\mathcal{J}^{conv}} \overline{G}(\mathcal{J}^{conv})$$
\end{Thm}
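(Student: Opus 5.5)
We extend $G$ to $\overline{G}$ on morphisms by linearity, prove functoriality, and then compare colimits through their universal properties: a cocone on one diagram will determine a cocone on the other, and conversely.

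\textbf{Step 1: the extension.} The inclusion $i:\mathcal{J}\to\mathcal{J}^{conv}$ is the identity on objects and sends $f$ to the one-term combination $1\cdot f$. It is a functor because composition in $\mathcal{J}^{conv}$ restricts to composition in $\mathcal{J}$ on one-term combinations. Define $\overline{G}$ to agree with $G$ on objects, and on morphisms set
$$\overline{G}\Big(\sum_i c_i f_i\Big):=\sum_i c_i\, G(f_i),$$
which is a linear map $G(A)\to G(B)$. Functoriality follows from bilinearity of composition of linear maps:
$$\overline{G}(g\circ f)=\sum_{i,j}c_ib_j\,G(g_j)G(f_i)=\overline{G}(g)\,\overline{G}(f).$$
Identities are preserved, since $\mathrm{id}_A$ in $\mathcal{J}^{conv}$ is $1\cdot \mathrm{id}_A$. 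Also $\overline{G}\circ i=G$. Morphisms here are formal combinations, so $\overline{G}$ is well defined.

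\textbf{Step 2: comparing cocones.} Every cocone $(V,\{\lambda_A\})$ under $\overline{G}$ restricts along $i$ to a cocone under $G$. Conversely, let $(V,\{\lambda_A\})$ be a cocone under $G$, so $\lambda_B\circ G(f)=\lambda_A$ for each $f:A\to B$. For a convex combination $\sum c_i f_i$ we get
$$\lambda_B\circ\overline{G}\Big(\sum_i c_if_i\Big)=\sum_i c_i\,\lambda_B G(f_i)=\Big(\sum_i c_i\Big)\lambda_A=\lambda_A.$$
This is the single place where the condition $\sum c_i=1$ is used. So the same family of maps is a cocone under $\overline{G}$.

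\textbf{Step 3: conclusion.} The two kinds of cocones correspond bijectively, since the legs are the same family of maps indexed by the same objects, and the correspondence respects maps of cocones. Hence the categories of cocones under $G$ and under $\overline{G}$ are isomorphic, so their initial objects agree. Concretely, the canonical map $\mathrm{colim}_{\mathcal{J}}G\to\mathrm{colim}_{\mathcal{J}^{conv}}\overline{G}$ induced by $i$ has an inverse coming from the universal property. Equivalently, in the explicit model $\bigoplus_A G(A)/\langle v-G(f)v\rangle$, the extra relations $v\sim\sum c_iG(f_i)v$ already lie in the span of the old relations, because
$$v-\sum_i c_iG(f_i)v=\sum_i c_i\big(v-G(f_i)v\big).$$

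There is no real obstacle in this argument. The only point needing care is that $\mathcal{J}^{conv}$ is genuinely a category, which the preceding Remark already establishes; everything else is a formal check. Note that $\mathbb{k}$ only needs to be a field (or any commutative ring) for the argument to work; characteristic zero is not used.
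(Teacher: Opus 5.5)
Your proof is correct and takes essentially the paper's approach. The paper offers no argument beyond ``it is easy to see,'' supported only by the preceding observation that $\tfrac12 G(f)v+\tfrac12 G(g)v\sim v$ already holds in $\mathrm{colim}_{\mathcal J}G$; your identity $v-\sum_i c_iG(f_i)v=\sum_i c_i\bigl(v-G(f_i)v\bigr)$ and the cocone argument (which uses $\sum_i c_i=1$ in exactly one place) make that observation rigorous.
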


Combining all these definitions and results, we define filtered colimits.
\begin{Def}
    A colimit in a category $C$, taken over a system $\mathcal{J}$, is called a filtered colimit if there is a final and filtered subsystem $\mathcal{I}$ of $\mathcal{J}$. 
\end{Def}
Filtered colimits have special properties that are crucial to finding a spectral sequence structure. 

\begin{Thm}
    Let $C$ be the category of chain complexes over $\mathbb{k}$ with an indexing functor $F:\mathcal{J} \rightarrow C$, and let $H_{*}$ be the functor giving homology. Then the differential on each object of $C$ induces a differential on the colimit \footnote{We shall discuss this induced differential later, for our specific case of lasagna modules} and we get the following \footnote{Note that this colimit is taken over the category of vector spaces after applying the forgetful functor to $C$} $$H_{*}({\text{colim}}_{\mathcal{J}}(F(\mathcal{J}))\cong {\text{colim}}_{\mathcal{J}}(H_{*}\circ F (\mathcal{J}))$$
\end{Thm}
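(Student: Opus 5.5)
The proof will go through the explicit description of filtered colimits in $Vec_{\mathbb{k}}$. I read the statement in the setting announced just before it: the colimit is filtered, i.e.\ $\mathcal{J}$ contains a final, filtered subsystem $\mathcal{I}$. For an arbitrary $\mathcal{J}$ the isomorphism fails, since colimits are only right exact. The plan has three steps: build the differential on the colimit, reduce from $\mathcal{J}$ to $\mathcal{I}$ using finality, and then check directly that homology commutes with the colimit over a filtered $\mathcal{I}$.

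\textbf{Step 1 (the differential).} Write $F(j)=(C_j,d_j)$ and let $\iota_j\colon C_j\to \mathrm{colim}_{\mathcal{J}}C$ be the structure maps in $Vec_{\mathbb{k}}$. Each $F(u)$ is a chain map, so the family $\{d_j\}$ is a natural transformation from $U\circ F$ to itself, where $U$ is the forgetful functor. Hence the maps $\iota_j\circ d_j$ form a cocone. The universal property then gives a unique $d$ on the colimit with $d\iota_j=\iota_j d_j$. Next, $d^2\iota_j=\iota_j d_j^2=0$ for every $j$, and the uniqueness part of the universal property gives $d^2=0$. If the complexes are graded, I do all of this degreewise.

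\textbf{Step 2 (reduction to $\mathcal{I}$).} By the finality theorem stated earlier, the inclusion $\mathcal{I}\hookrightarrow\mathcal{J}$ induces an isomorphism $\mathrm{colim}_{\mathcal{I}}F\cong \mathrm{colim}_{\mathcal{J}}F$. It commutes with the differentials, because both are characterized by $d\iota_i=\iota_i d_i$. Finality is a property of the functor between index categories only, so the same theorem applied to $H_*\circ F$ gives $\mathrm{colim}_{\mathcal{I}}H_*F\cong\mathrm{colim}_{\mathcal{J}}H_*F$. It therefore suffices to treat a filtered $\mathcal{I}$.

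\textbf{Step 3 (filtered case).} For filtered $\mathcal{I}$, the colimit in $Vec_{\mathbb{k}}$ is $\bigsqcup_i C_i/\!\sim$, where $x_i\sim x_{i'}$ if they have equal images under some pair of arrows $i\to k$, $i'\to k$. In particular, $\iota_i x=0$ exactly when some arrow $u\colon i\to k$ has $F(u)x=0$. This uses both filtered axioms. Define
\[
\Phi\colon \mathrm{colim}_{\mathcal{I}} H_*(C_i)\to H_*(\mathrm{colim}_{\mathcal{I}} C_i),\qquad [z_i]\mapsto [\iota_i z_i],
\]
which is well defined by the universal property.

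For surjectivity, take a cycle $\iota_i x$. Then $\iota_i(d_i x)=0$, so some $u\colon i\to k$ has $d_k F(u)x=0$. Thus $\iota_i x=\iota_k F(u)x$ is the image of the class of a genuine cycle in $C_k$.

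For injectivity, suppose $\iota_i z=d\,\iota_j y=\iota_j d_j y$. Use the first axiom to map $i$ and $j$ to a common $k$. The equality then says that $F(u)z-F(v)d_j y$ dies in the colimit, hence becomes zero after a further arrow $k\to k'$. So $z$ becomes a boundary in $C_{k'}$, and $[z]=0$ in $\mathrm{colim}\,H_*$.

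The main obstacle is exactly this description of equality and vanishing in the colimit: it is where filteredness, in particular the coequalizing axiom (2), is essential. It is also why the statement must be read with a final filtered subsystem, because without one the map $\Phi$ need not be injective.
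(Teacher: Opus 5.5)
Your argument is correct. It rests on the same basic fact as the paper's proof: in a filtered colimit, finitely many elements can be represented at a common stage, and an element vanishes exactly when it vanishes at some later stage. The packaging differs, however.

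The paper shows that filtered colimits are exact. It sketches this only for the index category $\mathbb{N}$, where ``choose representatives in the same level'' suffices and there are no parallel arrows to coequalize. It then leaves implicit the standard deduction that an exact functor preserves cycles, boundaries and hence homology. You instead write down the comparison map $\Phi$ and verify injectivity and surjectivity by a direct cycle/boundary chase for an arbitrary filtered $\mathcal{I}$. There axiom (2) genuinely enters, through your criterion for $\iota_i x=0$.

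You also supply two steps that the paper's proof of this statement skips and addresses only later, in Section 5, in the lasagna setting. The first is the construction of the induced differential via the universal property. The second is the reduction from $\mathcal{J}$ to its final filtered subsystem $\mathcal{I}$, with the check that the finality isomorphism intertwines the differentials; the paper's remark on $2HC\to\overline{C(X,L)}^{conv}$ uses the same uniqueness argument as your Step 2.

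You are right that filteredness has to be read into the statement, which as printed does not assume it. You are also right that the conclusion fails for general $\mathcal{J}$, already for a coequalizer of two chain maps.

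The paper's route yields a reusable exactness statement. Yours is self-contained and proves the result in exactly the generality of the paper's definition of a filtered colimit.
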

\begin{proof}

    The idea of filtered colimits is that they mimic colimits over the system $\{n\in \mathbb{N}\}$, hence we sketch the proof only for this system. The proof follows from observing that the colimit over a filtered system is an exact functor. 
    That is, a given exact sequence in $\text{Mod}_R$
    $$0\rightarrow A_n \rightarrow B_n \rightarrow C_n \rightarrow 0$$
    $$\text{induces the exact sequence :}$$
     $$0\rightarrow \text{colim}_{n} \ A_n \rightarrow  \text{colim}_{n} \ B_n \rightarrow  \text{colim}_{n}  C_n \rightarrow 0$$
    \par To check this, observe that since the system is filtered, for any two elements in the colimit, one can choose representatives in the same n-level. The rest of the exactness computations follow from exactness at each $n$-level.
\end{proof}
From now on, we will denote filtered colimits as ``$\textbf{Fcolim}$" to highlight their special properties and to point out that we can ``flip" the homology with filtered colimits 

Finally, we observe a relationship between colimits and certain quotient categories that will help us prove the required results later.

Let $\mathcal{J}$ be an indexing category for a colimit with respect to the functor $F$. Then, suppose we take a quotient of $\mathcal{J}$ with respect to an equivalence relation that satisfies the following condition:
$$  f,g\in \text{Hom}_{\mathcal{J}}(A,B) \ \text{and }f \sim g \implies \ F(f)=F(g)$$ (We will refer to this as $F$-invariance.)  Then we have:
\begin{Thm} \label{thm:quotientcategory}
    The functor $F$ induces a functor $\overline{F}$ on the quotient category $\mathcal{J}/\sim$ as well as an isomorphism 
    $$\text{colim}_{\mathcal{J}/\sim}(F(\mathcal{J}/\sim)) \cong{\text{colim}}_{\mathcal{J}}(F(\mathcal{J}))$$ This isomorphism is universal in the sense that the arrows given by the universal property (going away) from both colimits commute with respect to this isomorphism.
\end{Thm}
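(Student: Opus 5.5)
The plan is to compare the two colimits through their universal properties. First I would check that $\overline{F}$ is well defined. Its value on objects is $F$. For a morphism class $[f]$ in $\mathcal{J}/\sim$, with $\mathrm{Hom}_{\mathcal{J}/\sim}(A,B)=\mathrm{Hom}_{\mathcal{J}}(A,B)/\sim$, set $\overline{F}([f]) := F(f)$. $F$-invariance makes this independent of the representative.

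For the quotient category to exist at all, $\sim$ has to be compatible with composition. I would either assume this as part of the definition of a quotient category, or replace $\sim$ by the smallest congruence it generates. The latter still satisfies $F$-invariance, because $f\sim g$ gives $F(h\circ f\circ k)=F(h)F(f)F(k)=F(h)F(g)F(k)$, and the kernel relation $\{(f,g):F(f)=F(g)\}$ is already an equivalence relation closed under composition. Functoriality of $\overline{F}$ then follows directly from that of $F$. Writing $\pi:\mathcal{J}\to\mathcal{J}/\sim$ for the quotient functor, which is the identity on objects, we have $\overline{F}\circ\pi = F$.

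Next I would show that cocones coincide. A cocone on $F$ with vertex $V$ is a family $\lambda_A:F(A)\to V$ with $\lambda_B\circ F(f)=\lambda_A$ for all $f:A\to B$ in $\mathcal{J}$. Since $\pi$ is the identity on objects and surjective on morphisms, and $\overline{F}([f])=F(f)$, the compatibility conditions for a cocone on $\overline{F}$ are literally the same equations. So precomposition with $\pi$ gives a bijection
\[
\mathrm{Cocone}(\overline{F},V)\;\longrightarrow\;\mathrm{Cocone}(F,V),
\]
natural in $V$. Hence the two functors $V\mapsto \mathrm{Cocone}(-,V)$ are isomorphic. Whenever either colimit exists, the other exists too, represented by the same object with the same structure maps. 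The canonical comparison map $\mathrm{colim}_{\mathcal{J}}F \to \mathrm{colim}_{\mathcal{J}/\sim}\overline{F}$, induced by $\pi$, is therefore an isomorphism.

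The universality claim is a restatement of the same bijection. Given any cocone $\mu$ on $\overline{F}$ (equivalently, on $F$), the induced arrow out of $\mathrm{colim}_{\mathcal{J}/\sim}\overline{F}$, precomposed with the comparison isomorphism, agrees with the induced arrow out of $\mathrm{colim}_{\mathcal{J}}F$. This holds by uniqueness in the universal property, since both restrict to $\mu_A$ on each $F(A)$.

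The only real subtlety is the one addressed in the first step: making sure $\sim$ is, or can be replaced by, a congruence without losing $F$-invariance. I expect this to be the main obstacle, and the kernel-relation argument above is how I would handle it. Everything after that is formal.
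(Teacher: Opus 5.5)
Your proposal is correct. The paper states this theorem without proof, as a formal observation just before it defines pseudo-finality, so your argument supplies the standard justification the paper omits. The key point is that $\pi\colon\mathcal{J}\to\mathcal{J}/\sim$ is the identity on objects, surjective on hom-sets, and satisfies $\overline{F}\circ\pi=F$, so cocones on $F$ and on $\overline{F}$ are the same data; this gives both the isomorphism and, by uniqueness, the compatibility of the induced outgoing arrows.

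Your remark about passing to the congruence generated by $\sim$ addresses a detail the paper leaves implicit. The relations it later imposes, such as $f_{\overline{C(X,L)}}\circ g_{\overline{C(X,L)}}\sim\mathrm{Id}$ or the identification of the convex combination with the dotted annulus in $2HC^{conv}$, are only generating relations, not congruences. Your observation that $\ker F$ is already a congruence containing $\sim$ is exactly what makes that closure harmless.
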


Inspired by this result, we define a weaker condition than``finality" of a subsystem:
\begin{Def}
    A subsystem $\mathcal{I}\subset \mathcal{J}$ is called \textbf{pseudo-final} with respect to a functor $F:\mathcal{J}\rightarrow Vec_{\mathbb{k}}$ if there exists an $F$-invariant equivalence relation on $\mathcal{J}$ so that the corresponding quotient subsystem $\mathcal{I}/\sim$ is final in  $\mathcal{J}/\sim$.
\end{Def}
In light of the above discussion, the following theorems show that our new definition of skein lasagna modules is isomorphic to the old one. 
\begin{Thm}
The inclusion functor  $I: C(X,L)\rightarrow \overline{C(X,L)}$ is pseudo-final with respect to the functor $\overline{Z}$. 
    
\end{Thm}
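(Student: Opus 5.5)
We need an equivalence relation $\sim$ on the morphisms of $\overline{C(X,L)}$ that is $\overline{Z}$-invariant and makes $C(X,L)/\!\sim$ final in $\overline{C(X,L)}/\!\sim$. The natural choice is the relation generated by $f\circ g \sim \mathrm{Id}_{[\Sigma']}$ and $g\circ f\sim \mathrm{Id}_{[\Sigma]}$ for every pair $f=f_{\overline{C(X,L)}}\colon[\Sigma']\to[\Sigma]$, $g=g_{\overline{C(X,L)}}\colon[\Sigma]\to[\Sigma']$ as in Figure \ref{fig:newcatfg}. We close it under pre- and post-composition so that it is a congruence and the quotient is a category.

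$\overline{Z}$-invariance is immediate from the definition $\overline{Z}(g)=Z(f)^{-1}$. The only point to check is that $Z(f)$ is actually invertible. By condition (3) in Definition \ref{def:newcategory}, after the connect-sum realization $F$ the surface $S=\Sigma'\cap B_1$ is a disjoint union of identity cobordisms $L_i\times I$. Since $Z$ is a TQFT of abstract $S^3$ (independent of the choice of connect-sum paths by the sweep-around move), $Z(f)$ agrees with the monoidal identification $\bigotimes_j Z(L'_j)\cong Z(\#_j L'_j)\cong Z(L_1)$ tensored with identities. This is an isomorphism, so both generating relations hold after applying $\overline{Z}$, and by functoriality so does their closure.

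Next we check the two finality conditions of Definition \ref{def:final} in the quotient. For condition (1), the objects of both categories coincide, so for each $[\Sigma]$ the identity is an arrow $[\Sigma]\to I[\Sigma]$. For condition (2), let $h\colon[\Sigma]\to[\Theta]$ be any arrow in $\overline{C(X,L)}/\!\sim$. Write $h$ as a word $f_1\circ g_1\circ\cdots\circ g_n\circ f_{n+1}$, and induct on the number $n$ of $g$-letters, showing that $h$ is connected to $\mathrm{Id}_{[\Sigma]}$ by a zigzag of morphisms $Fg_i$ with $g_i$ in $C(X,L)$. If $n=0$, then $h$ lies in $C(X,L)$ and the zigzag $\mathrm{Id}\to[\Sigma]\xrightarrow{h}[\Theta]$ works. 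For the inductive step, consider the leftmost $g_1\colon[\Sigma_a]\to[\Sigma_b]$ with its partner $f\colon[\Sigma_b]\to[\Sigma_a]$ in $C(X,L)$. Since $f\circ g_1\sim\mathrm{Id}$, the composite $f\circ h$ equals $(f_1\circ g_1\circ f)\circ(\text{shorter word})$ up to $\sim$. We then use the zigzag $[\Theta]\xleftarrow{\ \cdot\ }\bullet$ through the object $[\Sigma_a]$ to connect $h$ to a word with one fewer $g$, where the connecting maps are $f$ and $f_1$, both in $C(X,L)$. Concatenating these zigzags gives the commutative diagram required in Definition \ref{def:final}.

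The main obstacle is exactly this inductive rewriting. The quotient by $\sim$ is needed to legitimately cancel $f\circ g$. We must also make sure the connecting morphisms stay inside $C(X,L)$, which has the containment constraint on input balls. We would handle this by noting that every $f_{\overline{C(X,L)}}$ is itself a morphism of $C(X,L)$, being an identity-like surface between nested balls, so each cancellation step only introduces arrows of $C(X,L)$. Once finality in the quotient is established, Theorem \ref{thm:quotientcategory} together with the finality theorem gives
$$\mathrm{colim}_{C(X,L)}Z\cong \mathrm{colim}_{\overline{C(X,L)}}\overline{Z},$$
which is the desired equivalence of the two definitions of skein lasagna modules.
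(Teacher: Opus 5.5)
Your proposal is correct and follows the same route as the paper. You quotient $\overline{C(X,L)}$ by the $\overline{Z}$-invariant relation $f\circ g\sim\mathrm{Id}$; your extra relation $g\circ f\sim\mathrm{Id}$ is harmless but never used. You then note that the first condition of Definition \ref{def:final} is trivial because the objects coincide, and for the second condition you cancel each $g$-letter against its partner $f\in C(X,L)$. You make this last step more explicit than the paper does, with an induction on the number of $g$'s and a real check that $Z(f)$ is invertible.

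One line needs repair: ``$f\circ h$ equals $(f_1\circ g_1\circ f)\circ(\text{shorter word})$'' does not typecheck, since $f$ and $h$ do not compose. Instead, write $h=f_1\circ g_1\circ h'$ and pivot the zigzag on the object $([\Sigma_b],\,g_1\circ h')$. It maps to $([\Theta],h)$ via $f_1$, and to $([\Sigma_a],h')$ via $f$ because $f\circ g_1\circ h'\sim h'$.
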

\begin{proof}
The proof follows after considering the appropriate quotient: $f_{\overline{C(X,L)}}\circ g_{\overline{C(X,L)}}\sim\text{Id}$ in terms of the definition \ref{def:newcategory}. Note that due to closure under composition, a general morphism in $\overline{C(X,L)}$ is a composition of morphisms that either exist in $C(X,L)$ or inside $G_{\overline{C(X,L)}} $. 
Now we will prove the result by using \ref{def:final}. Firstly, since both categories have the same objects, the first requirement is trivially satisfied. Now, without loss of generality, let's assume the morphism to be $\text{Id}_{[\Sigma]}$\footnote{If we check the required condition for a pair of the Identity morphism and a general morphism, we have checked it for all the possible pairs. } and a morphism of the form $$T_{\overline{C(X,L)}}=  g^{1}_{\overline{C(X,L)}}\circ [S^1 ] \circ \dots g^{n}_{\overline{C(X,L)}}\circ [S^m ]$$ where $[S^i ] $ are morphisms in the subcategory $C(X,L)$. Then the required commutative diagram as in \ref{def:final} can be seen as follows:
Since we are working in a quotient category where the maps in $F_{\overline{C(X,L)}} $  are inverses of some maps in $G_{\overline{C(X,L)}} $,  the above diagram is commutative, proving the result.
\end{proof}
\begin{Rmk}
    Henceforth, we will use the same notation for objects in $C(X,L)$ and $\overline{C(X,L)}$
\end{Rmk}

\section{Khovanov Floer Theories}

We now briefly discuss Khovanov-Floer theories (henceforth KFTs) following the work in \cite{b18}. Since all of the work is done over $\mathbb{F}_2$ in \cite{b18}, we need to check that we can get similar theories over $\mathbb{Q}$ (or any other field of characteristic zero). We note that this hardly changes the proofs given for results in \cite{b18} but rather restricts the class of examples we can work with (since most of these known spectral sequences are over $\mathbb{F}_2$). However, there is a great deal of work in progress to use other functorial fixes of Khovanov homology to extend these spectral sequences over $\mathbb{Q}$. The reader can directly jump to the next section if they would like to see the construction of the spectral sequence of skein modules while assuming the functoriality of the base KFT.

First, choose a version of Khovanov homology that gives a well-defined functor on $\mathbb{k}$ modules, where $\mathbb{k}$ has characteristic zero. This may be any functorial fix, e.g. the webs-and-foams theory of \cite{kw21} or Khovanov-Rozansky homology in \cite{kr04}. We will denote these by $Kh_{\bullet}$. Note that here, we require functoriality with respect to cobordisms in $S^3$; hence the sweep-around move must give the identity map on the Khovanov homology.
\begin{Def}
For a graded vector space $V$, a $\textbf{V}$\textbf{-complex} is a pair $(C,q)$, where $C$ is a filtered chain complex with an isomorphism $q:V\rightarrow E^{2}(C)$. Let $(C,q)$ and $(C^{'},q^{'})$ be $V$- and $W$-complexes, respectively. For a homogeneous degree-$k$ morphism $T:V\rightarrow W$, a map of $V$- and $W$- complexes that \textbf{agrees with $T$} is a degree $k$ map $f$ of filtered chain complexes so that $E^{2}({q^{'}}^{-1} \circ f \circ q)=T$. Such a map is called a quasi-isomorphism if it agrees with $\text{Id}_{V}$.
\end{Def}

Now we list certain algebraic results proved in \cite{b18}. These are only stated for $\mathbb{F}_2$, but the proofs of these results depend on the "cancellation lemma," which is true over $\mathbb{Q}$ (or any field of characteristic zero) as well. First, we give a precise statement of the cancellation lemma.

\begin{Lem}[Cancellation Lemma]
\label{lem:cancel}
Let $(C,d)$ be a chain complex of finite-dimensional vector spaces with a fixed
ordered basis $\{x_1,\dots,x_n\}$.  Write
\[
d(x_i) = \sum_{j=1}^n d_{ij}\,x_j .
\]
Suppose that for some distinct indices $k\neq \ell$ we have
\[
d_{k\ell} \neq 0 .
\]
Then $(C,d)$ is chain homotopy equivalent to a complex $(C',d')$ defined as
follows:
\begin{itemize}
\item $C'=\mathrm{span}\{x_i \mid i\neq k,\ell\}$,
\item for $i\neq k,\ell$,
\[
d'(x_i)
\;=\;
\sum_{j\neq k,\ell}
\left(d_{ij} - d_{i\ell}\, d_{k\ell}^{-1}\, d_{kj}\right)x_j .
\]
\end{itemize}

\end{Lem}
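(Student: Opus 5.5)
The plan is to prove the Cancellation Lemma (Lemma \ref{lem:cancel}) by an explicit change of basis. This splits $(C,d)$ as a direct sum of $(C',d')$ and an acyclic two-dimensional complex. I first fix the convention: by $d(x_k)=\sum_j d_{kj}x_j$, the hypothesis $d_{k\ell}\neq 0$ says that $x_\ell$ appears in $d(x_k)$ with invertible coefficient. Only invertibility of $d_{k\ell}$ is used, so the argument works over any field, and in particular over the characteristic-zero fields $\mathbb{k}$ of the paper.

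The first step is the change of basis. Set $y_k := x_k$ and $y_\ell := d(x_k) = \sum_j d_{kj}x_j$. For $i\neq k,\ell$, set
\[
y_i := x_i - d_{i\ell}\,d_{k\ell}^{-1}\,x_k .
\]
The coefficient of $x_\ell$ in $y_\ell$ is $d_{k\ell}\neq0$. Hence the transition matrix from $\{x_i\}$ to $\{y_i\}$ is triangular with respect to a suitable ordering and has invertible diagonal, so $\{y_i\}$ is a basis.

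The second step is to compute $d$ in this basis. We have $d(y_k)=y_\ell$ and $d(y_\ell)=d^2x_k=0$, so $A=\mathrm{span}\{y_k,y_\ell\}$ is a subcomplex with zero homology. For $i\neq k,\ell$,
\[
d(y_i)=\sum_j d_{ij}x_j - d_{i\ell}d_{k\ell}^{-1}\sum_j d_{kj}x_j .
\]
In this sum the $x_\ell$-coefficient cancels. I then rewrite each remaining $x_j$ with $j\neq k,\ell$ as $y_j + d_{j\ell}d_{k\ell}^{-1}x_k$, and collect the resulting multiple of $x_k=y_k$. This gives
\[
d(y_i) = \sum_{j\neq k,\ell}\bigl(d_{ij}-d_{i\ell}d_{k\ell}^{-1}d_{kj}\bigr)\,y_j + c_i\,y_k
\]
for some scalar $c_i$.

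The third step, which I expect to be the only real obstacle, is to show that $c_i=0$, or else to absorb it. If $c_i=0$, then $B=\mathrm{span}\{y_i: i\neq k,\ell\}$ is a subcomplex, and $C=A\oplus B$ as complexes. To handle $c_i$, I apply $d$ to the expression for $d(y_i)$ and use $d^2=0$. The terms in $B$ can only produce $y_k$-components through further $c_j$'s, while $c_i\,d(y_k)=c_i y_\ell$ is the only contribution along $y_\ell$, which seems to force $c_i=0$. That argument may not close directly. In that case I would instead modify $y_i$ by a multiple of $y_\ell$ rather than computing $c_i$. Concretely, I would choose the complement to be $B:=\ker(\pi)$, where $\pi$ is the projection onto $y_k$ defined by $d$-compatibility, and define $y_i' := y_i - c_i d_{k\ell}^{-1}(\ldots)$ so that $d(y_i')\in B$. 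With that choice, the coefficients along $y_j$ for $j \neq k,\ell$ are unchanged, because $d(y_\ell)=0$.

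Finally, once $C\cong A\oplus B$ as complexes with $A$ contractible, the inclusion $B\hookrightarrow C$ and the projection $C\to B$ are mutually inverse homotopy equivalences. The homotopy is $h(y_\ell)=y_k$, with $h$ equal to zero elsewhere. Identifying $B$ with $\mathrm{span}\{x_i\}_{i\neq k,\ell}$ via $y_i\mapsto x_i$ turns the matrix of $d|_B$ into exactly the stated formula for $d'$.
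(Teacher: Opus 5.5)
Your proof is correct. The paper gives no proof of this lemma: it quotes it as the standard Gaussian-elimination step, remarking only that it holds over any field. So your explicit change of basis supplies what the paper takes for granted.

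\textbf{The third step closes as you first wrote it.} Your second step shows that each $d(y_j)$ with $j\neq k,\ell$ lies in $\mathrm{span}\{y_m : m\neq \ell\}$. Set $a_{ij}=d_{ij}-d_{i\ell}d_{k\ell}^{-1}d_{kj}$. Then the $y_\ell$-coordinate of
\[
0=d^2(y_i)=\sum_{j\neq k,\ell}a_{ij}\,d(y_j)+c_i\,y_\ell
\]
is exactly $c_i$, so $c_i=0$. Equivalently, expanding $c_i$ and substituting the $(i,\ell)$ and $(k,\ell)$ entries of $d^2=0$ gives $d_{k\ell}c_i=0$ directly.

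\textbf{Drop the fallback.} Adding multiples of $y_\ell$ to $y_i$ leaves $d(y_i)$ completely unchanged, because $d(y_\ell)=0$. So that modification could never remove a $c_i y_k$ term, and as written it does not work.

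\textbf{A coordinate-free version of the third step.} Let $x_\ell^*$ be the $x_\ell$-coordinate functional. In the original basis, your homotopy is simply $h(v)=d_{k\ell}^{-1}x_\ell^*(v)\,x_k$. One checks that $h^2=0$ (since $k\neq\ell$) and $hdh=h$. Hence $p=dh+hd$ is an idempotent chain map, with the following properties:
\begin{itemize}
\item its image is $A=\mathrm{span}\{x_k,dx_k\}$;
\item its kernel is $B=\ker x_\ell^*\cap\ker(x_\ell^*\circ d)$, which is visibly $d$-stable because $x_\ell^*(d(dv))=0$;
\item $y_i=(1-p)x_i$ for $i\neq k,\ell$.
\end{itemize}
This makes $C=A\oplus B$ as complexes immediate, with no coefficient bookkeeping. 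It also gives $d'^2=0$ for free through the identification $B\cong C'$. No grading hypothesis is needed, and if the basis is homogeneous then all the $y$'s are homogeneous, since the correction term in $y_i$ is nonzero only when $x_i$ and $x_k$ both map to the degree of $x_\ell$.
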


As a consequence of \ref{lem:cancel}, we get the following results.

\begin{Lem} \label{ij}
   Let $f$ be a degree-zero map of filtered chain complexes such that $E^{i}(f)$ is an isomorphism. Then $E^{j}(f)$ is also an isomorphism $\forall j 
   \geq i$. This indeed implies that for a morphism of spectral sequences, an isomorphism on a page gives an isomorphism on all higher pages.
\end{Lem}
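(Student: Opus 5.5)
We prove this by induction on $j\ge i$, using the standard construction of the pages of the spectral sequence of a filtered complex, $E^{j+1}=H(E^j,d_j)$, together with the fact that $f$ induces maps $E^j(f)\colon E^j(C)\to E^j(C')$ commuting with the differentials $d_j$. The inductive step is immediate. Suppose $E^j(f)$ is an isomorphism of vector spaces and a chain map $(E^j(C),d_j)\to(E^j(C'),d_j')$. Its inverse is then also a chain map, so $E^j(f)$ is an isomorphism of chain complexes. Homology is a functor, so it carries isomorphisms to isomorphisms, and therefore $H(E^j(f))$ is an isomorphism. Since $f$ is a degree-zero filtered chain map, the naturality of the spectral sequence construction gives $H(E^j(f))=E^{j+1}(f)$. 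By induction, $E^{j}(f)$ is an isomorphism for every $j\ge i$.

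We use the Cancellation Lemma \ref{lem:cancel} only to make $E^j(f)$ and $d_j$ concrete over a field $\mathbb{k}$ of characteristic zero, which is the setting of this paper. Starting from a filtered basis, we cancel every component of the differential that preserves filtration level. The Cancellation Lemma makes each such cancellation a filtered chain homotopy equivalence, since every basis element has a definite filtration level. After these cancellations, the complex $C$ becomes a filtered complex whose associated graded is $E^1$. Repeating the procedure, cancelling the components that drop filtration by exactly $r$, produces a model whose underlying space is $E^{r+1}$. In such a reduced model, $d_r$ is exactly the part of the differential that lowers filtration by $r$. This identifies the pages $E^j$ and the differentials $d_j$ with the usual ones, independently of the choice of basis, up to the canonical isomorphisms. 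The identification $E^{j+1}(f)=H(E^j(f))$ is then the usual naturality statement. The cancellation argument only needs to invert nonzero scalars, so it works over any field; no property special to $\mathbb{F}_2$ enters.

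The main subtlety is this naturality: for a morphism of spectral sequences we must make sure that $E^{j+1}(f)$ is the map induced on homology, and is not changed by the homotopy equivalences coming from cancellation. We handle this with the classical description $E^j_p=Z^j_p/B^j_p$, where $Z^j_p$ consists of elements of filtration level $p$ whose boundary lies $j$ levels lower, and $B^j_p$ is the corresponding subspace of boundaries. Because $f$ preserves the filtration, it maps $Z^j$ into $Z^j$ and $B^j$ into $B^j$. Hence $f$ induces a map on each page, and these maps commute with $d_j$ and with the isomorphisms $H(E^j)\cong E^{j+1}$, exactly as for any morphism of filtered complexes. The Cancellation Lemma is used only to transport this statement across the chain homotopy equivalences. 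The final sentence of the lemma follows at once: a morphism of spectral sequences is a family of maps commuting with the $d_j$ and compatible with the identification of $E^{j+1}$ with $H(E^j)$, so the same induction applies.
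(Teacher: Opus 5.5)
Your argument is correct, but it is not the route the paper indicates. The paper gives no written proof of Lemma \ref{ij}. It states it, together with Lemmas \ref{fg} and \ref{lem:collase}, as a consequence of the Cancellation Lemma \ref{lem:cancel}, deferring to \cite{b18}. That is, it has in mind an argument through the explicit reduced models of the spectral sequence obtained by iterated cancellation.

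Your core step is instead the classical mapping lemma. $E^j(f)$ is a bijective chain map of $(E^j,d_j)$, so its inverse is a chain map. Homology preserves isomorphisms. And $H(E^j(f))=E^{j+1}(f)$ by naturality of the $Z^j/B^j$ construction, which you correctly check is preserved by a filtered chain map.

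This is self-contained and more general. It needs no field, no characteristic zero, no finite-dimensionality and no bounded filtration. The cancellation route uses all of these, harmlessly here, since KFT complexes are finite-dimensional over $\mathbb{k}$.

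Consequently your second paragraph and the cancellation remarks in your third are superfluous, and it would be cleaner to delete them. As written, they assert without proof two things your induction never uses:
\begin{itemize}
\item that the cancellation models recover $E^j$ and $d_j$ canonically and independently of the basis;
\item that each cancellation is a \emph{filtered} homotopy equivalence. Lemma \ref{lem:cancel} as stated only gives a chain homotopy equivalence, and the filtered version needs an extra check on the lengths of the cancelled components.
\end{itemize}

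What the cancellation picture buys the paper is a concrete model of each page, which it reuses for Lemma \ref{lem:collase} and for describing the pages by iterated cancellation. For Lemma \ref{ij} itself, your homological-algebra induction is the more direct and more robust proof.
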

\begin{Lem} \label{fg}
   For two morphisms $f,g$ of filtered chain complexes, if $E^{i}(f)=E^{i}(g)$ then $E^{j}(f)=E^{j}(g)$, $\forall j\geq i$ 
\end{Lem}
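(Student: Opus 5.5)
The statement to prove is Lemma~\ref{fg}: if $f,g\colon C\to C'$ are morphisms of filtered chain complexes with $E^{i}(f)=E^{i}(g)$, then $E^{j}(f)=E^{j}(g)$ for all $j\geq i$. I would argue by induction on $j$, so it suffices to treat $j=i+1$. The guiding fact is that the page $E^{i+1}$, together with the maps induced on it, is determined by $E^{i}$ with its differential $d_i$. That is, $E^{i+1}(C)=H_*(E^{i}(C),d_i)$, and $E^{i+1}(f)$ is the map induced on homology by the chain map $E^{i}(f)\colon (E^{i}(C),d_i)\to (E^{i}(C'),d_i')$.

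The induction step then has three parts.
\begin{enumerate}
\item Recall the standard construction of $E^r$ as a subquotient $Z^r/B^r$ of $C$. A filtered map $f$ sends $Z^r(C)$ into $Z^r(C')$ and $B^r(C)$ into $B^r(C')$. It therefore induces $E^{r}(f)$, and this map commutes with $d_r$.
\item Record the identification $E^{r+1}\cong H(E^r,d_r)$. It is natural, so $E^{r+1}(f)=H(E^{r}(f))$ under this identification.
\item Conclude. If $E^{i}(f)=E^{i}(g)$ as chain maps of $(E^i,d_i)$, they induce the same map on homology, so $E^{i+1}(f)=E^{i+1}(g)$.
\end{enumerate}
Induction then gives the claim for all $j\ge i$.

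To stay in the spirit of the paper, I could instead use Lemma~\ref{lem:cancel}. Over a field, I would repeatedly cancel the components of the differential that strictly lower the filtration by fewer than $i$ steps. This replaces $C$ and $C'$ by filtered chain homotopy equivalent complexes whose underlying graded spaces are $E^{i}(C)$ and $E^{i}(C')$, with differentials of filtration drop at least $i$. The transported maps $f,g$ then have leading (filtration-preserving) parts equal to $E^i(f)=E^i(g)$. The difference $f-g$ strictly raises filtration on the $E^i$-level model, and the argument proceeds by cancelling further to pass to $E^{i+1}$.

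The main obstacle is naturality. I must make sure that the transported maps, or the induced maps in the subquotient description, really depend only on the class of $f$ at page $i$. Concretely, $f-g$ sends $Z^{i}(C)$ into $Z^{i-1}(C')+B^{i-1}(C')$, the denominator-type subspace at page $i$, and I must check that this forces $Z^{i+1}(C)$ to land in the denominator at page $i+1$. I expect this to follow from $E^{i+1}(f)$ being a quotient of the restriction of $E^i(f)$ to $d_i$-cycles. Even so, I would verify it carefully at the level of representatives $x\in Z^{i+1}_p$, writing $(f-g)x=y+\partial z$ and checking the filtration levels of $y$ and $z$.

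For these reasons I would present the subquotient proof as the main argument and mention the cancellation viewpoint only as motivation.
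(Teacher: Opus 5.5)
Your main (subquotient) argument is correct, but it follows a different route from the paper. The paper gives no separate proof of Lemma~\ref{fg}. It records it, together with Lemmas~\ref{ij} and~\ref{lem:collase}, as a consequence of the Cancellation Lemma~\ref{lem:cancel}, following \cite{b18}. The idea there is that over a field each page $E^i$ is the underlying space of a complex obtained from $C$ by cancelling all differential components of small filtration shift, and $f,g$ are compared after transporting them to these reduced models.

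You instead use the intrinsic subquotient $E^r=Z^r/(Z^{r-1}+B^{r-1})$ (suitably indexed) and the natural identification $E^{r+1}\cong H_*(E^r,d_r)$, under which $E^{r+1}(f)=H_*(E^r(f))$. The lemma then reduces to the fact that equal chain maps induce equal maps on homology. Equivalently, by additivity, $E^i(f-g)=0$ forces $E^{i+1}(f-g)=H_*(0)=0$.

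Your route is shorter and works over any ring and for arbitrary filtered complexes, with no basis or finite-dimensionality needed. The ``main obstacle'' you flag is not a separate issue. The representative-level claim that $(f-g)(Z^{i+1})$ lands in the page-$(i+1)$ denominator is exactly what naturality of $Z^{r+1}\to\ker d_r\subset E^r$ gives.

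The cancellation route is more hands-on and matches how the paper describes pages. To make it rigorous, though, one must know that the cancellation homotopy equivalences induce the canonical identifications on every page and intertwine the transported maps with $E^r(f)$. That is itself the naturality your argument uses directly. So you are right to keep your sketch of that viewpoint (``$f-g$ strictly raises filtration, then cancel further'') as motivation only.
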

\begin{Lem} \label{lem:collase}
Suppose $(C,d)$ is a filtered chain complex such that $E^i(C)=E^{\infty}(C)$. Then there is a degree zero map of filtered chain complexes $$\pi : (C,d)\rightarrow(E^i(C),0)$$  that induces the identity map on the $i$-th page: $$E^i(\pi):E^i(C)\rightarrow E^i(C)=\text{Id}_{E^i(C)}$$
\end{Lem}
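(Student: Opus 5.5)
The plan is to reduce $(C,d)$ to a normal form using the Cancellation Lemma \ref{lem:cancel}, and then to define $\pi$ as a projection in that normal form. Throughout I assume, as in all our applications, that $C$ is finite-dimensional over $\mathbb{k}$ and that its filtration is bounded.

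\textbf{Step 1: normal form.} I will show that $C$ is isomorphic, as a filtered (and homologically graded) complex, to a direct sum of elementary pieces of two kinds:
\begin{itemize}
\item single generators $z$ with $dz=0$;
\item pairs $x\mapsto y$ with $dx=y$, where the filtration gap $r=\mathrm{fl}(x)-\mathrm{fl}(y)\geq 0$.
\end{itemize}
To get this, choose a homogeneous basis adapted to the filtration. Then run a filtered version of Lemma \ref{lem:cancel}: at each step, pick a nonzero matrix entry $d_{k\ell}$ whose filtration drop $\mathrm{fl}(x_k)-\mathrm{fl}(x_\ell)$ is \emph{minimal} among all nonzero entries. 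Minimality guarantees that the correction terms $d_{i\ell}d_{k\ell}^{-1}d_{kj}$ never raise filtration, so the reduced complex is still filtered. It also makes the change of basis $x_i\mapsto x_i - d_{i\ell}d_{k\ell}^{-1}x_k$ (together with the dual change on the targets) a filtered isomorphism rather than merely a homotopy equivalence. This splits off the pair $x_k\mapsto dx_k$ as a direct summand, and induction on dimension finishes the decomposition.

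\textbf{Step 2: read off the spectral sequence.} In this normal form the spectral sequence is visible directly. A single generator survives to $E^\infty$. A pair of gap $r$ survives to $E^r$ and is killed by $d_r$, so it is absent from $E^{r+1}$. The hypothesis $E^i(C)=E^\infty(C)$ therefore says that every pair has $r<i$. Consequently $E^i(C)$ is canonically identified with $Z=\mathrm{span}\{z\}$, with its induced grading and filtration.

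\textbf{Step 3: define $\pi$.} Let $\pi\colon C\to (Z,0)\cong(E^i(C),0)$ be the projection onto $Z$ that kills every pair summand.
\begin{itemize}
\item $\pi$ is a chain map: $\pi d=0$, because $d$ vanishes on $Z$ and $d$ maps pair summands into pair summands.
\item $\pi$ is filtered of degree zero: it is a coordinate projection in a filtration-adapted basis.
\item $E^i(\pi)=\mathrm{Id}$: on page $i$ only the classes of the $z$'s remain, and $\pi$ sends each of them to itself.
\end{itemize}
This identification of $E^i(C)$ with $Z$ is precisely the one used to identify the target with $(E^i(C),0)$, so $E^i(\pi)$ is literally the identity of $E^i(C)$.

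\textbf{Main obstacle.} The delicate step is the filtered cancellation in Step 1. Lemma \ref{lem:cancel} as stated only yields a homotopy equivalence and says nothing about filtrations. I must check two things: that choosing a minimal-drop entry keeps every map filtered, and that the procedure gives an honest filtered direct-sum splitting of $C$ itself, not just of a reduced model. If I obtain only a filtered homotopy equivalence $C\simeq C'$, I would instead define $\pi$ on $C'$ and precompose with the filtered projection $C\to C'$ from the cancellation. Since that projection induces isomorphisms on all pages by Lemma \ref{ij}, I would then correct by the inverse isomorphism on $E^i$ to obtain exactly the identity.
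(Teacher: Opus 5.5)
Your argument is correct and follows essentially the route the paper takes. The paper gives no separate proof: it derives this lemma from the Cancellation Lemma~\ref{lem:cancel} through its iterated-cancellation description of the pages. Once the parts of the differential with filtration shift less than $i$ have been cancelled, the hypothesis $E^i=E^\infty$ forces the residual differential to vanish, and $\pi$ is the resulting projection. Your minimal-drop choice and explicit filtered splitting into cycles and pairs make precise what the paper leaves implicit, namely that each cancellation can be carried out by filtered maps. (In that splitting, the differentials of the remaining generators have no $x_k$-components; this follows from $d^2=0$.)
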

When $(C,d)$ is a filtered chain complex over a field $\mathbb{k}$ of characteristic zero, the corresponding spectral sequence has a description in terms of the cancellation lemma. 
Inductively we have :
\begin{itemize}
    \item Let $(C_{(0)},d_{(0)})$ be $(C,d)$
    \item $(C_{(i)},d_{(i)})$ is obtained from $(C_{(i-1)},d_{(i-1)})$ by canceling the part of $d_{(i-1)}$ that shifts the filtration degree exactly by $(i-1)$
    \item Then the corresponding spectral sequence $(E^i,d^i)$ consists of $E^i \coloneqq C_i$ with grading inherited from $C$, while $d_i$ is the sum of components of $d_{(i)}$ that shift the filtration degree exactly by $i$.
\end{itemize}

We now define Khovanov-Floer theories in a way that will give us the right framework to work with skein lasagna modules. This definition is almost identical to the one given by \cite{b18}.
\begin{Def} \label{KFT}
    A Khovanov-Floer theory $\mathcal{A}$ is a rule that assigns to a link diagram $\mathcal{D}$, a $Kh_{\bullet}(\mathcal{D})$-complex $\mathcal{A}(\mathcal{D})$ that satisfies the following properties:
\begin{enumerate}
    \item For a planar isotopy taking  $\mathcal{D}$ to $\mathcal{D^{'}}$ , we get a morphism $\mathcal{A}(\mathcal{D}) \rightarrow \mathcal{A}(\mathcal{D^{'}})$, that agrees on the $E^2$ page with the isomorphism on the map on $Kh_{\bullet}$. (Note that Lemma \ref{ij} implies that this map gives an isomorphism on all higher pages.)
    \item For $\mathcal{D^{'}}$ obtained from $\mathcal{D}$ by 1-handle attachment, then it induces a map on $\mathcal{A}$ that agrees on the $E^2$ page with the map on $Kh_{\bullet}$.
    \item For disjoint union of diagrams $\mathcal{D} \sqcup \mathcal{D^{'}}$, we have a morphism $\mathcal{A}(\mathcal{D} \sqcup \mathcal{D^{'}}) \rightarrow \mathcal{A}(\mathcal{D}) \otimes  \mathcal{A}(\mathcal{D^{'}})$ that agrees on the $E^2$ page with the standard isomorphism on $Kh_{\bullet}$. (This implies isomorphism on all higher pages as well).
    \item For every diagram $\mathcal{U_{n}}$ of the unlink (with n components), $E^{2}(\mathcal{U_{n}})=E^{\infty}(\mathcal{U_{n}})$.
    \item For the empty link, we have $\mathcal{A}(\emptyset)=\mathbb{k}$ considered as a complex centered at degree zero.
\end{enumerate}
\end{Def}

\begin{Def}
    A Khovanov-Floer theory is called \textbf{functorial} if for every movie $M$, we have a map on the complexes $ F_{M}:\mathcal{A}(\mathcal{D}) \rightarrow \mathcal{A}( \mathcal{D^{'}})$ that agrees on the $E^2$ page with the map on $Kh_{\bullet}$.
    
\end{Def}
We now state a theorem from \cite{b18}, which allows us to view each page of a KFT as a well-defined monoidal link homology, thereby providing a necessary framework for defining skein lasagna modules. We emphasize, however, that in the present context we work over an arbitrary field.

\begin{Thm}
    For a Khovanov-Floer theory $\mathcal{A}$, for every Reidemeister move, there is a map $Kh_{\bullet}(D)$-complexes that agrees with the Reidemeister map on $Kh_{\bullet}$. Hence, equivalent movies induce the same map on the spectral sequence $(E^n,d^n)_{n \geq 2}$
\end{Thm}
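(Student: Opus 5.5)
The plan is to reduce the statement to the Reidemeister moves and the movie moves. By the definition of a Khovanov-Floer theory, every elementary cobordism (planar isotopy, 1-handle attachment, disjoint union, births and deaths via the unlink axiom) comes with a filtered chain map that agrees with the corresponding map on $Kh_{\bullet}$ on the $E^2$ page. Once the Reidemeister moves are handled as well, every movie gets a map $F_M$ of filtered complexes. The spectral-sequence statement then follows from Lemma \ref{fg}. If two movies $M, M'$ are equivalent, then $Kh_{\bullet}$ is functorial, including the sweep-around move, because we fixed a functorial version over $\mathbb{k}$. Hence $E^2(F_M)=Kh_{\bullet}(M)=Kh_{\bullet}(M')=E^2(F_{M'})$, and Lemma \ref{fg} gives $E^n(F_M)=E^n(F_{M'})$ for all $n\ge 2$. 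So each page $(E^n,d^n)$ is a well-defined functor on link cobordisms.

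The real content is the existence of a map of $Kh_{\bullet}(D)$-complexes that agrees with each Reidemeister map. I would follow Baldwin's argument from \cite{b18}, checking that it only uses the cancellation lemma (Lemma \ref{lem:cancel}), which holds over any field. Each Reidemeister move $D\to D'$ can be written as a composite of 1-handle attachments together with births and deaths of small unknotted components in a local picture. For instance, an R1 move is the saddle merging a small circle, followed by capping, and R2 and R3 are handled similarly via the standard cube-of-resolutions decomposition. Axioms (2), (3) and (4) then give filtered maps for each piece, and the composite agrees on $E^2$ with the composite of the $Kh_{\bullet}$ maps, which is the Reidemeister map.

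For births and deaths, the unlink has $E^2=E^\infty$ by axiom (4). Lemma \ref{lem:collase} therefore gives a filtered projection $\mathcal{A}(\mathcal{U}_n)\to (E^2,0)$ inducing the identity on $E^2$. Dually, by cancellation, there is an inclusion $(E^2,0)\to \mathcal{A}(\mathcal{U}_n)$ inducing the identity. Combined with the tensor morphism of axiom (3), these produce filtered cup and cap maps. Their $E^2$ restrictions are the unit and counit of $Kh_{\bullet}$ on the unknot, since on $E^2$ they are determined by the identification $q$.

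I expect the main obstacle to be showing that the composite of handle maps genuinely equals the Reidemeister isomorphism on $E^2$, rather than merely some map. This matters especially in characteristic zero, where signs and the chosen functorial fix (foams, or \cite{kr04}) affect the saddle maps. The first thing I would try is to take the Reidemeister maps in $Kh_{\bullet}$ to be defined by exactly these handle decompositions, which is how Bar-Natan-style invariance proofs present them up to homotopy. Equality on $E^2$ then holds by construction, and invertibility on higher pages follows from Lemma \ref{ij}.
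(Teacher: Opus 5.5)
Your overall architecture matches the paper's, which follows Baldwin. You build filtered maps for elementary pieces from the axioms, get births and deaths of unlinks from axiom (4) and Lemma \ref{lem:collase}, and deduce movie invariance on every page from agreement on $E^2$ plus Lemma \ref{fg}; that last step is fine. The gap is in the step you call the real content, the decomposition of a Reidemeister move.

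If the small unknotted components are crossingless circles, as your R1 example suggests, the decomposition cannot work. A diagrammatic 1-handle attachment is a planar saddle performed away from crossings. Births and deaths of crossingless circles add or remove no crossings either. So no such composite realizes R1 or R2, both of which create crossings. Your R1 recipe (merge a small circle, then cap) produces no kink. The cube-of-resolutions decomposition is beside the point: Bar-Natan's chain maps are sums of cobordisms between resolutions, not a factorization of the Reidemeister trace into handles.

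The missing idea is that the born component must itself be an unlink diagram carrying the crossings of the move:
\begin{itemize}
\item for R1, one births a one-crossing (kinked) unknot diagram beside the strand and merges it in with a single saddle;
\item for R2, one births, e.g., a two-crossing diagram of the 2-component unlink and attaches two saddles;
\item for R3, the paper composes R2 moves with the death of a 3-component unlink diagram.
\end{itemize}
Such births are legitimate precisely because of axiom (4). For every unlink diagram $U$, $\mathcal{A}(U)$ is quasi-isomorphic to $(E^2(U),0)$. Hence the $Kh_{\bullet}$ birth and death maps for $U$, and all Reidemeister moves among unlink diagrams, are realized by filtered maps. You have this ingredient in your third paragraph but never feed it into the decomposition. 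Also, a birth requires a quasi-inverse of the morphism in axiom (3), which is where cancellation over $\mathbb{k}$ enters.

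Your proposed fix for agreement on $E^2$ is also misdirected. The Reidemeister map is fixed by the chosen functorial $Kh_{\bullet}$ and cannot be redefined, so nothing holds ``by construction.'' Nor do Bar-Natan's invariance proofs present the Reidemeister maps that way. The correct reason is that the cobordism \emph{birth of $U$, then saddles} is isotopic rel boundary to the trace of the Reidemeister move. Functoriality of $Kh_{\bullet}$, the same input you already use for movie moves, then forces the composite to equal the Reidemeister map on $E^2$. In particular the composite is an isomorphism there, and Lemma \ref{ij} propagates this to all later pages. With a functorial fix there is no residual sign ambiguity to worry about.
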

\begin{Rmk}
    Note that Khovanov-Rozansky homology is a functorial theory for oriented links. Making resolutions orientation sensitive is a crucial aspect of making maps functorial without a sign ambiguity. However, we ignore the orientations for the following proof. For the oriented case, we simply need to consider all the Reidemeister moves of the oriented links. The proofs are identical in the oriented case.
\end{Rmk}
\begin{proof}

  We sketch the proof from \cite{b18}, noting that at each step, the ground field can be arbitrary. First, we show that each page satisfies invariance under Reidemeister moves. To do that, we sketch the realization of Reidemeister moves as the birth of an unknot and handle attachments following \cite{b18}. We have diagrammatically illustrated it in the figures \ref{fig:R1}, \ref{fig:R2} and \ref{fig:R3}. We realize the Reidemeister moves as a composition of the following maps (which, from the axioms of KFTs, we know are isomorphisms)
\begin{itemize}
    \item Birth of an unknot disjoint from the link.
     \item Reidemeister moves of the unlink to go to a different diagram of the unlink. Note that since the spectral sequence for any diagram of the unknot collapses on the 2nd page, this map is an isomorphism and is the map on $Kh_{\bullet}$
     \item Diagrammatic 1-handle attachment.
     \item Connect sum map and its inverse, which represent isomorphisms on all pages.
 \end{itemize}
 Now, by Lemma \ref{lem:collase}, for any diagram $U$ of the unlink, the corresponding $Kh_{\bullet}(U)$-complex is in the quasi-isomorphism class of $(E^2(U),0)$. Hence for every Reidemeister move of the unlink, there is a map of $Kh_{\bullet}$-complexes that agrees on the $E^2$ page with the maps on $Kh_{\bullet}$
 Finally, using \ref{fg} and \ref{ij} and the fact that the maps on $E^{2}$, equivalent movies give the same maps, we get that each page $E^{n}$ of a KFT is a functor from links in $\mathbb{R}^3$ to modules.
\end{proof}
   
\begin{figure}[ht] 

\centering
\includegraphics[height=5.7cm]{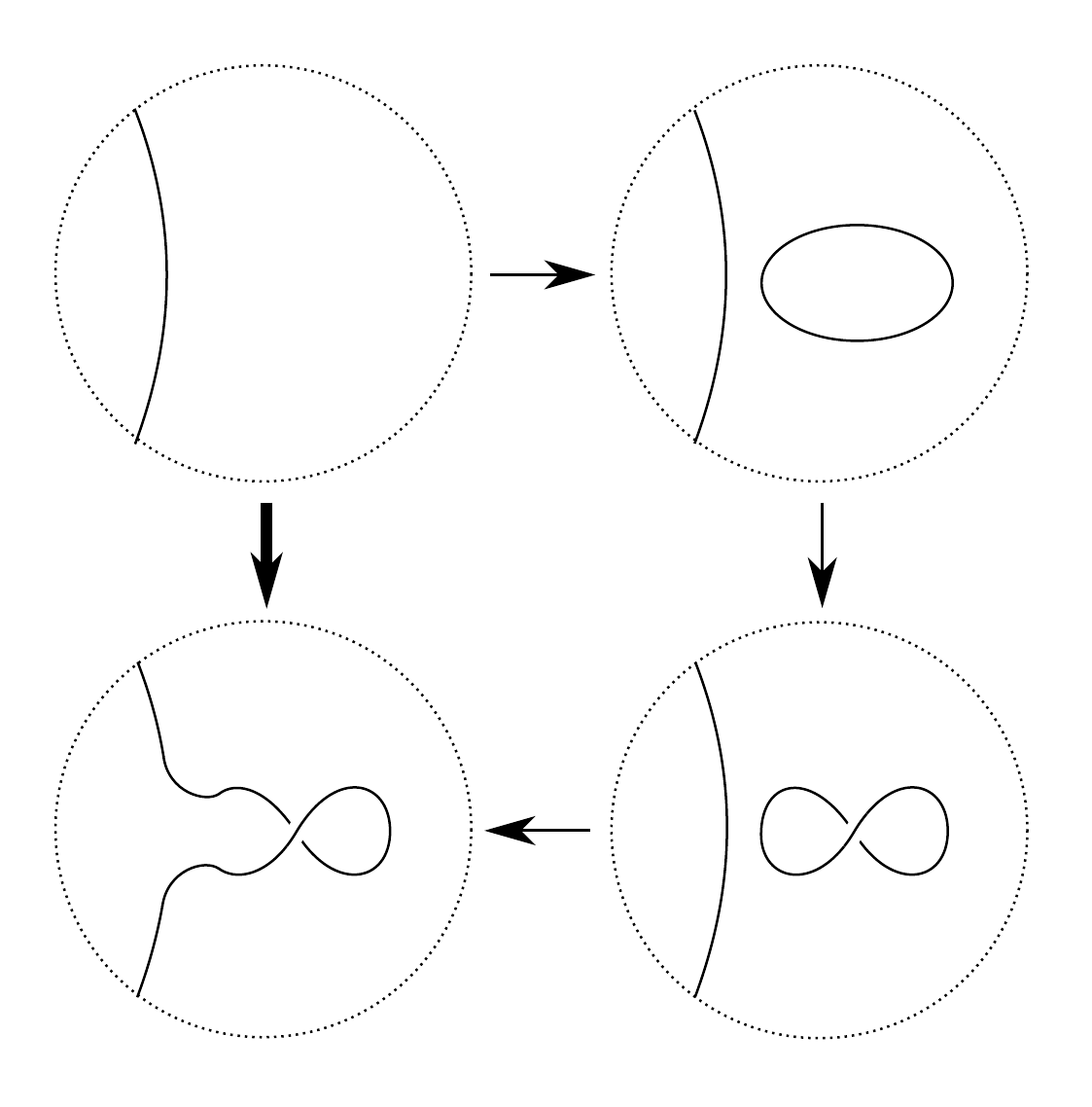}
\caption{ Reidemeister 1 move realized as a birth followed by handle attachments}
\label{fig:R1}
\end{figure}

\begin{figure}[ht] 

\centering
\includegraphics[height=6cm]{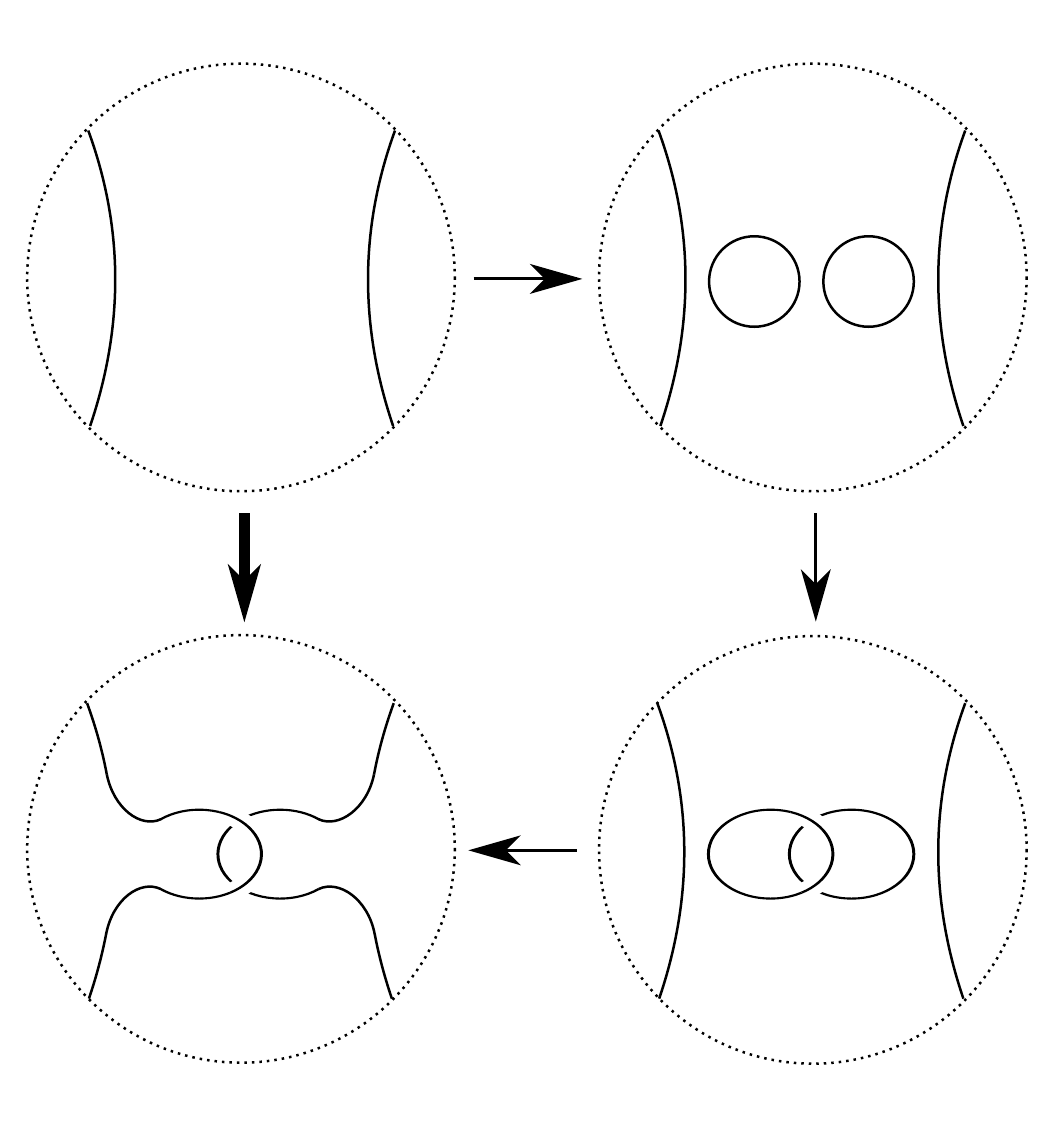}
\caption{ Reidemeister 2 move realized as a birth followed by handle attachments}
\label{fig:R2}
\end{figure}

\begin{figure}[ht] 

\centering
\includegraphics[height=6 cm]{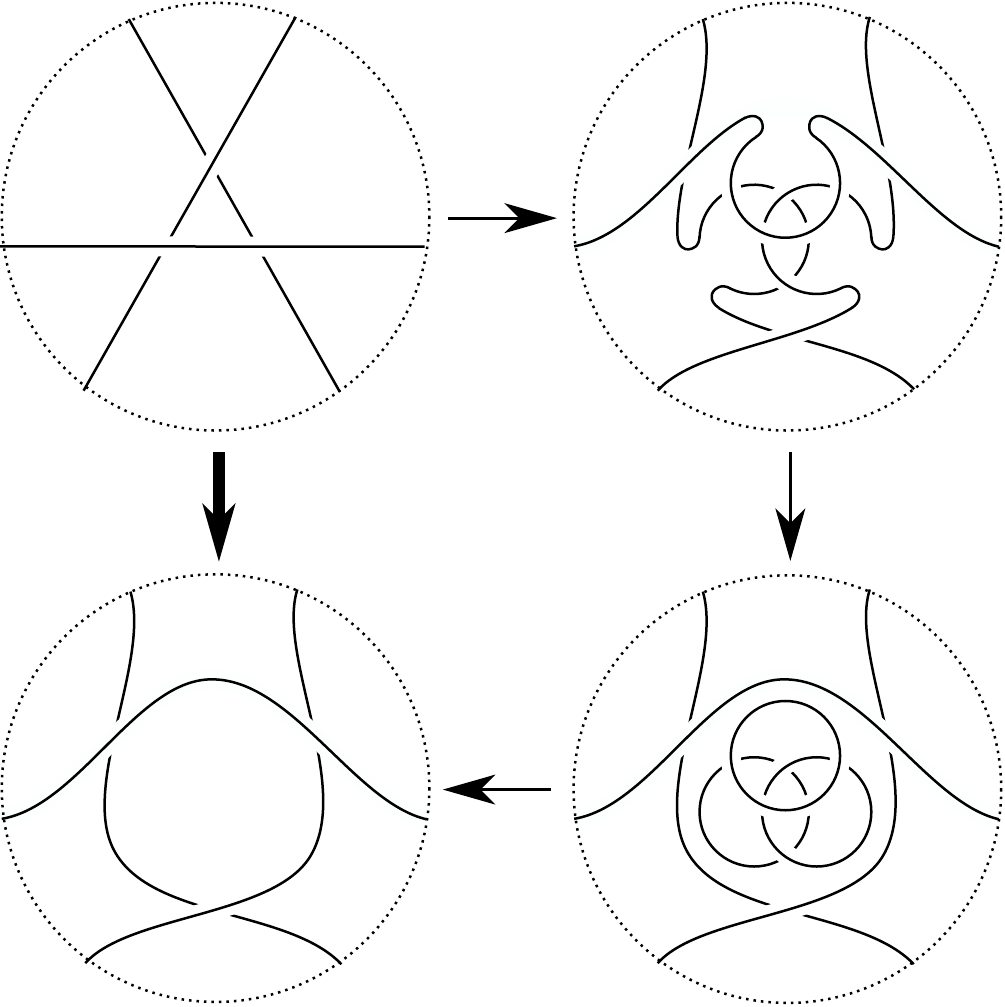}
\caption{ Reidemeister 3 move, realized as a composition of Reidemeister 2 moves with a death of the (3-component) unlink }
\label{fig:R3}
\end{figure}

Note that the first arrow in Figure \ref{fig:R2} is the Reidemeister 2 move proved beforehand.

Hence, we have the following corollary:
\begin{Cor}
    Every Khovanov-Floer theory is functorial.
\end{Cor}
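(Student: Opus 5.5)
The corollary is a direct repackaging of the preceding theorem. Recall that a KFT is called functorial if every movie $M$ from $\mathcal{D}$ to $\mathcal{D}'$ induces a map of complexes $F_M\colon \mathcal{A}(\mathcal{D})\to\mathcal{A}(\mathcal{D}')$ that agrees on the $E^2$ page with the map $Kh_\bullet(M)$. The plan is to build $F_M$ by decomposing $M$ into elementary pieces and composing the chain maps supplied for each piece.

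\textbf{Step 1 (decompose the movie).} Write $M$ as a finite sequence of elementary moves: planar isotopies, Reidemeister moves, births and deaths of unknots, and saddles (1-handle attachments).

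\textbf{Step 2 (a chain map for each piece).} Each elementary move gets a map of $Kh_\bullet$-complexes that agrees with the corresponding map on $Kh_\bullet$.
\begin{itemize}
\item Planar isotopies and saddles: these maps are given directly by axioms (1) and (2) of Definition \ref{KFT}.
\item Reidemeister moves: the preceding theorem supplies these maps.
\item Births and deaths: use axioms (3) and (5) to identify $\mathcal{A}(\mathcal{D}\sqcup U)$ with $\mathcal{A}(\mathcal{D})\otimes\mathcal{A}(U)$ on all pages beyond $E^2$. Then use axiom (4) together with Lemma \ref{lem:collase} to get a filtered map $\mathcal{A}(U)\to (E^2(U),0)$ inducing the identity on $E^2$. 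The unit and counit of $Kh_\bullet(U)$ are homogeneous maps to and from $\mathbb{k}=\mathcal{A}(\emptyset)$. Tensoring these with the identity on $\mathcal{A}(\mathcal{D})$ gives chain maps realizing the birth and death cobordism maps on $E^2$.
\end{itemize}
If the degree-shift bookkeeping becomes awkward, I would instead quote the treatment of births and deaths in \cite{b18}, whose proofs only use the cancellation lemma and therefore hold over $\mathbb{k}$.

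\textbf{Step 3 (compose).} Let $F_M$ be the composite of these elementary maps. Taking the $E^2$ page is functorial on filtered maps, so $E^2(F_M)$ is the composite of the $E^2$ maps of the pieces. That is the composite of the elementary $Kh_\bullet$ maps, which equals $Kh_\bullet(M)$. So $F_M$ agrees with $Kh_\bullet(M)$ on $E^2$, which is exactly functoriality.

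As a remark, Lemma \ref{fg} then shows that the induced map on every $E^n$ with $n\ge 2$ depends only on $Kh_\bullet(M)$. So equivalent movies give the same maps on all pages, in line with the second half of the theorem.

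The only place needing care is births and deaths. The collapse of the unlink spectral sequence at $E^2$ is essential there, since it is what allows a chain-level map into or out of $\mathbb{k}$. The remaining steps are formal composition.
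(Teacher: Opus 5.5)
Your overall route is the paper's. There, the corollary is read off from three ingredients: the preceding theorem for Reidemeister moves, axioms (1)--(2) of Definition~\ref{KFT} for planar isotopies and saddles, and the disjoint-union, unlink-collapse and empty-link axioms for births and deaths. Your Step 3 and the closing remark via Lemma~\ref{fg} are exactly right.

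The one step that does not go through as written is the birth. A birth requires a filtered chain map $\mathcal{A}(\mathcal{D})\to\mathcal{A}(\mathcal{D}\sqcup U)$, but both tools you invoke point the other way. Lemma~\ref{lem:collase} gives $\pi\colon\mathcal{A}(U)\to(E^2(U),0)$, and axiom (3) gives $\mathcal{A}(\mathcal{D}\sqcup U)\to\mathcal{A}(\mathcal{D})\otimes\mathcal{A}(U)$. An identification ``on all pages'' is not a chain map you can compose in Step 3. Deaths are fine: compose the axiom (3) map, $\mathrm{id}\otimes\pi$ and $\mathrm{id}\otimes\epsilon$. For births, however, you also need a filtered section $(E^2(U),0)\to\mathcal{A}(U)$, and a chain-level map $\mathcal{A}(\mathcal{D})\otimes\mathcal{A}(U)\to\mathcal{A}(\mathcal{D}\sqcup U)$ inverting the axiom (3) map on $E^2$.

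The missing fact is that, over a field, maps of $V$-complexes agreeing with $\mathrm{Id}_V$ can be reversed. Apply Lemma~\ref{lem:cancel} repeatedly to the components of the differential that raise filtration by exactly $0$, and then by exactly $1$. This yields a complex $C_{(2)}$ whose differential components all raise filtration by at least $2$. It comes with filtered chain maps $\iota\colon C_{(2)}\to C$ and $\rho\colon C\to C_{(2)}$ satisfying $\rho\iota=\mathrm{id}$, and both are isomorphisms on $E^2$. Indeed, the cancellation maps are filtered, and only the homotopy $\iota\rho\simeq\mathrm{id}$ lowers filtration, by at most one, which still forces agreement on $E^2$.

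On such a $C_{(2)}$ the $E^2$ page is the associated graded. So a filtered map between two such complexes that is an isomorphism on $E^2$ is a filtered isomorphism. Composing with $\iota$ and $\rho$, every $E^2$-quasi-isomorphism acquires a filtered quasi-inverse; in particular the axiom (3) map does. For the unlink, collapse at $E^2$ forces the differential of $C_{(2)}$ to vanish, so $C_{(2)}\cong(E^2(U),0)$ and $\iota$ is the section you need.

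The paper uses the same fact tacitly when it invokes ``the connect sum map and its inverse'' and places $\mathcal{A}(U)$ in the quasi-isomorphism class of $(E^2(U),0)$. It is also what your fallback to \cite{b18} would really be supplying. With it made explicit, your proof matches the paper's.
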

\begin{Cor}
    Every page of a KFT defines a functor  \\
\[
\begin{Bmatrix}
\textrm{link embeddings in oriented } \mathbb{R}^3
\\
\textrm{link cobordisms in oriented }
Y \cong \R^3\times \mathcal{I} 
\\
\textrm{ up to isotopy rel } \partial
\end{Bmatrix}
\xrightarrow{Z}
\begin{Bmatrix}
\ \mathbb{k}\textrm{-vector spaces} 
\\
\ ( \textrm{possibly} \ \mathbb{Z}\textrm{-graded, } \Gamma\textrm{-graded/filtered }  
\\
\   \ \mathbb{Z}\textrm{-pres., } \Gamma\textrm{-hom./filt.} R\textrm{-morphisms})
\end{Bmatrix}
\]
\end{Cor}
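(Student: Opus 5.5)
The corollary follows once we have two things: a well-defined chain-level map for every movie, which is the functoriality statement of the previous corollary, and the previous theorem saying that equivalent movies give the same map on every page $E^n$, $n\ge 2$. The plan is to fix a page $n\geq 2$ and define the functor directly on objects and morphisms.

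\textbf{Objects.} For a link $L\subset\mathbb{R}^3$, choose a diagram $\mathcal{D}$ of $L$ and set $Z(L)=E^n(\mathcal{A}(\mathcal{D}))$. If $\mathcal{D}'$ is another diagram of $L$, connect the two by a sequence of Reidemeister moves and planar isotopies. Each such move gives a map of $Kh_\bullet$-complexes that agrees on $E^2$ with an isomorphism. By Lemma \ref{ij} it is then an isomorphism on $E^n$.

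Two different sequences of moves form equivalent movies for the identity cobordism. By the previous theorem together with Lemma \ref{fg}, they induce the same map on $E^n$. So the system $\{E^n(\mathcal{A}(\mathcal{D}))\}_{\mathcal{D}}$ is a transitive system of isomorphisms, and we define $Z(L)$ as its limit, or equivalently fix one representative diagram.

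\textbf{Morphisms.} Let $\Sigma\subset\mathbb{R}^3\times I$ be a cobordism, taken up to isotopy rel boundary. Present it by a movie $M$, that is, a sequence of Reidemeister moves, births and deaths, and saddles. Set $Z(\Sigma)=E^n(F_M)$, conjugated by the identifications above. Well-definedness follows in three steps:
\begin{enumerate}
\item Isotopic cobordisms are related by Carter--Saito movie moves.
\item By the previous theorem and Lemma \ref{fg}, equivalent movies agree on $E^n$, because they already agree on $E^2$ by functoriality of $Kh_\bullet$.
\item Dots are handled by relation \eqref{eq:dotdef}, which uses characteristic zero.
\end{enumerate}
Functoriality itself is easy. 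Concatenation of movies corresponds to composition of chain maps, and $E^n$ is a functor on filtered complexes of degree-zero maps.

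\textbf{Gradings and monoidality.} The homological $\mathbb{Z}$-grading is preserved because the chain maps have fixed homological degree. The $\Gamma$-grading or filtration is inherited from $Kh_\bullet$ through the isomorphism $q$ and the induced filtration. The monoidal axioms of Definition~\ref{def:TQFT} come from the remaining KFT axioms:
\begin{itemize}
\item The disjoint union map in axiom (3) is an isomorphism on $E^2$, hence on $E^n$ by Lemma \ref{ij}. Over a field, the Künneth formula identifies $E^n$ of a tensor product with the tensor product of the $E^n$ pages.
\item Axiom (5) gives $Z(\emptyset)=\mathbb{k}$.
\end{itemize}

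\textbf{Main obstacle.} The delicate step is the Künneth identification on higher pages: one must check that $E^n(C\otimes C')\cong E^n(C)\otimes E^n(C')$ naturally. I would prove this with the cancellation-lemma description of the spectral sequence. Cancel in $C$ and in $C'$ separately. The filtration-shift-$(i-1)$ part of the tensor-product differential is $d_{(i-1)}\otimes 1 \pm 1\otimes d'_{(i-1)}$, so cancelling in each factor realizes the cancellation in the tensor product. This works degreewise because over a field every filtered complex splits into elementary pieces.
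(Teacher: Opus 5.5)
Your proposal is correct and is essentially the paper's argument. Each movie gives a filtered chain map agreeing with $Kh_{\bullet}$ on $E^2$, so Lemma \ref{fg} carries well-definedness to every page and Lemma \ref{ij} makes the Reidemeister maps isomorphisms. Independence of the choice of Reidemeister sequence really rests on the two sequences inducing the same map on $Kh_{\bullet}$, not on their being equivalent movies. The monoidality/Künneth part goes beyond what this corollary asserts and what the paper proves here, but your over-a-field splitting argument for $E^n(C\otimes C')\cong E^n(C)\otimes E^n(C')$ is sound.
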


Before proceeding, we define the action of the braid group $B_n$ on a given TQFT $Z$ of a cable of a knot $K$ as follows: \par
First, let us familiarize ourselves with the \textbf{tangle notation for cobordisms between cables of a knot $K$}.  The process is depicted in the figure \ref{fig:st}. First, cross the tangle diagram with a circle. Hence, a tangle in $D^2 \times I $ induces a cobordism between links in a solid torus, which, by using the framing of a knot $K$, induces a cobordism between cables. Finally, let $[\sigma_i]_n$ denote the map induced by the cobordism on the n-th page.
For a generator $\sigma_i$ of the braid group switching adjacent strands $(i)$ and $(i+1)$, consider the cobordism as depicted (on the right side of the arrow) in the figure\ref{fig:st}. One can imagine this cobordism to be ``switching" $(i)-$th and $(i+1)-$strands of the cable. 

\begin{figure}
\includegraphics[height=2in]{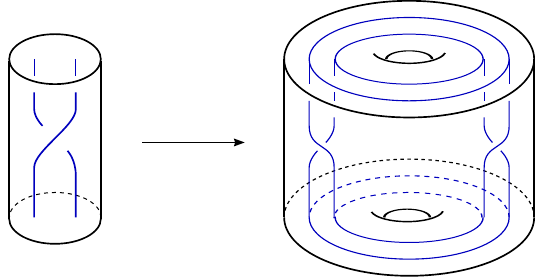}
\caption{\label{fig:st}Schematic depiction of considering tangle $\sigma_i$ (left) and then using revolution to get a cobordism (right) on the cable.}
\end{figure}

We also consider $Z(i)$: the cobordism that forms an annulus between the i-th and the i+1-th strand. Let $Z^{r}(i)$ be its reverse cobordism. In tangle notation depicted in figure \ref{fig:st}, it is given by a cup and a cap.

\begin{figure}
\centerline{\includegraphics[height=1.4in]{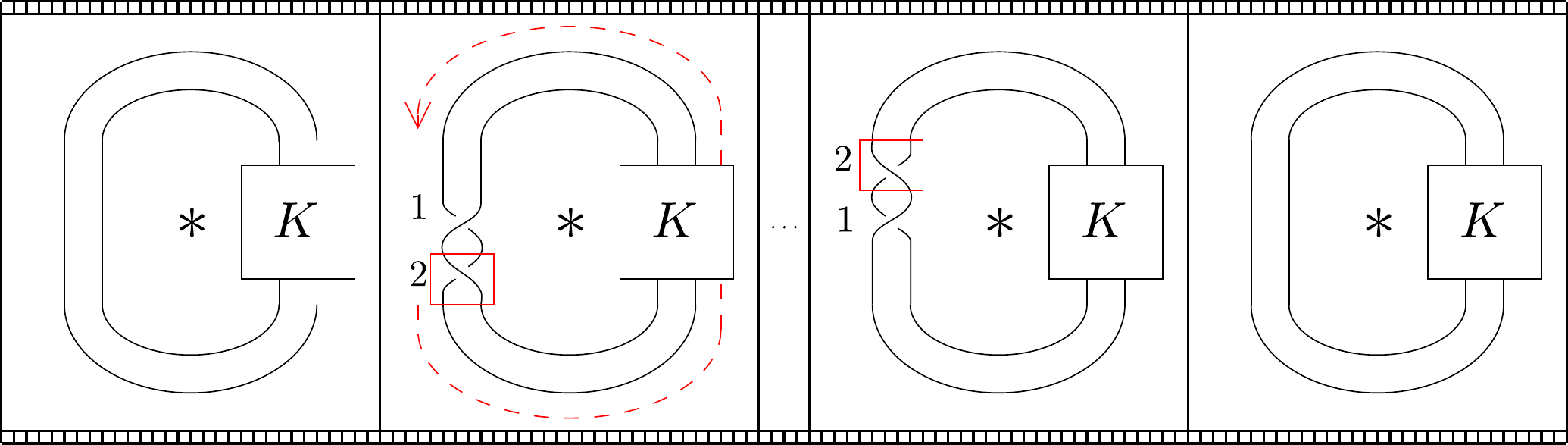}}
\caption{\label{fig:Braid_Movie_Knot}Movie moves of the cobordism depicted in figure \ref{fig:st}}
\end{figure}
Now, we list all the desired properties of Khovanov-Floer theories as Theorem \ref{thm:properties}. This will let us define skein modules for each page of a KFT, as well as provide us with properties to construct spectral sequences of skein modules.
\begin{Thm} \label{thm:properties} Khovanov-Floer Theories satisfy the following properties
\begin{enumerate}  
   
    \item  The map $E^{n}(Z(i))$ is injective and $E^{n}(Z^{r}(i))$ is surjective.  \label{injsurj}
    \item Let $[\sigma_i]_n$ be the map induced on the nth page, by the cobordism given by an element of the braid group that switches the $i$ and $ i+1$st strand. 
    Then 
    \begin{equation}
    [\sigma_{i}]_{n}(v)= v - E^{n}(Z(i))\circ E^{n}(Z^{r}(i))(v) =[{\sigma_{i}^{-1}}]_{n}   \label{eq:braid}
    \end{equation}
    \item For every $n$, $Kh_{\bullet}(\emptyset)=E^{n}(\emptyset)=\mathbb{k}$, $Kh_{\bullet}(U)=E^{n}(U)= \mathbb{k}[X]/(X^2)$. 
    \item   The following cobordisms induce the same map on each page $E^n$, hence the same as $Kh_{\bullet}$ \label{disc}
\begin{equation} 
	\begin{tikzpicture} [fill opacity=0.2,anchorbase, scale=.375]
	\path[fill=blue, opacity=.2] (1,0) arc[start angle=0, end angle=180,x radius=1,y radius=.5] 
		to (-1,4) arc[start angle=180, end angle=0,x radius=1,y radius=.5] to (1,0);
	\path[fill=blue, opacity=.2] (1,0) arc[start angle=360, end angle=180,x radius=1,y radius=.5] 
		to (-1,4) arc[start angle=180, end angle=360,x radius=1,y radius=.5] to (1,0);
	\draw [very thick] (0,4) ellipse (1 and 0.5);
	\draw [very thick] (0,0) ellipse (1 and 0.5);
	\draw[very thick] (1,4) -- (1,0);
	\draw[very thick] (-1,4) -- (-1,0);
\end{tikzpicture}
\,\ \ \ \ 
\begin{tikzpicture} [fill opacity=0.2,anchorbase, scale=.375,rotate=180]
	\path[fill=blue,opacity=.2] (1,4) arc[start angle=0, end angle=180,x radius=1,y radius=.5] 
		to [out=270,in=180] (0,2.5) to [out=0,in=270] (1,4);
	\path[fill=blue,opacity=.2] (1,4) arc[start angle=360, end angle=180,x radius=1,y radius=.5] 
		to [out=270,in=180] (0,2.5) to [out=0,in=270] (1,4);
	\draw[very thick] (0,4) ellipse (1 and 0.5);
	\draw[very thick] (-1,4) to [out=270,in=180] (0,2.5) to [out=0,in=270] (1,4);
    \end{tikzpicture}
    \,\ \ \ \ 
    \begin{tikzpicture} [fill opacity=0.2,anchorbase, scale=.375,rotate=180]
	\path[fill=blue,opacity=.2] (1,0) arc[start angle=0, end angle=180,x radius=1,y radius=.5] 
		to [out=90,in=180] (0,1.5) to [out=0,in=90] (1,0);
	\path[fill=blue,opacity=.2] (1,0) arc[start angle=360, end angle=180,x radius=1,y radius=.5] 
		to [out=90,in=180] (0,1.5) to [out=0,in=90] (1,0);
	\draw[very thick] (0,0) ellipse (1 and 0.5);
	\draw[very thick] (-1,0) to [out=90,in=180] (0,1.5) to [out=0,in=90] (1,0);
	\node[opacity=1] at (0,1) {\footnotesize$\bullet$};
\end{tikzpicture}
\end{equation}
    \item Each $E^n$, satisfies the relations given by the Figure \ref{fig:cobordismrelations}.

\end{enumerate}
\end{Thm}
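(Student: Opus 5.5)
The strategy is to prove every property on the $E^2$ page, where $E^2 = Kh_{\bullet}$, and then push it to all higher pages. The page maps $E^n(\Sigma)$ are induced by filtered chain maps that agree with $Kh_{\bullet}(\Sigma)$ on $E^2$. So Lemma \ref{fg} upgrades any identity between compositions of cobordism maps that holds on $E^2$ to an identity on every $E^n$. Similarly, Lemma \ref{ij} upgrades isomorphisms. Concretely, for each relation among cobordisms I will realize both sides as filtered chain maps of $Kh_{\bullet}$-complexes, using the functoriality corollary. I will then observe that their $E^2$ components coincide because the relation holds in the chosen functorial Khovanov theory over $\mathbb{k}$, and conclude equality on all pages. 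The one wrinkle is linear relations such as (\ref{eq:braid}), which involve sums of maps. Lemma \ref{fg} applies verbatim to $f = \sigma_i$ and $g = \mathrm{Id} - Z(i)\circ Z^r(i)$, since the latter is still a filtered chain map and $E^n$ is additive on filtered chain maps.

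For item (3), I will use axiom (5) for the empty link. For the unknot I will use axiom (4): the spectral sequence of an unlink collapses at $E^2$, so $E^n(U) = E^2(U) = Kh_{\bullet}(U) = \mathbb{k}[X]/(X^2)$. For item (4) and the relations in Figure \ref{fig:cobordismrelations} (sphere, neck-cutting, dot relations), all the links involved are unlinks. By Lemma \ref{lem:collase}, the complexes are quasi-isomorphic to $(E^2,0)$, so the maps on $E^n$ literally equal the $Kh_{\bullet}$ maps, and the relations transfer directly. For neck-cutting I will also use axiom (3) (disjoint union), so that tensor decompositions are compatible across pages. The identity $[\sigma_i]_n = [\sigma_i^{-1}]_n$ in item (2) follows by the same transfer from the corresponding identity in $Kh_{\bullet}$ for oppositely oriented adjacent cable strands, as in \cite{mn22} and \cite{rw24}; here characteristic zero matters, since the dot is defined via $\tfrac12$ in (\ref{eq:dotdef}).

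The main obstacle is item (1). Injectivity and surjectivity are not equalities of maps, so Lemma \ref{fg} does not directly apply. I would get them from a one-sided inverse. On $Kh_{\bullet}$ there is a cobordism $W$ (e.g.\ $Z^r(i)$ composed with a suitable dotted/undotted modification, as used in \cite{mn22}) with $Kh_{\bullet}(W)\circ Kh_{\bullet}(Z(i)) = \mathrm{Id}$. This follows from the neck-cutting and sphere relations: capping the annulus produces a dotted cylinder summand, which evaluates to the identity up to the invertible factor $2$. Transferring this composite identity to $E^n$ via Lemma \ref{fg} shows that $E^n(W)\circ E^n(Z(i)) = \mathrm{Id}$, so $E^n(Z(i))$ is injective. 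Dually, a right inverse for $Z^r(i)$ gives surjectivity. The delicate point is checking that the required relations hold in the chosen functorial theory with exactly the right coefficients and gradings, especially the orientation conventions of the annulus between antiparallel strands. I would verify that first, in the local model of the cable in a solid torus.
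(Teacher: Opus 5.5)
Your proposal is correct and is essentially the paper's argument. Each property is checked on $E^2 = Kh_{\bullet}$ and transferred to all pages via Lemmas \ref{fg} and \ref{ij}, with collapse at $E^2$ handling the unlink cobordisms; for item (1), the one-sided inverse you leave vague is just $\tfrac12 Z^r(i)$, because $Z^r(i)\circ Z(i)$ is a torus that $Kh_{\bullet}$ evaluates to $2\,\mathrm{Id}$, which is exactly the paper's computation (the paper cites Lemma \ref{ij} there, which suffices since only invertibility of the composite on each page is needed).
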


\begin{equation} \label{fig:cobordismrelations}
	\begin{tikzpicture} [fill opacity=0.2,anchorbase, scale=.375]
	\path[fill=blue, opacity=.2] (1,0) arc[start angle=0, end angle=180,x radius=1,y radius=.5] 
		to (-1,4) arc[start angle=180, end angle=0,x radius=1,y radius=.5] to (1,0);
	\path[fill=blue, opacity=.2] (1,0) arc[start angle=360, end angle=180,x radius=1,y radius=.5] 
		to (-1,4) arc[start angle=180, end angle=360,x radius=1,y radius=.5] to (1,0);
	\draw [very thick] (0,4) ellipse (1 and 0.5);
	\draw [very thick] (0,0) ellipse (1 and 0.5);
	\draw[very thick] (1,4) -- (1,0);
	\draw[very thick] (-1,4) -- (-1,0);
\end{tikzpicture}
\, = \,
\begin{tikzpicture} [fill opacity=0.2,anchorbase, scale=.375,rotate=180]
	\path[fill=blue,opacity=.2] (1,4) arc[start angle=0, end angle=180,x radius=1,y radius=.5] 
		to [out=270,in=180] (0,2.5) to [out=0,in=270] (1,4);
	\path[fill=blue,opacity=.2] (1,4) arc[start angle=360, end angle=180,x radius=1,y radius=.5] 
		to [out=270,in=180] (0,2.5) to [out=0,in=270] (1,4);
	\draw[very thick] (0,4) ellipse (1 and 0.5);
	\draw[very thick] (-1,4) to [out=270,in=180] (0,2.5) to [out=0,in=270] (1,4);
	\path[fill=blue,opacity=.2] (1,0) arc[start angle=0, end angle=180,x radius=1,y radius=.5] 
		to [out=90,in=180] (0,1.5) to [out=0,in=90] (1,0);
	\path[fill=blue,opacity=.2] (1,0) arc[start angle=360, end angle=180,x radius=1,y radius=.5] 
		to [out=90,in=180] (0,1.5) to [out=0,in=90] (1,0);
	\draw[very thick] (0,0) ellipse (1 and 0.5);
	\draw[very thick] (-1,0) to [out=90,in=180] (0,1.5) to [out=0,in=90] (1,0);
	\node[opacity=1] at (0,1) {\footnotesize$\bullet$};
\end{tikzpicture}
+
\begin{tikzpicture} [fill opacity=0.2,anchorbase, scale=.375]
	\path[fill=blue,opacity=.2] (1,4) arc[start angle=0, end angle=180,x radius=1,y radius=.5] 
		to [out=270,in=180] (0,2.5) to [out=0,in=270] (1,4);
	\path[fill=blue,opacity=.2] (1,4) arc[start angle=360, end angle=180,x radius=1,y radius=.5] 
		to [out=270,in=180] (0,2.5) to [out=0,in=270] (1,4);
	\draw[very thick] (0,4) ellipse (1 and 0.5);
	\draw[very thick] (-1,4) to [out=270,in=180] (0,2.5) to [out=0,in=270] (1,4);
	\path[fill=blue,opacity=.2] (1,0) arc[start angle=0, end angle=180,x radius=1,y radius=.5] 
		to [out=90,in=180] (0,1.5) to [out=0,in=90] (1,0);
	\path[fill=blue,opacity=.2] (1,0) arc[start angle=360, end angle=180,x radius=1,y radius=.5] 
		to [out=90,in=180] (0,1.5) to [out=0,in=90] (1,0);
	\draw[very thick] (0,0) ellipse (1 and 0.5);
	\draw[very thick] (-1,0) to [out=90,in=180] (0,1.5) to [out=0,in=90] (1,0);
	\node[opacity=1] at (0,1) {\footnotesize$\bullet$};
\end{tikzpicture}
\, , \quad
\begin{tikzpicture}[anchorbase, scale=.375]
	\path [fill=blue,opacity=0.3] (0,0) circle (1);
	\draw (-1,0) .. controls (-1,-.4) and (1,-.4) .. (1,0);
	\draw[dashed] (-1,0) .. controls (-1,.4) and (1,.4) .. (1,0);
	\draw[very thick] (0,0) circle (1);
\end{tikzpicture}
= 0
\, , \quad
\begin{tikzpicture}[anchorbase, scale=.375]
	\path [fill=blue,opacity=0.3] (0,0) circle (1);
	\draw (-1,0) .. controls (-1,-.4) and (1,-.4) .. (1,0);
	\draw[dashed] (-1,0) .. controls (-1,.4) and (1,.4) .. (1,0);
	\draw[very thick] (0,0) circle (1);
	\node at (0,0.6) {\footnotesize$\bullet$};
\end{tikzpicture}
= 1
\, , \quad
\begin{tikzpicture}[fill opacity=.3, scale=.5, anchorbase]
	\filldraw [very thick,fill=blue] (-1,-1) rectangle (1,1);
	\node [opacity=1] at (0,-.25) {$\bullet$};
	\node [opacity=1] at (0,.25) {$\bullet$};
	\end{tikzpicture}
= 0 \, .
\end{equation}

\begin{proof}
    First, to prove \ref{injsurj}, note that 
    \begin{equation*}
        E^{2}(Z^{r}(i))\circ E^{2}(Z(i))= Kh_{\bullet}(Torus)=2 \ \text{Id}_{\mathbb{k}}
    \end{equation*}\footnote{It is important for our computation that the torus, when considered as a cobordism, induces an isomorphism on the TQFT. The actual isomorphism varies depending on which functorial version of Khovanov homology one uses.}
    Hence, by Lemma \ref{ij}, we have for every $n$,
    \begin{equation*}
        E^{n}(Z^{r}(i))\circ E^{n}(Z(i))= 2\mathbb{I}
    \end{equation*} 
    which is an isomorphism over $\mathbb{k}$ (since 2=1+1 $\in \mathbb{k}$ is invertible). Hence $E^{n}(Z^{r}(i))$ is surjective, while $E^{n}(Z(i)$ is injective.

Equation\ref{eq:braid} follows by considering the functorial morphism a KFT induces on its spectral sequence; combined with the fact that the equation is true for $Kh_{\bullet}$, again by Lemma \ref{ij}. This was originally proved in \cite{Grigsby_2017} for the annular Khovanov homology. We give an outline of the proof below in Theorem\ref{braidgrp}. 
Finally, to prove \ref{disc}, we use the last two axioms of a KFT \ref{KFT}, hence for the mentioned maps, the spectral sequence collapses on the second page, and the claim follows. Every equation in Figure \ref{fig:cobordismrelations} is satisfied by $Kh_{\bullet}$, hence by Lemma \ref{ij} every $E^n$. 
\end{proof}

\begin{Thm}{(\cite{Grigsby_2017}, Proposition 9)} \label{braidgrp}
 \begin{equation}
    [\sigma_{i}]_{*}(v)= v \pm Kh_{\bullet} (Z(i))\circ Kh_{\bullet} (Z^{r}(i))(v) =[{\sigma_{i}^{-1}}]_{*}(v) \footnote{The sign on the RHS in this formula changes according to which functorial version of the Khovanov homology one uses. Consistency is more important. In our convention, the torus cobordism represents multiplication by "2", hence our sign is negative. One would get a positive sign if $KhR_2$ is used.}
 \end{equation} 

\end{Thm}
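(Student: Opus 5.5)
The plan is a direct local computation in the Khovanov homology of the cable, in the $(i,i+1)$ strands only; everything else is carried along by the identity. Realize $[\sigma_i]$ as the cobordism swept out by the crossing tangle $\sigma_i\times S^1$ (Figure \ref{fig:st}). I would decompose it using the movie of Figure \ref{fig:Braid_Movie_Knot}: a saddle merging strands $i$ and $i+1$, a Reidemeister/isotopy rotation, then a second saddle splitting them again. The composite of these two saddles, read in the tangle picture, is a tube joining the two parallel annuli $A_i,A_{i+1}$. Applying the neck-cutting relation of Figure \ref{fig:cobordismrelations} (which holds for $Kh_\bullet$) to this tube writes the map as a sum of two terms. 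Each term caps off along $A_i\cup A_{i+1}$ with a dot on one side. This is exactly the composite $Z(i)\circ Z^{r}(i)$ modified by a dot, together with an identity-cobordism term.

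Concretely, I would first establish a local identity in the cobordism category modulo the relations of Figure \ref{fig:cobordismrelations}:
\[
[\sigma_i\times S^1]=\mathrm{Id}-c\,Z(i)\circ Z^{r}(i),
\]
where $c=\pm1$ depends on the normalization of the torus. For this I would use the dot relation (\ref{eq:dotdef}), which replaces a dot by $\tfrac12$ times a handle and needs $1/2\in\mathbb{k}$. The dot terms on the two sides then combine into the undotted annulus composite $Z(i)\circ Z^{r}(i)$, with the torus evaluating to $2$. Next, I would check the sign against the paper's convention: the torus equals multiplication by $2$, and $E^2(Z^r(i))E^2(Z(i))=2\,\mathrm{Id}$ as in Theorem \ref{thm:properties}. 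With this convention, $P=\tfrac12 Z(i)Z^r(i)$ is an idempotent, and $\mathrm{Id}-2P$ is an involution. The involution property immediately gives $[\sigma_i]=[\sigma_i^{-1}]$, since $(\mathrm{Id}-2P)^2=\mathrm{Id}$ and $[\sigma_i][\sigma_i^{-1}]=\mathrm{Id}$ by functoriality (a Reidemeister II movie).

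The main obstacle is the first step: identifying the swept crossing with a tube-plus-saddles surface up to isotopy rel boundary in $S^1\times D^2\times I$, and then pushing into $S^3\times I$ via the framing. I would handle it by viewing the annulus swept by each strand as a graph of a function. The crossing changes only the isotopy class relative to the torus direction, so the surface is isotopic to the two annuli joined by a tube that rotates once. The rotation is trivial up to the sweep-around and isotopy, because we are in $S^3$ and functoriality there is assumed for $Kh_\bullet$. If this turns out to be subtle, I would instead cite Grigsby–Licata–Wehrli's computation in annular Khovanov homology and push forward along the annulus-to-$S^3$ inclusion, which is a functor of cobordism categories. The sign ambiguity noted in the footnote would then be fixed by the torus-equals-$2$ check above.
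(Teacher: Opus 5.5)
\textbf{The first step fails.} The swept crossing $\sigma_i\times S^1$ is the disjoint union of the two annuli traced out by the two arcs of $\sigma_i$. It has $\chi=0$ and no critical points. Its movie (Figure~\ref{fig:Braid_Movie_Knot}) consists only of Reidemeister moves: a Reidemeister~II move creating $\sigma_i\sigma_i^{-1}$, Reidemeister~III moves dragging one crossing once around the cable, and a Reidemeister~II move cancelling the pair. It contains no saddles.

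A merge saddle followed by a split saddle (equivalently, $A_i$ and $A_{i+1}$ joined by a tube) is a four-holed sphere with $\chi=-2$. So no isotopy rel boundary to $\sigma_i\times S^1$ exists; the two surfaces are not even homeomorphic. The decomposition also has the wrong quantum degree. Up to an overall sign convention, a cobordism map has $q$-degree $\chi(S)-2\,\#\text{dots}$. Thus $[\sigma_i]$, $\mathrm{Id}$ and $Kh_{\bullet}(Z(i))\circ Kh_{\bullet}(Z^{r}(i))$ all have degree $0$. The tubed surface, and its neck-cut expansion $Z(i)\circ \dot Z^{r}(i)+\dot Z(i)\circ Z^{r}(i)$, have degree $-2$.

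For the same reason, no identity term can come out of neck-cutting. The dot relation (\ref{eq:dotdef}) only rewrites a dot as half a handle, which has the same degree $-2$, so it cannot turn dotted annuli into the undotted composite $Z(i)\circ Z^{r}(i)$. Your ``local identity'' $[\sigma_i\times S^1]=\mathrm{Id}-c\,Z(i)\circ Z^{r}(i)$ is therefore asserted, not derived, and it is the entire content of the theorem.

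\textbf{What is missing is the chain-level computation} that the paper sketches, following Grigsby--Licata--Wehrli, with its resolution diagram:
\begin{itemize}
\item The first Reidemeister~II map is $\mathrm{id}\oplus f$. One component lands in the summand whose resolution is isotopic to the original cable; $f$ lands in the summand containing the cup--cap resolution.
\item Each Reidemeister~III map sliding the crossing around is the identity on the first summand and $\psi_j$ on the second, with cross terms $\nu_j$ from the second summand to the first.
\item The last Reidemeister~II map is $\mathrm{id}+g$.
\end{itemize}
The composite is therefore $\mathrm{id}$ plus terms factoring through the cup--cap summand, and that correction is identified with $\pm Kh_{\bullet}(Z(i))\circ Kh_{\bullet}(Z^{r}(i))$.

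Once the first equality is known, your idempotent argument is correct. With $P=\tfrac12 Z(i)Z^{r}(i)$ and the torus equal to $2$, $\mathrm{Id}-2P$ is an involution, which gives $[\sigma_i]=[\sigma_i^{-1}]$ and fixes the sign.

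\textbf{Your fallback does not close the gap either.} Grigsby--Licata--Wehrli prove the relation in sutured annular Khovanov homology, which is the homology of the associated graded of the annular filtration, not $Kh_{\bullet}$. Functoriality of the inclusion of annular cobordisms into $S^3\times I$ does not transport a relation among maps on sutured annular homology to a relation on $Kh_{\bullet}$. One has to rerun their chain-level argument on the ordinary Khovanov complex, which is what the paper's diagram sketches.
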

\begin{proof}[Sketch of Proof]
    We start by considering the movie moves given by cobordisms corresponding to the elements of the braid group as in\ref{fig:Braid_Movie_Knot}.  
\end{proof}
Note that the ``$\dots$" in the Figure \ref{fig:Braid_Movie_Knot} and the Figure \ref{fig:TL_Movie_Knot} correspond to Reidemeister moves 3 and 2 respectively. 
To prove the claim we consider the following resolution:

\[
\xymatrix@R=-0.05in@C=0.4in{
&*+={\includegraphics[height=0.6in]{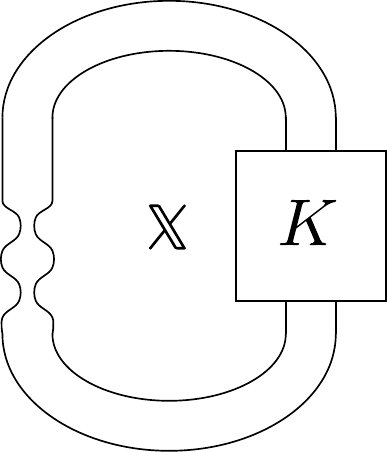}}\ar[r]^{\operatorname{id}}&*+<0.5in>{\cdots}\ar[r]^{\operatorname{id}}&*+={\includegraphics[height=0.6in]{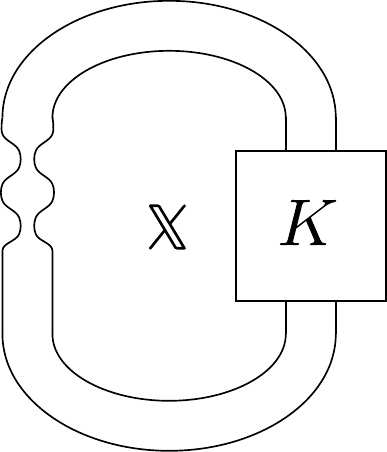}}\ar[dr]^{\operatorname{id}}&\\
*+={\includegraphics[height=0.7in]{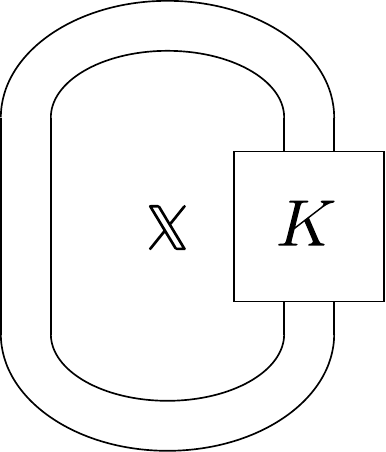}}\ar[ur]^{\operatorname{id}}\ar[dr]_{f}&\oplus&&\oplus&*+={\includegraphics[height=0.6in]{figures/sBraid_Movie_Knot_Chain_Map1n_X.pdf}}\\
&*+={\includegraphics[height=0.6in]{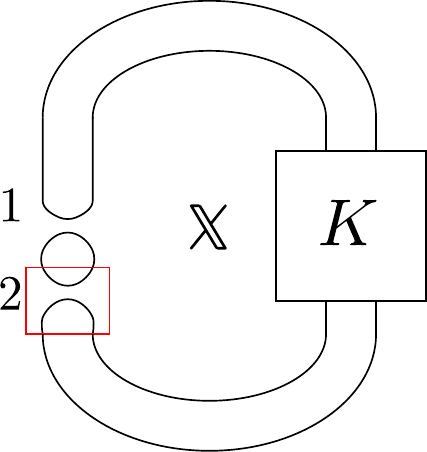}}\ar[r]_{\psi_1}\ar[uur]^{\nu_1}&*+<0.5in>{\cdots}\ar[r]_{\psi_\ell}\ar[uur]^{\nu_\ell}&*+={\includegraphics[height=0.6in]{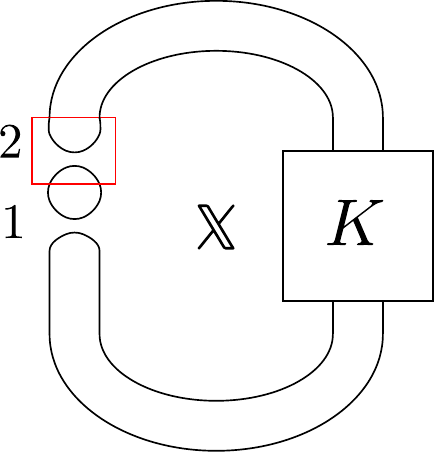}}\ar[ur]_g&
}
\]

\begin{figure}
\centerline{\includegraphics[height=1.4in]{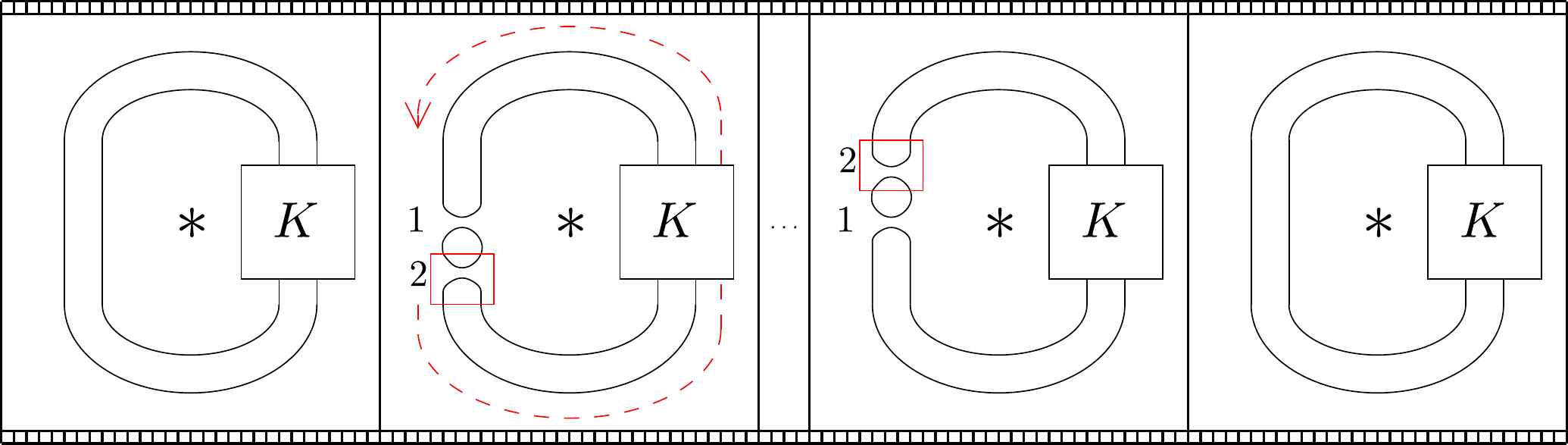}}
\caption{\label{fig:TL_Movie_Knot}Movie for $Z(i)$ followed by $Z(i)^r$}
\end{figure}

\section{Spectral Sequence of Skein Modules}

In this section, we use a functorial spectral sequence associated to a KFT: $\ (E^{n},d_n)$, with $E^{2}=Kh_{\bullet}$ to construct skein lasagna modules corresponding to each page. We will define a differential on these lasagna modules and address the existence of spectral sequences. 
To define lasagna modules, we need a TQFT as in the definition \ref{def:s3tqft}. However, most of the TQFTs we know, including different versions of Khovanov homology, are defined only for links in the standard $S^3$ (more accurately, a fixed copy of a manifold diffeomorphic to $S^3$). However, we need TQFTs defined over an abstract $S^3$, to be able to define skein lasagna modules. This issue is cleverly fixed for Khovanov-Rozansky homologies in \cite{Morrison_2022}. Their construction is stated as the following theorem in \cite{morrison2024invariantssurfacessmooth4manifolds}. This result sets up an axiomatic framework which allows us to jump between a standard and an abstract TQFT.

\begin{Thm}[\cite{morrison2024invariantssurfacessmooth4manifolds}, Theorem 2.1]\label{thm:skeinfromLH} Let $R$ be a commutative ring, optionally graded resp. filtered by
an abelian group $\Gamma$. Given a functorial \emph{link homology theory} for
links in $\R^3$ with values in $R$, i.e. a functor:
\[
\begin{Bmatrix}
\textrm{link embeddings in oriented } \R^3
\\
\textrm{link cobordisms in oriented }
Y \cong \R^3\times \mathcal{I} \textrm{ up to isotopy rel } \partial
\end{Bmatrix}
\xrightarrow{Z}
\begin{Bmatrix}
\Z\textrm{-graded, } \Gamma\textrm{-graded/filtered }  R\textrm{-modules}
\\
\Z\textrm{-pres., } \Gamma\textrm{-hom./filt.} R\textrm{-morphisms}
\end{Bmatrix}
\]
which additionally
\begin{enumerate}
  \item is (lax) monoidal under disjoint union: $Z(L_1)\otimes_R Z(L_2)\to Z(L_1 \sqcup L_2)$, and
  \item satisfies the sweep-around move , and
  \item the trace of the $2\pi$ rotation of $\R^3$, which generates
  $\pi_1(SO(3))$, acts by the identity on $H$, 
\end{enumerate}
then $H$ extends to:
\begin{itemize}
  \item a link homology for links in $3$-spheres (as in definitions \ref{def:s3tqft} and \ref{def:TQFTextends})  with
  values in $\Z$-graded, $\Gamma$-graded/filtered $R$-modules,
  \item an algebra for the lasagna operad  valued in
  $\Z$-graded, $\Gamma$-graded/filtered $R$-modules,
  \item a $(4+\epsilon)$-dimensional TQFT of skein modules associated to pairs
  $(W,L)$ of oriented $4$-manifolds $W$ with links $L\subset \partial W$, valued
  in  $\Z$-graded, $\Gamma$-graded/filtered $R$-modules.
\end{itemize}
\end{Thm}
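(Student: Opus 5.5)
The plan is to build the three structures in order, each from the previous one. The first step, extending $Z$ from links in $\R^3$ to links in an abstract $S^3$, carries almost all of the content. The other two reduce to it together with the lax monoidal structure.

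\textbf{Step 1: extension to an abstract $S^3$.} For an oriented $S\cong S^3$ and a link $L\subset S$, consider pairs $(p,\phi)$, where $p\in S\setminus L$ and $\phi\colon S\setminus\{p\}\to\R^3$ is an orientation-preserving diffeomorphism. Each such pair gives a module $Z(\phi(L))$.
\begin{itemize}
\item A path of such pairs is an ambient isotopy of $\R^3$ carrying $\phi(L)$ to $\phi'(L)$. Its trace is a cobordism and hence an isomorphism $Z(\phi(L))\to Z(\phi'(L))$.
\item Isotopic paths give equal maps, because $Z$ is invariant under isotopy rel boundary.
\item So we get a functor from the fundamental groupoid of the space $\mathcal{P}(S,L)$ of pairs to $R$-modules.
\end{itemize}
I then define $\overline{Z}(L)$ as the limit of this functor. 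The limit is canonically isomorphic to each $Z(\phi(L))$ as soon as the functor has trivial monodromy.

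The space $\mathcal{P}(S,L)$ is connected. Its fundamental group is generated by two kinds of loops:
\begin{itemize}
\item loops in which $p$ moves around a component of $L$ (equivalently, a strand of $L$ is swept across the point at infinity);
\item loops coming from $\pi_1$ of the orientation-preserving diffeomorphisms of $\R^3$ fixing the framing at infinity, which is $\pi_1(SO(3))=\Z/2$.
\end{itemize}
The first kind acts by the sweep-around map, which is the identity by hypothesis (2). The second acts by the trace of the $2\pi$ rotation, which is the identity by hypothesis (3). This gives trivial monodromy.

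For a cobordism $\Sigma\subset S\times I$, I would choose an arc $\{p\}\times I$, after isotopy, disjoint from $\Sigma$. This is possible by general position, since a $1$-dimensional arc generically misses a surface in a $4$-manifold. I then apply $Z$ in the resulting $\R^3\times I$. Two such arcs differ by moves of the same two types, so the induced map is well defined. Compatibility with the identifications of Definition \ref{def:TQFTextends} holds by construction.

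\textbf{Step 2: lasagna algebra.} Given a ball $B$ with input balls $B_1,\dots,B_k$ and a surface in $B\setminus\sqcup\, \mathrm{int}(B_i)$, I would choose arcs joining the input balls and tube them together, as in the footnote to \eqref{eq:colim}. This realises the complement as $S^3\times I$. The operation is the composite
\[
\bigotimes_i Z(L_i)\longrightarrow Z\Big(\bigsqcup_i L_i\Big)\longrightarrow Z(\text{outer link}),
\]
where the first map is the lax monoidal structure map of hypothesis (1) and the second is the cobordism map from Step 1. Changing the arcs again changes the map only by sweep-arounds and rotations, so it is independent of the choices. The operad axioms (associativity under nesting of balls and equivariance under relabelling) follow from functoriality of $Z$ and naturality of the monoidal structure.

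\textbf{Step 3: skein modules as a TQFT.} I would define the skein module of $(W,L)$ as the colimit \eqref{eq:colim} over the category of skeins. Diffeomorphisms of pairs act on skeins, so the construction is natural. A gluing $W'=W\cup_{\partial}V$ induces a map by gluing skeins, and using the lasagna algebra of Step 2 this map descends to the colimits. The grading and filtration by $\Gamma$ and the homological $\Z$-grading are inherited throughout, since every map used preserves them.

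The main obstacle is the monodromy computation in Step 1. Concretely, one must show that $\pi_1$ of the space of pairs $(p,\phi)$ is generated exactly by the sweep-around loops and the generator of $\pi_1(SO(3))$, with no further contribution from $\pi_1(\mathrm{Diff}(S^3))$. I would get this from the homotopy equivalence $\mathrm{Diff}^+(S^3)\simeq SO(4)$ (Hatcher's theorem). Fixing a point and a frame there leaves $SO(3)$, and the remaining freedom of moving $p$ in $S\setminus L$ gives the loops around the components of $L$.
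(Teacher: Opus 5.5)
\paragraph{Comparison with the paper.} The paper gives no proof of this statement. It is quoted from \cite{morrison2024invariantssurfacessmooth4manifolds}, which summarizes the construction of \cite{Morrison_2022}. Your outline uses the three hypotheses for the same purposes as that construction:
\begin{itemize}
\item sweep-around removes the dependence on the point at infinity;
\item Hatcher's theorem reduces the dependence on the identification with the standard sphere to $\pi_1 SO(3)\cong\Z/2$, which hypothesis (3) kills;
\item the lasagna operations come from tubing the input balls together and applying the lax monoidal structure.
\end{itemize}
Putting both ambiguities into a single space of pairs $(p,\phi)$, and reading off generators of $\pi_1$ from the fibration over $S\setminus L$, is a clean way to organize this. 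One small slip: the fiber is a torsor for $\mathrm{Diff}^+(S^3,\infty)\simeq SO(3)$, in which the framing at infinity is free. If you also fix the framing, the group is contractible.

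\paragraph{The step that needs an argument.} Compatibility with Definition~\ref{def:TQFTextends} does not hold ``by construction''. That definition quantifies over \emph{all} diffeomorphisms $\Phi\colon Y\times I\to S^3\times I$, whereas your cobordism maps are defined using a product structure and vertical arcs. Correct $\Phi$ by the level-preserving diffeomorphism coming from an isotopy between its two end restrictions (Cerf). You then still have to show $Z(\Psi(\Sigma))=Z(\Sigma)$ for every $\Psi\in\mathrm{Diff}_\partial(S^3\times I)$. You cannot settle this by quoting a computation of $\pi_0$, since $\pi_0\mathrm{Diff}_\partial(D^4)$ is unknown.

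What works is the following.
\begin{itemize}
\item Use the restriction fibration $\mathrm{Diff}_\partial(S^3\times I)\to\mathrm{Diff}_\partial(D^4)\to\mathrm{Emb}(D^4,\mathrm{int}\,D^4)\simeq SO(4)$.
\item Every element of $\mathrm{Diff}_\partial(D^4)$ is isotopic rel boundary, by conjugating with radial shrinking, to one supported in an arbitrarily small ball.
\item Together these give $\Psi\simeq\tau^{\epsilon}\circ\Psi_B$ rel $\partial$ with $\epsilon\in\{0,1\}$. Here $\tau(x,s)=(\rho_{\beta(s)}x,s)$ for a loop $\rho$ generating $\pi_1 SO(4)$ and a cutoff function $\beta$, and $\Psi_B$ is supported in a small ball disjoint from $\Sigma$.
\item Hence $\Psi(\Sigma)$ is isotopic rel boundary to $\tau^{\epsilon}(\Sigma)$. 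This is $\Sigma$ precomposed with the trace of a $2\pi$ rotation (represent the generator in $SO(3)$, fixing a point off the link), and hypothesis (3) finishes.
\end{itemize}
The same argument makes your Step~2 operations independent of the chosen identification of the tubed complement with $S^3\times I$. It also underlies the naturality under diffeomorphisms that you invoke in Step~3.

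\paragraph{A smaller point.} The claim that ``two arcs differ by moves of the same two types'' needs one more sentence. The complement in $S$ of the projection of $\Sigma$ can be disconnected, so the arc must be pushed across $\Sigma$. Cutting $\Sigma$ at the (generically regular) level of the crossing reduces this to the trivial monodromy you already proved for that level link.
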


We know that on $E^2=Kh_{\bullet}$, the map induced by the sweep-around move gives the identity morphism. Hence, from Lemma \ref{fg} it follows that on every page, the sweep-around move induces the identity morphism.

It follows that for a given Khovanov-Floer theory $\mathcal{A}$, each page of the associated spectral sequence is a  TQFT as in the definition \ref{def:TQFT}. Hence, all the necessary axioms to be able to define a lasagna module are satisfied. Now, we will prove that all these pages satisfy an analogue of the 2-handlebody formula \ref{prop:2_hdby}. Henceforth, we will use the notation $K(r)$ to denote the cable $K(\alpha^{+}+r,\alpha^{-}+r)$ of a framed link $K$.

\begin{Prop} \label{thm:2hdlbdy}
Let $(X,L)$ be a 2-handlebody with a link $L\subset \partial X$ away from the 2-handles and $\alpha \in H_{2}(X,L)$, let $2HC \subset \overline{C(X,L)}$ \footnote{This is where one can see the advantage of working with the quotient category. The braid group morphisms can be defined as self-automorphisms, just as in the case of the cabled Khovanov homology. Intuitively, we are saying that along with the surface of the skein, the input balls are also flexible up to isotopy. } be the following subsystem, 
 \begin{enumerate}
    \item  Skeins of the form $\Sigma_{K(r)}$ made with one input ball with input link $K(\alpha^++\textbf{r},\alpha^-+\textbf{r})$, and the surface made with discs parallel to the cores of handle attachments, along with a copy of $L\times I$. \label{special_filling}
    These are depicted in the Figure \ref{fig:generator} (\cite{mn22}, Figure 1)
    \item Morphisms  \begin{equation}
                          S_{\sigma_{i}}:[\Sigma_{K(r)}] \rightarrow [\Sigma_{K(r)}]
                     \end{equation}
          given by the generators $\sigma_i$ of the braid group on $|\alpha|+2r$ strands. These correspond to the cobordism in \ref{fig:st}.
    \item  Morphisms $S_{Z^{\bullet}(r)}:[\Sigma_{K(r)}] \rightarrow [\Sigma_{K(r+1)}]$ given by adding a dotted annulus between two newly introduced strands
 \end{enumerate} \footnote{Some of the morphisms mentioned above are not really self-automorphisms. They become self-morphisms after post composing with maps in $ G_{\overline{C(X,L)}}$. For now, we abuse the notation for convenience. }
Then the corresponding system $2HC^{conv}$ is pseudo-final in $\overline{C(X,L)}^{conv}$. 

\begin {figure}
\begin {center}
\begin{picture}(0,0)%
\includegraphics{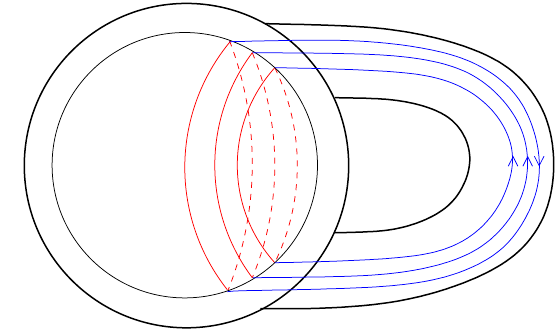}%
\end{picture}%
\setlength{\unitlength}{3158sp}%
\begingroup\makeatletter\ifx\SetFigFont\undefined%
\gdef\SetFigFont#1#2#3#4#5{%
  \reset@font\fontsize{#1}{#2pt}%
  \fontfamily{#3}\fontseries{#4}\fontshape{#5}%
  \selectfont}%
\fi\endgroup%
\begin{picture}(5559,3272)(1753,-7176)
\put(6384,-5903){\makebox(0,0)[lb]{\smash{{\SetFigFont{10}{12.0}{\rmdefault}{\mddefault}{\updefault}{\color[rgb]{0,0,1}$C_j^{i,\pm}$}%
}}}}
\put(6278,-4231){\makebox(0,0)[lb]{\smash{{\SetFigFont{10}{12.0}{\rmdefault}{\mddefault}{\updefault}{\color[rgb]{0,0,0}$2$-handle}%
}}}}
\put(2508,-6201){\makebox(0,0)[lb]{\smash{{\SetFigFont{10}{12.0}{\rmdefault}{\mddefault}{\updefault}{\color[rgb]{0,0,0}$B$}%
}}}}
\put(1768,-6951){\makebox(0,0)[lb]{\smash{{\SetFigFont{10}{12.0}{\rmdefault}{\mddefault}{\updefault}{\color[rgb]{0,0,0}$0$-handle}%
}}}}
\put(3058,-4771){\makebox(0,0)[lb]{\smash{{\SetFigFont{10}{12.0}{\rmdefault}{\mddefault}{\updefault}{\color[rgb]{1,0,0}$K(2,1)$}%
}}}}
\end{picture}%

\caption {Skein of the form $\Sigma_{K(r)}$.}
\label{fig:generator}
\end {center}
\end {figure}

\end{Prop}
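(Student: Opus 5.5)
To prove that $2HC^{conv}$ is pseudo-final in $\overline{C(X,L)}^{conv}$, I would follow the strategy of \cite{mn22} and recast each of its steps as a statement about arrows in the quotient category. First I fix the $F$-invariant equivalence relation on $\overline{C(X,L)}^{conv}$: the relation generated by the isotopy relations already built into $C(X,L)$, by $f_{\overline{C(X,L)}}\circ g_{\overline{C(X,L)}}\sim \text{Id}$ (as in the proof that $C(X,L)\to\overline{C(X,L)}$ is pseudo-final), and by the local relations on every page $E^n$ recorded in Theorem \ref{thm:properties} and Figure \ref{fig:cobordismrelations}. These are the neck-cutting relation, sphere evaluations, two dots give zero, and the braid relation \eqref{eq:braid}. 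Each of these is an equality of maps on every $E^n$ by Theorem \ref{thm:properties}, so the relation is $F$-invariant. By Theorem \ref{thm:quotientcategory}, passing to the quotient does not change the colimit.

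Next I verify condition (1) of Definition \ref{def:final}: every skein $[\Sigma]$ representing $\alpha$ admits an arrow to some $[\Sigma_{K(r)}]$ in the quotient. As in \cite{mn22}, I isotope $\Sigma$ so that it meets each $2$-handle $D^2\times D^2$ in parallel copies of the core disc. The count of these copies is $|\alpha|+2r$ for some $r$, with $\alpha^\pm+r$ copies of each orientation. I then use the $G_{\overline{C(X,L)}}$ arrows and connected-sum tubes to merge all input balls into one ball. After that I enlarge this ball to swallow the whole $0$-handle part of $\Sigma$ away from the core discs and $L\times I$. The result is a morphism $[\Sigma]\to[\Sigma_{K(r)}]$; the choice of path used to merge balls is irrelevant by the sweep-around move.

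The main obstacle is condition (2): any two such arrows $[\Sigma]\to[\Sigma_{K(r)}]$ and $[\Sigma]\to[\Sigma_{K(r')}]$ must be connected by a zigzag of arrows in $2HC^{conv}$. I would first stabilize both arrows by dotted-annulus morphisms $S_{Z^\bullet}$ until they land at a common level $r''$, so the two resulting cobordisms $K(r'')\to K(r'')$ are surfaces in $S^3\times I$ that agree near the handle cores. They can differ in two ways. First, they can differ by how the cable strands are permuted; this is absorbed by the braid morphisms $S_{\sigma_i}$. Second, they can differ by isotopies of the skein across the cocores, where a sheet is pushed over a $2$-handle. Such a slide changes the strand count by creating an oppositely oriented pair, giving a factor $Z(i)$ or $Z^r(i)$. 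Using neck-cutting, I would rewrite this as a $\mathbb{k}$-linear combination, with coefficients summing to $1$, of a dotted annulus term $S_{Z^\bullet}$ and braid/cup-cap terms. This is exactly where $2HC^{conv}$ rather than $2HC$ is needed, and where $1+1$ invertibility (through \eqref{eq:braid} and $E^n(Z^r(i))\circ E^n(Z(i))=2$) enters. The relations $\psi^{[m]}\sim 0$ and $\psi^{[N-1]}\sim\text{Id}$ of Proposition \ref{prop:2_hdby} should appear here as consequences of the sphere and dot relations. I expect this cocore-crossing bookkeeping to be the delicate step, and I would check it first in the case of a single handle with $r=0$ before handling general $r$ and several handles.
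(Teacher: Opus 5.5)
Your outline has the paper's architecture: an $E^n$-invariant quotient; shrinking and merging input balls and making $\Sigma$ transverse to the cocores to reach some $[\Sigma_{K(r)}]$; then comparing two such arrows along a generic isotopy that is a braid except at finitely many events, where a sheet is pushed across a cocore and creates an oppositely oriented pair of strands. The gap is that the step carrying condition (2) of Definition \ref{def:final} is left as an expectation, and the form you anticipate for it is not a relation in $2HC^{conv}$.

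At a creation event there are two arrows out of a common skein:
\begin{itemize}
\item $\phi$ to $[\Sigma_{K(r)}]$;
\item $\phi'$ to $[\Sigma_{K(r+1)}]$, in which the new pair bounds an \emph{undotted} annulus $Z(i)$ tubed to the rest of the surface.
\end{itemize}
Neck-cutting that tube splits $E^n(\phi')$ into two summands with total coefficient $2$, and only one of them is the dotted-annulus term. Moreover, the cup--cap composite $Z(i)\circ Z^r(i)$ is not an arrow of $2HC$. So ``rewriting the slide'' as a convex combination of $S_{Z^\bullet}$ and braid/cup--cap terms does not produce a zigzag.

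What closes the zigzag in the paper is a convex combination applied \emph{after} $\phi'$:
\[
\bigl(\tfrac12\sigma_i+\tfrac12\mathrm{id}\bigr)\circ\phi' \;\sim\; S_{Z^{\bullet}}\circ\phi \qquad\text{on every } E^n .
\]
This holds because $\tfrac12(\sigma_i+\mathrm{id})=\mathrm{id}-\tfrac12 Z(i)\circ Z^r(i)$ by \eqref{eq:braid}, $Z^r(i)\circ Z(i)=2$, and a dotted torus is $0$. The cup--cap term then cancels exactly the unwanted neck-cut summand, the one with the dot off the annulus. Both sides are $\phi'$, respectively $\phi$, followed by an arrow of $2HC^{conv}$ into $[\Sigma_{K(r+1)}]$, which is precisely the diagram finality asks for. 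The paper additionally imposes that a dotted sphere bounding a ball acts as the identity. This explicit identity, with the symmetrizer on the pair-creating leg, is the missing idea.

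Three smaller corrections:
\begin{itemize}
\item Stabilizing both arrows by $S_{Z^\bullet}$ to a common level $r''$ does not make them comparable. They are still arrows out of $[\Sigma]$, not cobordisms $K(r'')\to K(r'')$. The comparison has to be done event by event along the isotopy between the two transverse positions: braids between events, and the identity above at each creation (read backwards at annihilations).
\item Neck-cutting, the undotted-sphere and two-dot relations, and \eqref{eq:braid} are not relations between arrows of $\overline{C(X,L)}^{conv}$, because their two sides have different coefficient sums. Take the quotient to be the congruence $f\sim g \iff E^n(f)=E^n(g)$ for all $n$, which is automatically compatible with composition and $E^n$-invariant, and use the local relations only to compute on $E^n$.
\item Shrink the input balls off the cocores with a $G_{\overline{C(X,L)}}$ arrow before the transversality isotopy, since arrows of $C(X,L)$ keep the input balls fixed.
\end{itemize}
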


An immediate corollary is the 2-handlebody formula as stated in \cite{mn22}
\begin{Cor}
    \begin{equation} \label{eq:fcolim}
    S^{E^{n}}(X,L) \cong {\text{Fcolim}}_{r\rightarrow \infty }{\text{colim}} _{S_{|\alpha|+2r}}E^{n}(K(r))  \footnote{Note that ${\text{colim}} _{S_{|\alpha|+2r}}E^{n}(K(r))$ contains an abuse of notation, simply indicating that the system over which we are considering the colimit has a single object, along with morphisms given by the symmetric group. }
 \end{equation} 
\end{Cor}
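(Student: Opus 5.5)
The corollary follows by combining Proposition \ref{thm:2hdlbdy} with the colimit results of Section 3, applied to the functor $E^{n}$ in place of $Z$. First I would chain the isomorphisms
\[
\mathcal{S}^{E^n}(X;L) \cong \mathrm{colim}_{\overline{C(X,L)}} E^n \cong \mathrm{colim}_{\overline{C(X,L)}^{conv}} \overline{E^n} \cong \mathrm{colim}_{2HC^{conv}} \overline{E^n} \cong \mathrm{colim}_{2HC} E^n .
\]
The first isomorphism is the pseudo-finality of $C(X,L)\subset\overline{C(X,L)}$ together with Theorem \ref{thm:quotientcategory}. The second and fourth isomorphisms are Theorem \ref{thm:convexclosure}. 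The third is the pseudo-finality asserted in Proposition \ref{thm:2hdlbdy}: we pass to the $E^n$-invariant quotient in which $2HC^{conv}/\sim$ is final, and then apply the finality theorem followed by Theorem \ref{thm:quotientcategory}. Each page $E^n$ is a legitimate TQFT on abstract $S^3$ by Theorem \ref{thm:skeinfromLH}, so these colimits are defined.

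Next I would compute $\mathrm{colim}_{2HC}E^n$ explicitly. The objects are indexed by $r\in\mathbb{N}^k$. Each object $\Sigma_{K(r)}$ carries the endomorphisms $S_{\sigma_i}$. By equation (\ref{eq:braid}), $[\sigma_i]_n=[\sigma_i^{-1}]_n$, so the braid group action on $E^n(K(r))$ factors through the symmetric group $\mathfrak{S}_{|\alpha|+2r}$. Hence I would first take the colimit over the full subcategory on each single object, which gives the coinvariants $\mathrm{colim}_{\mathfrak{S}_{|\alpha|+2r}}E^n(K(r))$. The dotted-annulus maps $S_{Z^\bullet(r)}$ then induce maps between these coinvariant spaces, by the usual iterated-colimit (Fubini) argument.

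Finally I would verify that the resulting system indexed by $\mathbb{N}^k$, ordered componentwise, is filtered, so that the outer colimit is an Fcolim in the sense of Section 3. The first axiom holds because any $r,r'$ map to $\max(r,r')$. For the second axiom, take two composites of annulus insertions $\Sigma_{K(r)}\to\Sigma_{K(r')}$; they differ only by the order and position in which the new strand pairs are added. After composing with a braid relabelling they agree in the coinvariants, since any two such composites differ by a permutation of the added strands. This is the step I expect to be the main obstacle. It needs (i) that the quotient by the symmetric group action is compatible with $S_{Z^\bullet}$, i.e.\ that conjugating an annulus insertion by a braid yields another annulus insertion up to the braid action, and (ii) that the dotted annulus on $E^n$ behaves as on $Kh_\bullet$. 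I would handle (ii) by transporting the $E^2$ identities through Lemma \ref{fg} together with Theorem \ref{thm:properties}, in particular the neck-cutting relations in (\ref{fig:cobordismrelations}). Once filteredness is established, the colimit over $\mathbb{N}^k$ is the Fcolim, which yields equation (\ref{eq:fcolim}).
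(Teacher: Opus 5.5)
Your chain of isomorphisms runs through $\overline{C(X,L)}$, its convex closure, $2HC^{conv}$ and $2HC$, justified by the same Section~3 results and Proposition~\ref{thm:2hdlbdy}; this is exactly the paper's proof. The paper simply asserts the last identification of $\mathrm{colim}_{2HC}E^n$ with the iterated colimit, and your justification fills in that step: the braid action factors through $\mathfrak{S}_{|\alpha|+2r}$ via (\ref{eq:braid}), annulus insertions descend to coinvariants because relocating an inserted annulus is realized by a braid, and so the outer system is directed over $\mathbb{N}^k$. Your worry (ii) is unnecessary there, since those compatibilities already hold at the level of cobordisms for any functorial TQFT.
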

\begin{proof}
The proof follows from the various results in Section 3.

 \begin{align*} \label{eq:fcolim}
    S^{E^{n}}(X,L) &\cong {\text{colim}}_{[\Sigma]\in C(X,L)} E^{n} \\ &\cong {\text{colim}}_{[\Sigma]\in \overline{C(X,L)}} E^{n} \\ & \cong {\text{colim}}_{[\Sigma]\in {\overline{C(X,L)}}^{conv}} E^{n}([\Sigma]) \\ & \cong {\text{colim}}_{[\Sigma]\in 2HC^{conv}}E^{n}([\Sigma]) \\ & \cong{\text{colim}}_{[\Sigma]\in 2HC}E^{n}([\Sigma]) \\ & \cong {\text{Fcolim}}_{r\rightarrow \infty }{\text{colim}} _{S_{|\alpha|+2r}}E^{n}(K(r)) 
 \end{align*}
\end{proof}

\begin{proof}[\textbf{Proof of Theorem }\ref{thm:2hdlbdy}]

\par Fix a Kirby diagram for $X$, with 2-handles attached to a single $B^4$, along a framed link $K$.
Start with an element $[\Sigma]\in \overline{C(X,L)}$ and map it to a skein with smaller input balls using a map in $G_{\overline{C(X,L)}}$, so that none of the input balls intersect co-cores of the 2-handles. We will abuse notation and still call it $[\Sigma]$. Then consider a larger ball $B$, containing all the input balls of $[\Sigma]$ and all the non-trivial topology of the surface. We will get a morphism from $[\Sigma]$ to a new skein with $B$ as the only input ball. Then we consider a morphism given by the necessary isotopy, and obtain a surface transversal to the co-cores. Finally, consider a smaller ball and observe that the surfaces are transversal to the co-cores. Hence, we get a morphism from $[\Sigma]$ to a filling of the form $[\Sigma _{K(r)}]$ \footnote{The procedure mentioned above is similar in spirit to the one in \cite{mn22}. However, note that the skeins in $C(X,L)$ as well as $\overline{C(X,L)}$ , are not as flexible as the lasagna fillings in \cite{mn22} to be able to perform isotopies. Hence, we perform this procedure of adding and deleting input balls of a convenient size to be able to perform the required isotopy that makes the skein transversal to the co-core.}.
Now, let us see that any two such morphisms can be related by the morphisms mentioned in the proposition. It turns out that this cannot be achieved in $\overline{C(X,L)}$, but can be achieved in the convex closure. We can conveniently switch between $\overline{C(X,L)}$ and $\overline{C(X,L)}^{cov}$ due to Theorem \ref{thm:convexclosure}.
Away from the singularities \footnote{The term singularity is being used in the context of Theorem 1.1 in \cite{mn22}. To be precise, these singularities appear as critical points of a Morse function on a 3-manifold whose level sets are skeins in the isotopy}, these morphisms are simply related by the ``braid group action". At the singularities, we go from the filling on the left in Figure \ref{fig:Ew} to one on the right side. 

\begin {figure}
\begin {center}
\begin{picture}(0,0)%
\includegraphics{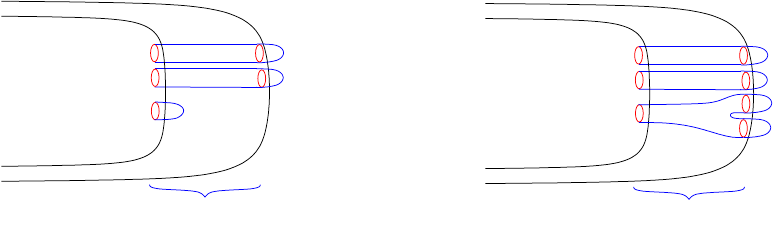}%
\end{picture}%
\setlength{\unitlength}{3158sp}%
\begingroup\makeatletter\ifx\SetFigFont\undefined%
\gdef\SetFigFont#1#2#3#4#5{%
  \reset@font\fontsize{#1}{#2pt}%
  \fontfamily{#3}\fontseries{#4}\fontshape{#5}%
  \selectfont}%
\fi\endgroup%
\begin{picture}(7728,2250)(889,-3049)
\put(5745,-2824){\makebox(0,0)[lb]{\smash{{\SetFigFont{10}{12.0}{\rmdefault}{\mddefault}{\updefault}{\color[rgb]{0,0,0}$B$}%
}}}}
\put(2870,-2948){\makebox(0,0)[lb]{\smash{{\SetFigFont{10}{12.0}{\rmdefault}{\mddefault}{\updefault}{\color[rgb]{0,0,1}$D$}%
}}}}
\put(3680,-1178){\makebox(0,0)[lb]{\smash{{\SetFigFont{10}{12.0}{\rmdefault}{\mddefault}{\updefault}{\color[rgb]{0,0,1}$C_{j}^{i, \pm}$}%
}}}}
\put(907,-2395){\makebox(0,0)[lb]{\smash{{\SetFigFont{10}{12.0}{\rmdefault}{\mddefault}{\updefault}{\color[rgb]{0,0,0}$B'$}%
}}}}
\put(904,-2801){\makebox(0,0)[lb]{\smash{{\SetFigFont{10}{12.0}{\rmdefault}{\mddefault}{\updefault}{\color[rgb]{0,0,0}$B$}%
}}}}
\put(7711,-2971){\makebox(0,0)[lb]{\smash{{\SetFigFont{10}{12.0}{\rmdefault}{\mddefault}{\updefault}{\color[rgb]{0,0,1}$Z_i$}%
}}}}
\put(8521,-1201){\makebox(0,0)[lb]{\smash{{\SetFigFont{10}{12.0}{\rmdefault}{\mddefault}{\updefault}{\color[rgb]{0,0,1}$C_{j}^{i, \pm}$}%
}}}}
\put(5748,-2418){\makebox(0,0)[lb]{\smash{{\SetFigFont{10}{12.0}{\rmdefault}{\mddefault}{\updefault}{\color[rgb]{0,0,0}$B'$}%
}}}}
\end{picture}%

\caption {A schematic of how two fillings differ at singularities. Here $C^{i}_{j}$ denote the discs as in \ref{fig:generator}}
\label{fig:Ew}
\end {center}
\end {figure}

Now, let us define the proper quotient category so that we can show that the $2HC^{conv}$ is pseudo-final.
First, we declare the following relation: for a sphere bounding a 3-ball in a given four-manifold, we declare 
\begin{equation*}
\begin{tikzpicture}[anchorbase, scale=.375]
	\path [fill=blue,opacity=0.3] (0,0) circle (1);
	\draw (-1,0) .. controls (-1,-.4) and (1,-.4) .. (1,0);
	\draw[dashed] (-1,0) .. controls (-1,.4) and (1,.4) .. (1,0);
	\draw[very thick] (0,0) circle (1);
	\node at (0,0.6) {\footnotesize$\bullet$};
\end{tikzpicture}
\cong 1 
\end{equation*}
To be more precise, for a skein given by surface $\Sigma \ \sqcup S $, where S is a contractible dotted sphere, let $\overline{\Sigma}$ be the surface formed by removing a collar neighborhood of $\partial \Sigma$. Then the aforementioned quotient is saying: 
$$\{[\overline{\Sigma}] \xrightarrow{\text{appropriate map in } G_{\overline{C(X,L)}}} [\Sigma] \xrightarrow{[S]} [\overline{\Sigma}] \} \cong \text{Id}_{[\overline{\Sigma}]}$$

Before we proceed, we introduce a new representation of skeins similar to the ones in Figure \ref{fig:Ew}. We mix the standard diagram for surfaces with the dTL notation introduced in \cite{hogancamp2022kirbycolorkhovanovhomology}. To understand these 1-dimensional objects, simply rotate the diagram 360 degrees to get the corresponding surface (dots are not rotated under this diagrammatic representation). This procedure is the same as the one depicted in figure \ref{fig:st}. For example under this notation the diagrams in Figure \ref{fig:Ew} can be represented as follows: 
\
\begin{equation}\label{eq:newdiagram}
 \begin{tikzpicture}[fill opacity=.3, scale= 1.5, anchorbase]
 \filldraw [very thick,fill=red] (-1,0) rectangle (1,1);

	\filldraw [very thick,fill=blue] (-1,-1) rectangle (1,0);
    \filldraw [very thick,fill=blue] (-1,-3) rectangle (0,-1);
     \filldraw [very thick,fill=blue] (0,-3) rectangle (1,-1);
     \node [opacity=1] at (0,-0.5) {$\text{Id}_{K(r)}$};
    \node [opacity=1] at (-0.5,-2) {$\text{Id}_{K(r)}$};
    \capfig{0.5}{4}{0.3}[thick]
\end{tikzpicture} 
\ \ \ \ \  \ \ \ \ \ \ \ \ \ \ \ \ \ \ \ \ 
\begin{tikzpicture}[fill opacity=.3, scale= 1.5, anchorbase]
    \filldraw [very thick,fill=red] (-1,0.5) rectangle (1,1);
    \filldraw [very thick,fill=blue] (-1,0) rectangle (1,0.5);
	\filldraw [very thick,fill=blue] (-1,-1) rectangle (1,0);
    \filldraw [very thick,fill=blue] (-1,-3) rectangle (0,-1);
     \filldraw [very thick,fill=blue] (0,-3) rectangle (1,-1);
     \node [opacity=1] at (0,-0.5) {$\text{Id}_{K(r+1)}$};
    \node [opacity=1] at (-0.5,-2) {$\text{Id}_{K(r)}$};
    \capfig{0.5}{3.5}{0.3}[thick]
   
     \node [opacity=1] at (0.5,-1.75) {$\#$};
     \draw[very thick] (0.25,-1) to [in=-90, out=-90] (.75,-1);
\end{tikzpicture} 
\end{equation}
Essentially, we replaced the annular creation map by the symbol ``$\cup$". The parts in pink represent the rest of the skein inside the 2-handles i.e., copies of disks parallel to the co-core. Below the lowest blue boxes, we have the unique input ball. So both of the skeins above have a unique input ball and boundary link: $K(\alpha^++r,\alpha^-+r)\ \cup \  \text{Unknot}$.

\par \textbf{Goal:}  We need to show that there is a way to go in between the following two maps by using maps in $2HC^{conv}$, so as to prove pseudo-finality as in the definition\ref{def:final}.
\begin{equation*}
\ \
\begin{tikzpicture}[scale=2, anchorbase]
\draw (0,-2.75) rectangle (1,-1);
    \capfig{0.5}{3.5}{0.3}[thick]
     \node [opacity=1] at (0.5,-1.75) {$\#$};
     \draw[very thick] (0.25,-1.25) to [in=-90, out=-90] (.75,-1.25);
\end{tikzpicture}
: \ \ \ \ \
\begin{tikzpicture}[fill opacity=.3, scale= 1.5, anchorbase]
 \filldraw [very thick,fill=red] (-1,0) rectangle (1,1);
\filldraw [very thick,fill=blue] (-1,-1) rectangle (1,0);
    \filldraw [very thick,fill=blue] (-1,-3) rectangle (0,-1);
     \filldraw [very thick,fill=blue] (0,-3) rectangle (1,-1);
     \node [opacity=1] at (0,-0.5) {$\text{Id}_{K(r)}$};
    \node [opacity=1] at (-0.5,-2) {$\text{Id}_{K(r)}$};
    \capfig{0.5}{4}{0.3}[thick]
\end{tikzpicture} 
\stackrel{isotopy}{\xrightarrow{\hspace{2cm}}}
\begin{tikzpicture}[fill opacity=.3, scale= 1.5, anchorbase]
    \filldraw [very thick,fill=red] (-1,0.5) rectangle (1,1);
    \filldraw [very thick,fill=blue] (-1,0) rectangle (1,0.5);
	\filldraw [very thick,fill=blue] (-1,-1) rectangle (1,0);
    \filldraw [very thick,fill=blue] (-1,-3) rectangle (0,-1);
     \filldraw [very thick,fill=blue] (0,-3) rectangle (1,-1);
     \node [opacity=1] at (0,-0.5) {$\text{Id}_{K(r+1)}$};
    \node [opacity=1] at (-0.5,-2) {$\text{Id}_{K(r)}$};
    \capfig{0.5}{3.5}{0.3}[thick]
   
     \node [opacity=1] at (0.5,-1.75) {$\#$};
     \draw[very thick] (0.25,-1) to [in=-90, out=-90] (.75,-1);
     
\end{tikzpicture} 
\ \ 
\xrightarrow{\hspace{2cm}}
\ \ 
\begin{tikzpicture}[fill opacity=.3, scale= 1.5, anchorbase]
 \filldraw [very thick,fill=red] (-1,0.5) rectangle (1,1);
     \filldraw [very thick,fill=blue] (-1,0) rectangle (1,0.5);
	\filldraw [very thick,fill=blue] (-1,-1) rectangle (1,0);
     \node [opacity=1] at (0,-0.5) {$\text{Id}_{K(r+1)}$};
     
\end{tikzpicture} \\
\end{equation*}

\begin{equation*}
\begin{tikzpicture}[scale=2, anchorbase]
\draw (0,-2.75) rectangle (1,-1);
    \capfig{0.5}{3.5}{0.3}[thick]
\end{tikzpicture}
\ \ \ \  : \ \ 
\begin{tikzpicture}[fill opacity=.3, scale= 1.5, anchorbase]
    \filldraw [very thick,fill=red] (-1,0.5) rectangle (1,1);
    \filldraw [very thick,fill=blue] (-1,0) rectangle (1,0.5);
	
    \filldraw [very thick,fill=blue] (-1,-3) rectangle (0,0);
     \filldraw [very thick,fill=blue] (0,-3) rectangle (1,0);
     
    \node [opacity=1] at (-0.5,-2) {$\text{Id}_{K(r)}$};
    \capfig{0.5}{3.5}{0.3}[thick]
     \

\end{tikzpicture} 
\ \ 
\xrightarrow{\hspace{7cm}}
\ \ 
\begin{tikzpicture}[fill opacity=.3, scale= 1.5, anchorbase]
 \filldraw [very thick,fill=red] (-1,0.5) rectangle (1,1);
     \filldraw [very thick,fill=blue] (-1,0) rectangle (1,0.5);
	\filldraw [very thick,fill=blue] (-1,-1) rectangle (1,0);
     \node [opacity=1] at (0,-0.5) {$\text{Id}_{K(r)}$};
       
\end{tikzpicture} \\
\end{equation*}
We will achieve the stated goal by considering the maps of the following two forms that exist in $2HC^{conv}$:
\begin{equation*}
\begin{tikzpicture}[scale=2, anchorbase]
\draw (0,-2.75) rectangle (2,-1);
 \braid{0.25}{3.5}{0.5}[thick]
 \lines{1.2}{3.5}{0.5}[thick]
 \node [opacity=1] at (0.2,-2) {$\frac{1}{2}$};
 \node [opacity=1] at (0.9,-2) {$+  \ \  \frac{1}{2}$};
 
\end{tikzpicture}
\ \ \ \ \   \ \ \ \ \ 
 \begin{tikzpicture}[scale=2, anchorbase]
\draw (0,-2.75) rectangle (1,-1);

     \node [opacity=1] at (0.5,-1.4) {$\bullet$};
     \draw[very thick] (0.25,-1.25) to [in=-90, out=-90] (.75,-1.25);
\end{tikzpicture}
\end{equation*}

Finally, we will show that the following two maps induce the same map on a spectral sequence $\{E^n, d_n\}$ coming from a KFT, hence proving pseudo-finality in an appropriate quotient. 

\begin{equation*}
\begin{tikzpicture}[scale=2, anchorbase]
\draw (0,-2.75) rectangle (2,-1);
 \braid{0.25}{3.5}{0.5}[thick]
 \lines{1.2}{3.5}{0.5}[thick]
 \node [opacity=1] at (0.2,-2) {$\frac{1}{2}$};
 \node [opacity=1] at (0.9,-2) {$+  \ \  \frac{1}{2}$};
 
\end{tikzpicture}
\ \
\circ
\ \
\begin{tikzpicture}[scale=2, anchorbase]
\draw (0,-2.75) rectangle (1,-1);
    \capfig{0.5}{3.5}{0.3}[thick]
     \node [opacity=1] at (0.5,-1.75) {$\#$};
     \draw[very thick] (0.25,-1.25) to [in=-90, out=-90] (.75,-1.25);
\end{tikzpicture}
: \ \ 
\end{equation*}
\par
\begin{equation*}
\begin{tikzpicture}[fill opacity=.3, scale= 1.5, anchorbase]
 \filldraw [very thick,fill=red] (-1,0) rectangle (1,1);
 \filldraw [very thick,fill=blue] (-1,-1) rectangle (1,0);
    \filldraw [very thick,fill=blue] (-1,-3) rectangle (0,-1);
     \filldraw [very thick,fill=blue] (0,-3) rectangle (1,-1);
     \node [opacity=1] at (0,-0.5) {${\text{Id}}_{K(r)}$};
    \node [opacity=1] at (-0.5,-2) {${\text{Id}}_{K(r)}$};
    \capfig{0.5}{4}{0.3}[thick];
\end{tikzpicture} 
\stackrel{isotopy}{\xrightarrow{\hspace{2cm}}}
\begin{tikzpicture}[fill opacity=.3, scale= 1.5, anchorbase]
    \filldraw [very thick,fill=red] (-1,0.5) rectangle (1,1);
    \filldraw [very thick,fill=blue] (-1,0) rectangle (1,0.5);
	\filldraw [very thick,fill=blue] (-1,-1) rectangle (1,0);
    \filldraw [very thick,fill=blue] (-1,-3) rectangle (0,-1);
     \filldraw [very thick,fill=blue] (0,-3) rectangle (1,-1);
     \node [opacity=1] at (0,-0.5) {$\text{Id}_{K(r+1)}$};
    \node [opacity=1] at (-0.5,-2) {$\text{Id}_{K(r)}$};
    \capfig{0.5}{3.5}{0.3}[thick];
   
     \node [opacity=1] at (0.5,-1.75) {$\#$};
     \draw[very thick] (0.25,-1) to [in=-90, out=-90] (.75,-1);
\end{tikzpicture} 
\ \ 
\xrightarrow{\hspace{2cm}}
\ \ 
\begin{tikzpicture}[fill opacity=.3, scale= 1.5, anchorbase]
 \filldraw [very thick,fill=red] (-1,0.5) rectangle (1,1);
     \filldraw [very thick,fill=blue] (-1,0) rectangle (1,0.5);
	\filldraw [very thick,fill=blue] (-1,-1) rectangle (1,0);
     \node [opacity=1] at (0,-0.5) {$\text{Id}_{K(r+1)}$};
     
\end{tikzpicture} 
\end{equation*}

\begin{equation*}
    \begin{tikzpicture}[scale=2, anchorbase]
\draw (0,-2.75) rectangle (1,-1);
    \capfig{0.5}{3.5}{0.3}[thick]
    
     \node [opacity=1] at (0.5,-1.4) {$\bullet$};
     \draw[very thick] (0.25,-1.25) to [in=-90, out=-90] (.75,-1.25);
\end{tikzpicture}
\ \ \ \ : \ \ \ \ \ \ 
\begin{tikzpicture}[fill opacity=.3, scale= 1.5, anchorbase]
 \filldraw [very thick,fill=red] (-1,0) rectangle (1,1);

	\filldraw [very thick,fill=blue] (-1,-1) rectangle (1,0);
    \filldraw [very thick,fill=blue] (-1,-3) rectangle (0,-1);
     \filldraw [very thick,fill=blue] (0,-3) rectangle (1,-1);
     \node [opacity=1] at (0,-0.5) {$\text{Id}_{K(r)}$};
    \node [opacity=1] at (-0.5,-2) {$\text{Id}_{K(r)}$};
    \capfig{0.5}{3.5}{0.3}[thick]
    \node [opacity=1] at (0.5,-1.35) {$\bullet$};
     \nsphere{0.5}{4.5}{0.3}[thick];
  
\end{tikzpicture} 
\stackrel{isotopy}{\xrightarrow{\hspace{2cm}}}
\begin{tikzpicture}[fill opacity=.3, scale= 1.5, anchorbase]
    \filldraw [very thick,fill=red] (-1,0.5) rectangle (1,1);
    \filldraw [very thick,fill=blue] (-1,0) rectangle (1,0.5);
	\filldraw [very thick,fill=blue] (-1,-1) rectangle (1,0);
    \filldraw [very thick,fill=blue] (-1,-3) rectangle (0,-1);
     \filldraw [very thick,fill=blue] (0,-3) rectangle (1,-1);
     \node [opacity=1] at (0,-0.5) {$\text{Id}_{K(r+1)}$};
    \node [opacity=1] at (-0.5,-2) {$\text{Id}_{K(r)}$};
    \capfig{0.5}{3.5}{0.3}[thick]
     \node [opacity=1] at (0.5,-1.15) {$\bullet$};
     \draw[very thick] (0.25,-1) to [in=-90, out=-90] (.75,-1);
      
\end{tikzpicture} 
\ \ 
\xrightarrow{\hspace{2cm}}
\ \ 
\begin{tikzpicture}[fill opacity=.3, scale= 1.5, anchorbase]
 \filldraw [very thick,fill=red] (-1,0.5) rectangle (1,1);
     \filldraw [very thick,fill=blue] (-1,0) rectangle (1,0.5);
	\filldraw [very thick,fill=blue] (-1,-1) rectangle (1,0);
     \node [opacity=1] at (0,-0.5) {$\text{Id}_{K(r+1)}$};
       
\end{tikzpicture} \\
\end{equation*}

Finally, to be able to use Theorem \ref{thm:quotientcategory}, we must show that on every $E^n$, these two maps represent the same maps. 

Let us first recall some properties that the cobordisms satisfy under each page $E^n$:

\begin{equation}
	\label{eqn:TLbraiding}
\begin{tikzpicture}[anchorbase]
\draw[very thick] (0,0) to (1,2);
\draw[very thick] (0,2) to (.4,1.2);
\draw[very thick] (.6,0.8) to (1,0);
\end{tikzpicture}
:= \ \ 
\begin{tikzpicture}[anchorbase]
\draw[very thick] (0,0) to (0,2);
\draw[very thick] (1,0) to (1,2);
\end{tikzpicture}
\ \ - \ \ 
\begin{tikzpicture}[anchorbase]
\draw[very thick] (0,0) to [in=100,out=80](1,0);
\draw[very thick] (0,2) to [in=-100, out=-80](1,2);
\end{tikzpicture}
:=
\begin{tikzpicture}[anchorbase]
\draw[very thick] (0,0) to (0.4,.8);
\draw[very thick] (.6,1.2) to (1,2);
\draw[very thick] (0,2) to (1,0);
\end{tikzpicture}
\end{equation}

\begin{equation}
	\label{eq:BNrels}
\begin{tikzpicture} [fill opacity=0.2,anchorbase, scale=.375]
	\path[fill=blue, opacity=.2] (1,0) arc[start angle=0, end angle=180,x radius=1,y radius=.5] 
		to (-1,4) arc[start angle=180, end angle=0,x radius=1,y radius=.5] to (1,0);
	\path[fill=blue, opacity=.2] (1,0) arc[start angle=360, end angle=180,x radius=1,y radius=.5] 
		to (-1,4) arc[start angle=180, end angle=360,x radius=1,y radius=.5] to (1,0);
	\draw [very thick] (0,4) ellipse (1 and 0.5);
	\draw [very thick] (0,0) ellipse (1 and 0.5);
	\draw[very thick] (1,4) -- (1,0);
	\draw[very thick] (-1,4) -- (-1,0);
\end{tikzpicture}
\, = \,
\begin{tikzpicture} [fill opacity=0.2,anchorbase, scale=.375,rotate=180]
	\path[fill=blue,opacity=.2] (1,4) arc[start angle=0, end angle=180,x radius=1,y radius=.5] 
		to [out=270,in=180] (0,2.5) to [out=0,in=270] (1,4);
	\path[fill=blue,opacity=.2] (1,4) arc[start angle=360, end angle=180,x radius=1,y radius=.5] 
		to [out=270,in=180] (0,2.5) to [out=0,in=270] (1,4);
	\draw[very thick] (0,4) ellipse (1 and 0.5);
	\draw[very thick] (-1,4) to [out=270,in=180] (0,2.5) to [out=0,in=270] (1,4);
	\path[fill=blue,opacity=.2] (1,0) arc[start angle=0, end angle=180,x radius=1,y radius=.5] 
		to [out=90,in=180] (0,1.5) to [out=0,in=90] (1,0);
	\path[fill=blue,opacity=.2] (1,0) arc[start angle=360, end angle=180,x radius=1,y radius=.5] 
		to [out=90,in=180] (0,1.5) to [out=0,in=90] (1,0);
	\draw[very thick] (0,0) ellipse (1 and 0.5);
	\draw[very thick] (-1,0) to [out=90,in=180] (0,1.5) to [out=0,in=90] (1,0);
	\node[opacity=1] at (0,1) {\footnotesize$\bullet$};
\end{tikzpicture}
+
\begin{tikzpicture} [fill opacity=0.2,anchorbase, scale=.375]
	\path[fill=blue,opacity=.2] (1,4) arc[start angle=0, end angle=180,x radius=1,y radius=.5] 
		to [out=270,in=180] (0,2.5) to [out=0,in=270] (1,4);
	\path[fill=blue,opacity=.2] (1,4) arc[start angle=360, end angle=180,x radius=1,y radius=.5] 
		to [out=270,in=180] (0,2.5) to [out=0,in=270] (1,4);
	\draw[very thick] (0,4) ellipse (1 and 0.5);
	\draw[very thick] (-1,4) to [out=270,in=180] (0,2.5) to [out=0,in=270] (1,4);
	\path[fill=blue,opacity=.2] (1,0) arc[start angle=0, end angle=180,x radius=1,y radius=.5] 
		to [out=90,in=180] (0,1.5) to [out=0,in=90] (1,0);
	\path[fill=blue,opacity=.2] (1,0) arc[start angle=360, end angle=180,x radius=1,y radius=.5] 
		to [out=90,in=180] (0,1.5) to [out=0,in=90] (1,0);
	\draw[very thick] (0,0) ellipse (1 and 0.5);
	\draw[very thick] (-1,0) to [out=90,in=180] (0,1.5) to [out=0,in=90] (1,0);
	\node[opacity=1] at (0,1) {\footnotesize$\bullet$};
\end{tikzpicture}
\, , \quad
\begin{tikzpicture}[anchorbase, scale=.375]
	\path [fill=blue,opacity=0.3] (0,0) circle (1);
	\draw (-1,0) .. controls (-1,-.4) and (1,-.4) .. (1,0);
	\draw[dashed] (-1,0) .. controls (-1,.4) and (1,.4) .. (1,0);
	\draw[very thick] (0,0) circle (1);
\end{tikzpicture}
= 0
\, , \quad
\begin{tikzpicture}[anchorbase, scale=.375]
	\path [fill=blue,opacity=0.3] (0,0) circle (1);
	\draw (-1,0) .. controls (-1,-.4) and (1,-.4) .. (1,0);
	\draw[dashed] (-1,0) .. controls (-1,.4) and (1,.4) .. (1,0);
	\draw[very thick] (0,0) circle (1);
	\node at (0,0.6) {\footnotesize$\bullet$};
\end{tikzpicture}
= 1
\, , \quad
\begin{tikzpicture}[fill opacity=.3, scale=.5, anchorbase]
	\filldraw [very thick,fill=blue] (-1,-1) rectangle (1,1);
	\node [opacity=1] at (0,-.25) {$\bullet$};
	\node [opacity=1] at (0,.25) {$\bullet$};
	\end{tikzpicture}
= 0 \, .
\end{equation}
Now we do the calculations after applying $\{E^n,d_n\}$ in green color to avoid confusion with calculations in $\overline{C(X,L)}$.

\begin{equation*}
 \ \  \ \ \ \ \ \ \ \ \ \ \ \ \ \ \ 
 \begin{tikzpicture}[scale=2, anchorbase]
 \filldraw[opacity=0.3, green] (0,-2.75) rectangle (2,-1);
 \braid{0.25}{3.5}{0.5}[thick]
 \lines{1.2}{3.5}{0.5}[thick]
 \node [opacity=1] at (0.2,-2) {$\frac{1}{2}$};
 \node [opacity=1] at (0.9,-2) {$+  \ \  \frac{1}{2}$};
 \end{tikzpicture}
\circ \ \ 
\begin{tikzpicture}[scale=2, anchorbase]
\filldraw[opacity=0.3, green](0,-2.75) rectangle (1,-1);
    \capfig{0.5}{3.5}{0.3}[thick]
     \node [opacity=1] at (0.5,-1.75) {$\#$};
     \draw[very thick] (0.25,-1.25) to [in=-90, out=-90] (.75,-1.25);
\end{tikzpicture}
  \ \ \ \ \ \ \ \ \ \ \ \ \  =
\begin{tikzpicture}[scale=2, anchorbase]
 \filldraw[opacity=0.3, green] (0,-2.75) rectangle (2,-1);
 \braid{0.25}{3.5}{0.5}[thick]
 \lines{1.2}{3.5}{0.5}[thick]
 \node [opacity=1] at (0.2,-2) {$\frac{1}{2}$};
 \node [opacity=1] at (0.9,-2) {$+  \ \  \frac{1}{2}$};
\end{tikzpicture}
\circ  \ \ 
\begin{tikzpicture}[scale=2, anchorbase]
\filldraw[opacity=0.3, green] (0,-2.75) rectangle (2,-1);
    \capfig{0.5}{3.5}{0.3}[thick];
    
     \draw[very thick] (0.25,-1.25) to [in=-90, out=-90] (.75,-1.25);
     \capfig{1.5}{3.5}{0.3}[thick];
    \node [opacity=1] at (0.5,-2) {$\bullet$};
     \node [opacity=1] at (1.5,-1.4) {$\bullet$};
 \node [opacity=1] at (1,-1.75) {$+ $};
 
     \draw[very thick] (1.25,-1.25) to [in=-90, out=-90] (1.75,-1.25);
\end{tikzpicture}
\ \ \ \ \ \ \ \ \ \ \ \ \ = 
\begin{tikzpicture}[scale=2, anchorbase]
 \filldraw[opacity=0.3, green] (0,-2.75) rectangle (2,-1);
 \cupcap{0.25}{3.5}{0.5}[thick]
 \lines{1.2}{3.5}{0.5}[thick]
 \node [opacity=1] at (0.2,-2) {$-\frac{1}{2}$};
 \node [opacity=1] at (0.9,-2) {$+$};
\end{tikzpicture}
\circ  \ \ 
\begin{tikzpicture}[scale=2, anchorbase]
\filldraw[opacity=0.3, green](0,-2.75) rectangle (2,-1);
    \capfig{0.5}{3.5}{0.3}[thick];
    
     \draw[very thick] (0.25,-1.25) to [in=-90, out=-90] (.75,-1.25);
     \capfig{1.5}{3.5}{0.3}[thick];
    \node [opacity=1] at (0.5,-2) {$\bullet$};
     \node [opacity=1] at (1.5,-1.4) {$\bullet$};
 \node [opacity=1] at (1,-1.75) {$+ $};
 
     \draw[very thick] (1.25,-1.25) to [in=-90, out=-90] (1.75,-1.25);
\end{tikzpicture}
  \ \ \ \ \ \  \ \ \ \ \ \ = 
\begin{tikzpicture}[scale=2, anchorbase]
\filldraw[opacity=0.3, green] (0,-2.75) rectangle (4,-1);
    \capfig{0.5}{3.5}{0.3}[thick];
    
     \draw[very thick] (0.25,-1.25) to [in=-90, out=-90] (.75,-1.25);
     \capfig{1.5}{3.5}{0.3}[thick];
    \node [opacity=1] at (0.5,-2) {$\bullet$};
     \node [opacity=1] at (1.5,-1.4) {$\bullet$};
 \node [opacity=1] at (1,-1.75) {$+ $};
  \node [opacity=1] at (2,-1.75) {$- $};
  \node [opacity=1] at (2.4,-1.75) {$\frac{1}{2} \times 2$};
 
     \draw[very thick] (1.25,-1.25) to [in=-90, out=-90] (1.75,-1.25);
      \capfig{3}{3.5}{0.3}[thick];
      \draw[very thick] (2.75,-1.25) to [in=-90, out=-90] (3.25,-1.25);
       \node [opacity=1] at (3,-2) {$\bullet$};
        \node [opacity=1] at (3.4,-1.75) {$+ \ \ 0$};
       
\end{tikzpicture}
 = 
\begin{tikzpicture}[scale=2, anchorbase]
\filldraw[opacity=0.3, green] (0,-2.75) rectangle (1,-1);
    \capfig{0.5}{3.5}{0.3}[thick]
    
     \node [opacity=1] at (0.5,-1.4) {$\bullet$};
     \draw[very thick] (0.25,-1.25) to [in=-90, out=-90] (.75,-1.25);
\end{tikzpicture}
\end{equation*}
This proves the finality of $2HC^{conv}$ by the following quotient (i.e., pseudo-finality):

\begin{equation*}
 \begin{tikzpicture}[scale=2, anchorbase]
 \filldraw[opacity=0.3] (0,-2.75) rectangle (2,-1);
 \braid{0.25}{3.5}{0.5}[thick]
 \lines{1.2}{3.5}{0.5}[thick]
 \node [opacity=1] at (0.2,-2) {$\frac{1}{2}$};
 \node [opacity=1] at (0.9,-2) {$+  \ \  \frac{1}{2}$};
 \end{tikzpicture}
\circ \ \ 
\begin{tikzpicture}[scale=2, anchorbase]
\filldraw[opacity=0.3](0,-2.75) rectangle (1,-1);
    \capfig{0.5}{3.5}{0.3}[thick]
     \node [opacity=1] at (0.5,-1.75) {$\#$};
     \draw[very thick] (0.25,-1.25) to [in=-90, out=-90] (.75,-1.25);
\end{tikzpicture}
\ \ \ \ \ \ 
\sim
\ \ \ \ \ 
\begin{tikzpicture}[scale=2, anchorbase]
\filldraw[opacity=0.3] (0,-2.75) rectangle (1,-1);
    \capfig{0.5}{3.5}{0.3}[thick]
    
     \node [opacity=1] at (0.5,-1.4) {$\bullet$};
     \draw[very thick] (0.25,-1.25) to [in=-90, out=-90] (.75,-1.25);
\end{tikzpicture} 
\end{equation*}

\end{proof}

Note that \ref{thm:quotientcategory} implies that not only are these colimits isomorphic, but they also induce the same universal arrows. Coincidentally, the differentials we are about to define are universal arrows, coming from the universal property of the colimits. Hence, the differentials on the original colimit and the colimit given by $2HC^{conv}$ agree.

Let us provide a potential differential to get a spectral sequence of the lasagna modules. For a fixed link, a KFT $\mathcal{A}$ gives a spectral sequence $\{E^{n},d_n\}$. 
Hence, for an object $[\Sigma]\in \overline{C(X,L)}$, we have 
\begin{equation*}
    d_n([\Sigma]): E^{n}([\Sigma])\rightarrow E^{n}([\Sigma])
\end{equation*}
Note that by definition, $E^{n}([\Sigma])$ is  $E^{n}(\sqcup_{i}\partial \Sigma |_{B_{i}})$.\footnote{As we have already discussed, there is no well-defined notion of disjoint union for link diagrams for links in $S^3$. We simply want to point out that $E^{n}([\Sigma])$ comes from an appropriate link.}
Hence, for every object $[\Sigma]$ in $\overline{C(X,L)}$, there is a differential on $E^n ([\Sigma])$. Using this differential along with the universal property of a colimit, we get the following commutative diagram. 
\[\begin{tikzcd}
    E^{n}([\Sigma]) \arrow{r}{E^{n}(S)} \arrow{rrd}{\pi_{\Sigma}} \arrow{dd}{d_{n}([\Sigma])} & E^{n}([\Sigma^{'}]) \arrow{rd} {\pi_{\Sigma^{'}}}  \arrow{dd}{d_n([\Sigma]^{'}}\\ & & S^{E^{n}}(X,L) \arrow[dotted]{dd}{\overline{d_n}} \\
    E^{n}([\Sigma]) \arrow{rrd}{\pi_{\Sigma}}  \arrow{r}{E^{n}(S)} & E^{n}([\Sigma^{'}]) \arrow{rd} \\ & & S^{E^{n}}(X,L)
\end{tikzcd}\]

By the universal property, we have $$\overline{d_n}\circ \pi_{\Sigma}= \pi_{\Sigma}\circ d_{n}$$
At the same time $\overline{d_n} \circ \overline{d_n}$ is a unique arrow that satisfies: $$\overline{d_n} \circ \overline{d_n} \circ \pi_{\Sigma} = \pi_{\Sigma}\circ d_n \circ d_n = 0$$ $$\text{So we have : } \overline{d_n} \circ \overline{d_n} =0 $$ Hence, $\overline{d_n}$ is indeed a differential. 
Such a differential exists for $\overline{C(X,L)}^{conv}$ as well. We ignore the notational distinction between these two at the moment.
Now let us analyze the homology $H_{*}(S^{E^{n}}(X,L))$ with respect to this differential.
Note that this differential being a universal arrow agrees with the differential on $2HC^{conv}\subset \overline{C(X,L)}^{conv}$.
Now to analyze $H_{*}(S^{E^{n}}(X,L))$, consider the following commutative diagram:
\[\begin{tikzcd}
    E^{n+1}([\Sigma]) \arrow{rrd} \arrow{r}{E^{n+1}(S)} \arrow{dd}{:=} & E^{n+1}([\Sigma^{'}]) \arrow{rd} \arrow{dd}{:=}   \\ & & S^{E^{n+1}}(X,L) \arrow[dotted]{dd}{\Phi_{n}} \\
     H_{*}(E^{n}([\Sigma])) \arrow{rrd} \arrow{r}{E^{n}(S)_{*}} & H_{*}(E^{n}([\Sigma^{'}])) \arrow{rd} \\&  & H_{*}(S^{E^{n}}(X,L))
\end{tikzcd}\]

Before proving the main theorem, we will prove a lemma regarding the braid group action.
Let $\mathfrak{S}_i$ be the symmetric group on $i$ elements. As an application of the equation \ref{eq:braid}, one can see that the braid group action factors through the symmetric group $\mathfrak{S}_{\|\alpha \|+2r}$. First, we consider the subsystem\footnote{We prefer the term subsystem to highlight that it is used as an indexing to consider a colimit. In reality, it is simply a subcategory of the indexing category} of $2HC$ corresponding to $ \mathfrak{S}_{\|\alpha \|+2r}$. This subsystem consists of skein corresponding to cables, mentioned in figure\ref{fig:generator}, along with morphisms on them given by the elements of the symmetric group. So the indexing category has a single object corresponding to $[\Sigma_{K(r)}]$ and the arrows correspond to elements of the symmetric group. With some abuse of notation, we will denote the colimit over this indexing category as ${\text{colim}}_{\mathfrak{S}_{\|\alpha \|+2r}}$.

We prove the following lemma:
\begin{Lem}
    \begin{equation*}
        H_{*}({\text{colim}}_{\mathfrak{S}_{\|\alpha \|+2r}}(E^{n}(K(r))))\cong {\text{colim}}_{\mathfrak{S}_{\|\alpha \|+2r}}(E^{n+1}(K(r)))
    \end{equation*}
    where the homology is taken using the differential induced by $d_n$ on the colimit.
\end{Lem}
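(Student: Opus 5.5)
The plan is to identify the colimit over the one-object category with arrows $\mathfrak{S}_{\|\alpha\|+2r}$ with the module of coinvariants, and then use that in characteristic zero coinvariants of a finite group are a natural direct summand, cut out by the averaging idempotent. Write $G=\mathfrak{S}_{\|\alpha\|+2r}$ and $V=E^{n}(K(r))$. By equation \eqref{eq:braid}, the braid group action on every page $E^n$ factors through $G$, since $[\sigma_i]_n=[\sigma_i^{-1}]_n$. Hence ${\text{colim}}_{G}V=V_G=V/\langle gv-v\rangle$. Since $\mathbb{k}$ has characteristic zero and $G$ is finite, the element $e=\frac{1}{|G|}\sum_{g\in G}g$ acts as an idempotent. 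The projection $V\to V_G$ then restricts to an isomorphism $eV=V^G\cong V_G$, and $V=eV\oplus(1-e)V$ with $(1-e)V=\langle gv-v\rangle$.

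The second step checks that $d_n$ is $G$-equivariant. Each $g$ acts by $E^n(S_g)$, the map induced on the $n$-th page by a cobordism. The maps a functorial KFT induces on the spectral sequence are morphisms of spectral sequences: they are induced by filtered chain maps (via the functoriality corollary and Lemma \ref{fg}), so they commute with $d_n$. Consequently $d_n$ commutes with $e$. This gives a decomposition of chain complexes $(V,d_n)=(eV,d_n)\oplus((1-e)V,d_n)$. The differential induced on the colimit by the universal property is exactly $d_n$ restricted to $eV\cong V_G$.

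Taking homology commutes with the idempotent, so
\[
H_*(V_G)\cong H_*(eV)=e\,H_*(V)\cong H_*(V)_G .
\]
Here $G$ acts on $H_*(V,d_n)$ through the induced maps on homology. By definition of a morphism of spectral sequences, $H_*(E^n(K(r)),d_n)=E^{n+1}(K(r))$, and under this identification $H_*(E^n(S_g))=E^{n+1}(S_g)$. So the induced action is the $G$-action on $E^{n+1}(K(r))$ that defines ${\text{colim}}_G E^{n+1}(K(r))$, and the lemma follows. Equivalently, one can note that $V\mapsto V_G$ is an exact functor on $\mathbb{k}[G]$-modules in characteristic zero, because $\mathbb{k}[G]$ is semisimple, and exact functors commute with homology.

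I expect the main obstacle to be the equivariance step. It requires the braid/symmetric group maps on page $n$ to really commute with $d_n$ and to induce on $E^{n+1}$ the same maps the TQFT $E^{n+1}$ assigns to those cobordisms. The maps of Theorem \ref{thm:properties} come from filtered chain maps whose $E^2$ terms agree with $Kh_\bullet$, and they are well defined only from page $2$ on. So I would argue inductively, using Lemma \ref{fg}: any two filtered lifts agreeing on $E^2$ agree on all pages, and the map on $E^{n+1}$ is the homology of the map on $E^n$.
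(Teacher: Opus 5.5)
Your proposal is correct and follows essentially the same route as the paper. Both arguments identify the colimit over $\mathfrak{S}_{\|\alpha\|+2r}$ with coinvariants, use the averaging idempotent in characteristic zero to trade coinvariants for invariants, and use that the symmetric group maps commute with $d_n$ and induce the $E^{n+1}$ action on homology. Your chain-level splitting $(V,d_n)=(eV,d_n)\oplus((1-e)V,d_n)$ is a slightly cleaner packaging of the paper's elementwise lift-and-average argument, and it makes explicit the injectivity step that the paper leaves implicit: a fixed cycle that is a boundary $d_n w$ is also the boundary $d_n(ew)$ of a fixed element.
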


\begin{proof}
    First, let us unravel the colimit:
    \begin{equation}
      {\text{colim}}_{\mathfrak{S}_{\|\alpha \|+2r}}(E^{n}(K(r))) \cong E^{n}((K(r))/\{v-\sigma(v)\| v\in E^{n}(K(r)), \ \sigma \in   \mathfrak{S}_{\|\alpha \|+2r}\}
    \end{equation}
However, in a field $\mathbb{k}$ of characteristic zero, 
\begin{equation*}
    E^{n}(K(r))^{\mathfrak{S}_{\|\alpha \|+2r}}\oplus \mathbb{k}\{v-\sigma(v)\}\cong  E^{n}(K(r))
\end{equation*}

where the first vector space in the summation is the subspace of vectors fixed under the symmetric group action. This can be seen as follows:
\begin{equation}
    v=\frac{\sum_{\sigma \in  \mathfrak{S}_{\|\alpha \|+2r}} \sigma(v)}{\| \mathfrak{S}_{\|\alpha \|+2r}\|}+ v-\frac{\sum_{\sigma \in  \mathfrak{S}_{\|\alpha \|+2r}} \sigma(v)}{\| \mathfrak{S}_{\|\alpha \|+2r}\|} = \frac{\sum_{\sigma \in  \mathfrak{S}_{\|\alpha \|+2r}} \sigma(v)}{\| \mathfrak{S}_{\|\alpha \|+2r}\|}+ \frac{\sum_{\sigma \in  \mathfrak{S}_{\|\alpha \|+2r}} v-\sigma(v)}{\| \mathfrak{S}_{\|\alpha \|+2r}\|}
\end{equation} 
and note that:
\begin{equation*}
    \sigma(\frac{\sum_{\sigma \in  \mathfrak{S}_{\|\alpha \|+2r}} \sigma(v)}{\| \mathfrak{S}_{\|\alpha \|+2r}\|})=\frac{\sum_{\sigma \in  \mathfrak{S}_{\|\alpha \|+2r}} \sigma(v)}{\| \mathfrak{S}_{\|\alpha \|+2r}\|} \ \ \ \ \forall \ \sigma \in \mathfrak{S}_{\|\alpha \|+2r}
\end{equation*}

Hence we get 
\begin{equation}
    E^{n}(K(r))^{\mathfrak{S}_{\|\alpha \|+2r}} \cong  E^{n}((K(r))/\mathbb{k}\{v-\sigma(v)\} \cong {\text{colim}}_{\mathfrak{S}_{\|\alpha \|+2r}}(E^{n}(K(r)))
\end{equation}
It is easy to see that the first isomorphism in the above equation commutes with the differentials of interest. Hence, it is enough to prove, 
\begin{equation}
    H_{*}( E^{n}(K(r))^{\mathfrak{S}_{\|\alpha \|+2r}})\cong  E^{n+1}(K(r))^{\mathfrak{S}_{\|\alpha \|+2r}}
\end{equation}
To see this, consider a cycle $v \in E^{n}(K(r))$ that is fixed by the symmetric group action. This gives us an element of $[v]\in E^{n+1}(K(r))$. Note that the symmetric group action fixes $[v]$ due to functoriality. To prove the opposite direction, consider $ [v] \in E^{n+1}(K(r))$ fixed by the symmetric group action:
\begin{equation*}
    \sigma_{n+1}([v])=[v]\ \  \forall \ \sigma  \ \in \ \mathfrak{S}_{\|\alpha \|+2r}
\end{equation*}

For any lift $v\in E^{n}(K(r))$, consider the element $\frac{\sum_{\sigma\in \mathfrak{S}_{\|\alpha \|+2r} }\sigma_{n}(v)}{\| \mathfrak{S}_{\|\alpha \|+2r}\|}$, observe that on the $(n+1)$-th page:
\begin{equation*}
    [\frac{\sum_{\sigma\in \mathfrak{S}_{\|\alpha \|+2r} }\sigma_{n}(v)}{\| \mathfrak{S}_{\|\alpha \|+2r}\|}]= \frac{\sum_{\sigma\in \mathfrak{S}_{\|\alpha \|+2r} }[\sigma_{n}(v)]}{\| \mathfrak{S}_{\|\alpha \|+2r}\|}=\frac{\sum_{\sigma\in \mathfrak{S}_{\|\alpha \|+2r} }\sigma_{n+1}([v])}{\| \mathfrak{S}_{\|\alpha \|+2r}\|}=[v]
\end{equation*}
Hence $\frac{\sum_{\sigma\in \mathfrak{S}_{\|\alpha \|+2r} }\sigma_{n}(v)}{\| \mathfrak{S}_{\|\alpha \|+2r}\|}$ is a lift of $[v]$ fixed by the braid group action on the n-th page, hence completing the proof.
\end{proof}
We now prove the main theorem of this paper. 
\begin{proof}{(of the \textbf{Theorem} \ref{thm: main})}

On the n-th page, we have 
\begin{equation}
    S_{\alpha \in H_{2}(X,L) } ^{E^{n}}(X,L)\cong {\text{Fcolim}}_{r\in \mathbb{N}^{k}} {\text{colim}}_{\sigma \in \mathfrak{S}_{\|\alpha \|+2r}} E^{n}(K(r))
\end{equation}
where the first colimit is filtered, taken over maps between cables of $K$ (dotted annulus to be precise).
Now, with respect to the differential induced on the colimit, after using the previous lemma, we get:
\begin{align*} 
    H_{*}(S_{\alpha \in H_{2}(X,L) } ^{E^{n}}(X,L))\cong  H_{*}( {\text{Fcolim}}_{r\in \mathbb{N}^{k}} {\text{colim}}_{\sigma \in \mathfrak{S}_{\|\alpha \|+2r}} E^{n}(K(r)))  \\  \cong  F{\text{colim}}_{r\in \mathbb{N}^{k}} H_{*}({\text{colim}}_{\sigma \in \mathfrak{S}_{\|\alpha \|+2r}} E^{n}(K(r)))\\ \cong  {\text{Fcolim}}_{r\in \mathbb{N}^{k}} {\text{colim}}_{\sigma \in \mathfrak{S}_{\|\alpha \|+2r}} H_{*}(E^{n}(K(r)))\\ \cong   {\text{Fcolim}}_{r\in \mathbb{N}^{k}} {\text{colim}}_{\sigma \in \mathfrak{S}_{\|\alpha \|+2r}} E^{n+1}(K(r)) \\ \cong  S_{\alpha \in H_{2}(X,L) } ^{E^{n+1}}(X,L)
\end{align*} \label{eq:differentdiff}
\begin{Rmk}
The fact that the differentials in the first line of the above equation \ref{eq:differentdiff} agree follows from the setting of the theorem \ref{thm:2hdlbdy} and the naturality part of the theorem \ref{thm:quotientcategory}. 
This can be seen as follows. Consider the pseudo-final inclusion functor $$i: \ 2HC \rightarrow \overline{C(X,L)}^{conv}$$ Let $d_{2HC}\ , \ d_{\overline{C(X,L)}^{conv}} $ be the differentials coming from both the systems using the procedure described above. Now let $[S]:[\Sigma]\rightarrow[\Sigma^{'}]$ be a morphism that is in $2HC$. Let $i_{*}$ be the isomorphism induced by $i$ on $E^n$. Now, consider the following diagram.
\end{Rmk}

\[\begin{tikzcd}
&& E^n(i([\Sigma]))\arrow{rr}{i([S])} \arrow{rdddd} & & E^n(i([\Sigma^{'}])) \arrow{ldddd} \\ \\
E^n([\Sigma])\arrow{rr}{[S]} \arrow{rruu}{i} \arrow{rdddd} & & E^n([\Sigma^{'}]) \arrow{rruu}{i}  \arrow{ldddd} && \\
\\
& & & \text{colim}_{\overline{C(X,L)}^{conv}} \arrow[dotted, color=purple]{dd}{d_{\overline{C(X,L)}^{conv}}}\\
\\
& \text{colim}_{2HC} \arrow[dotted,color=purple]{dd}{d_{2HC}} \arrow[ color=purple]{rruu}{i_{*}} & & \text{colim}_{\overline{C(X,L)}^{conv}} \\ \\
& \text{colim}_{2HC} \arrow[ color=purple]{rruu}{i_{*}} & & \\
\end{tikzcd}\]
Note that the purple square in the diagram above commutes. One can see this by considering the morphism $i_{*}^{-1}\circ d_{\overline{C(X,L)}^{conv}} \circ i_{*} $. One can check that this map satisfies the universal property of colimits over $2HC$, hence by uniqueness it must be equal to $d_{2HC}$.
Hence, the differentials on both colimits must match and give the same homology on each page. We can see that the advantage of this construction is that, even though the proof of the existence of the spectral sequence depends heavily on the choice of the 2-handlebody structure, the spectral sequence itself is independent (up to isomorphism of spectral sequences) of the choice of 2-handlebody structure.

Before we discuss some applications, we define the $E^\infty$ TQFT corresponding to a KFT. 
\begin{Def}\label{def:Einfty}
    For a given KFT $\mathcal{A}$, and some diagram $D$ of a fixed link $L$, if the spectral sequence corresponding to $\mathcal{A}$ converges on the $n$-th page, then we define $E^\infty (L)= E^{n} (L)$ . For a cobordism $\Sigma : L \rightarrow L^{'}$, we define $E^{\infty}([\Sigma])= E^{max(n, n^{'})}([\Sigma])$. Where the spectral sequence corresponding to $L^{'}$ converges on the $n^{'}$-th page.
\end{Def}

\end{proof}
\begin{Cor}
    \begin{equation*}
        rank ( S_{\alpha } ^{E^{n}}(X,L)) \geq rank(S_{\alpha  } ^{E^{n+1}}(X,L)) \geq rank(S_{\alpha  } ^{E^{\infty}}(X,L)) \ \ \forall n\geq2
    \end{equation*}
\end{Cor}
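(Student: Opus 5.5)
The plan is to read off the first inequality from Theorem \ref{thm: main}, and then pass to the $E^\infty$ page by stabilization.

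\textbf{First inequality.} Fix $\alpha\in H_2(X,L)$ and $n\geq 2$. Theorem \ref{thm: main} gives a differential $\overline{d_n}$ on $S^{E^n}_\alpha(X,L)$ with
\[
H_*\bigl(S^{E^n}_\alpha(X,L),\overline{d_n}\bigr)\cong S^{E^{n+1}}_\alpha(X,L).
\]
The grading by $\alpha$ is preserved: $\overline{d_n}$ is induced objectwise by the $d_n$'s on the skeins of a fixed class, so it respects the decomposition by $H_2(X,L)$. We are over a field $\mathbb{k}$, so $S^{E^n}_\alpha$ splits as cycles plus a complement, and cycles split as boundaries plus a complement. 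Hence the homology of any complex of $\mathbb{k}$-vector spaces is a subquotient, and its rank is at most the rank of the complex. This includes infinite ranks, compared as cardinals. This gives $\operatorname{rank} S^{E^n}_\alpha\geq \operatorname{rank} S^{E^{n+1}}_\alpha$. If one wants the statement degreewise, the same argument applies in each homological or quantum degree, provided $\overline{d_n}$ is homogeneous of the appropriate bidegree; that holds because it is a universal arrow built from homogeneous $d_n$.

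\textbf{Second inequality.} Iterating the first step gives $\operatorname{rank} S^{E^{n+1}}_\alpha\geq \operatorname{rank} S^{E^m}_\alpha$ for every $m\geq n+1$. It remains to compare with $S^{E^\infty}_\alpha$, and I would do this through the 2-handlebody description (\ref{eq:fcolim}). Each cable $K(r)$ is a single link, so its spectral sequence collapses at some finite page $N(r)$. For that $r$, $\operatorname{colim}_{\mathfrak{S}} E^{m}(K(r))=\operatorname{colim}_{\mathfrak{S}}E^\infty(K(r))$ for all $m\geq N(r)$, and the symmetric group actions agree by Definition \ref{def:Einfty}. Because the indexing system is filtered, any finite collection of linearly independent elements of $S^{E^\infty}_\alpha$ is represented at a single level $r$. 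There, the transition maps from the Fcolim only involve finitely many cables, so there is a page $m$ past which all the relevant spaces and maps coincide with their $E^\infty$ versions. Lifting the independent family to $S^{E^m}_\alpha$ and using filteredness again, the lifts are independent in $S^{E^m}_\alpha$. This gives $\operatorname{rank} S^{E^m}_\alpha\geq\operatorname{rank} S^{E^\infty}_\alpha$ for finite families, and hence for the cardinal rank as well.

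\textbf{Main obstacle.} The hard part is this last step. The pages $N(r)$ need not be uniformly bounded in $r$, so $S^{E^\infty}$ is not literally equal to any single $S^{E^m}$. One also has to check that $E^\infty$, with the morphism convention of Definition \ref{def:Einfty}, really satisfies the 2-handlebody formula, which I would prove by the same pseudo-finality argument as Proposition \ref{thm:2hdlbdy}. The finite-support argument, which bounds the rank by testing finite independent families, is designed to avoid needing a uniform collapse page.
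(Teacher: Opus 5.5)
Your first inequality is correct, and it is exactly how the paper obtains it. The gap is in the second step, at the point you flag yourself.

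The claim that ``the transition maps from the Fcolim only involve finitely many cables'' is false in the direction you need. Finite support lets you \emph{represent} your family at one level $r$, and it lets you \emph{witness} a given relation at one later level $r'$. But to conclude that the lifts are \emph{independent} in $S^{E^m}_\alpha$, you must rule out a relation appearing at every $r'\geq r$. Your page $m$ was chosen from $N(r)$ (or from finitely many $N$'s), and $N(r')$ is unbounded. So at the level $r'$ where a putative relation lives, $E^m(K(r'))$ need not have collapsed, and ``coinciding with the $E^\infty$ versions'' tells you nothing there. For fixed $m$, finite-dimensionality does give a single level $r'_m$ at which the kernel of $\operatorname{colim}_{\mathfrak S}E^m(K(r))\to S^{E^m}_\alpha$ is attained. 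However, $r'_m$ depends on $m$, so choosing $m\geq N(r'_m)$ is circular. The finite-support argument therefore does not get around non-uniform collapse.

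What is missing is that you never need collapse at the later levels. The cobordism maps, and hence the symmetrizer $P=\frac{1}{|\mathfrak S|}\sum_\sigma\sigma$ that detects vanishing in $\operatorname{colim}_{\mathfrak S}$, are morphisms of spectral sequences. So they carry permanent cycles on page $m$ to permanent cycles and commute with the natural surjection from permanent cycles onto $E^\infty$. The argument then runs as follows:
\begin{enumerate}
\item Lift each $v_i'\in E^\infty(K(r))$ to a permanent cycle $v_i^m\in E^m(K(r))$.
\item If $\sum c_iv_i^m$ vanishes in $S^{E^m}_\alpha$, this is witnessed at some $r'$ by $P\bigl(E^m(f)(\sum c_iv_i^m)\bigr)=0$, where $f$ is the transition cobordism.
\item Projecting to $E^\infty(K(r'))$ gives $P\bigl(E^\infty(f)(\sum c_iv_i')\bigr)=0$, i.e.\ $\sum c_iv_i=0$ in $S^{E^\infty}_\alpha$.
\end{enumerate}
This works directly for every page $m\geq 2$, and it is the paper's argument: a vanishing combination of page-$n$ lifts forces the $E^\infty$ combination to vanish, and this persists through all maps of the filtered colimit. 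It also makes your detour through large $m$ and the iterated first inequality unnecessary.
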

\begin{proof}
    The first inequality follows immediately from the theorem. For the second inequality, choose linearly independent elements 
\begin{equation}
    v_{1},\dots ,v_{k} \in S_{\alpha  } ^{E^{\infty}}(X,L) 
\end{equation}
Choose r large enough and corresponding linearly independent representatives 
\begin{equation}
    v_{1}^{'}, \dots, v_{k}^{'} \in E^{\infty}(K(r))
\end{equation}
Now for any non-zero tuple $\{c_{i}\}$, for any $n\geq 2$,
\begin{equation}
    \sum_{i} c_{i} v_{i}^n =0 \implies \sum_{i} c_{i} v_{i}^{'} =0 
\end{equation}
where $v_{i}^{n}$ is a lift on the nth page. On top of that this lift must survive all maps under the filtered colimit. Hence, 
\begin{equation}
     rank ( S_{\alpha } ^{E^{n}}(X,L)) \geq S_{\alpha  } ^{E^{\infty}}(X,L)
\end{equation}

\end{proof}
\begin{Rmk}
    Suppose a spectral sequence $(E^{n},d^{n})$ coming from a KFT converges to a TQFT $Z$. Then the $E^{\infty}$ TQFT need not be the same as $Z$. For example, in the case of the Lee spectral sequence, the sphere with 3-dots, considered as a cobordism between empty links, induces the $\text{Id}$ map. While following the definition \ref{def:Einfty}, one can see that it gives the zero map on $E^\infty$. Hence, to apply the results proved so far, one needs to investigate the relation between these two TQFTs. We provide one such example in the next section. 
\end{Rmk}
\section{Lee Spectral Sequence and Applications}
The primary goal of this section is to discuss applications of our spectral sequence construction. First, we prove that the rank inequality between the Khovanov and Lee lasagna modules proved in \cite{rw24} comes from a spectral sequence of lasagna modules. Then we show that our construction of the spectral sequence satisfies a connect-sum formula, similar to the one introduced in \cite{mn22}.
\subsection{Lasagna spectral sequence of Lee Homology }
In this subsection, we prove that the rank inequality initially proved in \cite{rw24}, can be seen as a corollary of the results of the previous section; hence showing that the inequality does indeed come from a spectral sequence. For the Lee spectral sequence, we will discuss the relationship between $E^\infty$ TQFT and the Lee TQFT. We will also set up a general framework for deriving similar results for other KFTs and possibly prove various non-vanishing results.\

First, we set up the required background. Let $KhR_2$ be the $gl_2$ Khovanov-Rozansky Homology defined in \cite{kr04}. $KhR_2$ and the standard Khovanov homology $Kh$ are related by the following equivalence:
\begin{equation}\label{eq:KhR_2}
KhR_2^{h,q}(L)\cong Kh^{h,-q-w(L)}(-L),
\end{equation}
The $KhR_2$ homology is defined over the Frobenius algebra $V=\mathbb{k}[X]/(X^2)$. The co-multiplication and co-unit maps are  given by $$\Delta1=X\otimes 1+1\otimes X,\ \Delta X=X\otimes X,\ \epsilon1=0,\ \epsilon X=1.$$ Here $X$ has quantum degree $2$.
The $KhR_{Lee}$ deformation is defined over the Frobenius algebra $V_{Lee}=\mathbb{k}[X]/(X^2-1)$ with co-multiplication and co-unit given by: $$\Delta1=X\otimes1+1\otimes X,\ \Delta X=X\otimes X+1\otimes1,\ \epsilon1=0,\ \epsilon X=1.$$
Now, note that the underlying chain complex of both $KhR_2$ and $KhR_{Lee}$ is the same while the differential $d^{Lee}$ relates to the differential $d^{KhR_2}$ as follows:
$$d^{KhR_2} : CKhR_2 (L)_{q,n}  \longrightarrow  CKhR_2 (L)_{q,n+1}$$
$$d^{Lee} : CKhR_2 (L)_{q,n}  \rightarrow  CKhR_2 (L)_{q,n+1} \bigoplus CKhR_2 (L)_{q-4,n+1} $$ with
\begin{equation} \label{eq:leedifferential}
d^{Lee}= d^{KhR_2} + \delta
\end{equation}

Here $\delta$ shifts the degree down by 4.

\subsection*{Spectral Sequence from Filtered Chain complex : }
For most of the KFTs, the $E^\infty$ page as a TQFT is quite different from the TQFT it converges to. But in the presence of a special filtration such as the one for Lee homology, there is a way to relate these two TQFTs. To understand this, we first discuss the basics of such spectral sequences. Readers familiar with constructions of spectral sequences may skip this section. 
Note that we will be dealing with a decreasing filtration to be able to deal with $KhR_2$. Similar constructions and results for increasing filtration can be found in standard texts like \cite{mac1998categories}.

Now, consider a (decreasing) filtered chain complex:
\[\begin{tikzcd}
    \dots \arrow{r} & F_{p+1} C_{\bullet} \arrow{r} & F_{p}C_{\bullet} \arrow{r} & F_{p-1}C_{\bullet} \arrow{r} & \dots\\
\end{tikzcd}\]
Such a filtered complex has a differential $d$ satisfying $d(F_p C_n)\subset F_p C_{n-1}$

For such a filtered complex, we define :
\begin{itemize}
    \item  $Z^{r}_{p,q}= \{ x \in F_p C_{p+q} | dx \in F_{p+r}C_{p+q-1}\}/ F_{p+1}C_{p+q}$
    \item $B^{r}_{p,q}= d(F_{p-r-1}C_{p+q-1})$
    \item $E^{r}_{p,q}= Z^{r}_{p,q}/B^{r}_{p,q}  $
    
\end{itemize}

\begin{Thm}
A filtered chain complex gives rise to a spectral sequence as follows:
\begin{itemize}
\item The differential $d$ on $C_{\bullet}$ induces a differential $d^r$ on   $Z^{r}_{p,q}$ and on $E^{r}_{p,q}$ $(d^{r}: E^{r}_{p,q} \rightarrow E^{r}_{p+r,q-r-1} )$
\item $\ker(d^r)= Z^{r+1}_{p,q}$
\item We get a spectral sequence by considering $H_{*}(E^{r}_{p,q}, d^{r})\cong E^{r+1}_{p,q}$
\end{itemize}
\end{Thm}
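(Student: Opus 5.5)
We verify the three bullets with the explicit definitions $Z^r_{p,q}$, $B^r_{p,q}$, $E^r_{p,q}$, following the standard construction for increasing filtrations in \cite{mac1998categories}. The indices are reversed because the filtration is decreasing: $F_{p+1}C\subset F_pC$, and the differential lowers the homological degree while preserving $F_p$.

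First we make the definitions well-posed and fix the correct representatives. It is cleaner to work with the unquotiented subspaces
\[
\widetilde Z^r_{p,q}=\{x\in F_pC_{p+q}\mid dx\in F_{p+r}C_{p+q-1}\}.
\]
We then set
\[
E^r_{p,q}=\widetilde Z^r_{p,q}\big/\bigl(\widetilde Z^{r-1}_{p+1,q-1}+d\widetilde Z^{r-1}_{p-r+1,q+r-2}\bigr),
\]
and check that the paper's quotient $Z^r/B^r$ agrees with this. The only thing to confirm is that $B^r$ lands inside $Z^r$. An element $dy$ with $y$ in the appropriate filtration level lies in $F_p$, and $d(dy)=0$ lies in every $F_{p+r}$. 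So $B^r$ sits inside the cycles of $Z^r$.

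Next we define $d^r$. For $x\in\widetilde Z^r_{p,q}$ we have $dx\in F_{p+r}$ and $d(dx)=0$, so $dx\in\widetilde Z^{r}_{p+r,q-r-1}$. We set $d^r[x]=[dx]$. To see this is well defined, check the two parts of the denominator:
\begin{itemize}
\item elements of $\widetilde Z^{r-1}_{p+1,\ast}$ have differentials in $F_{p+r}$ that are boundaries of elements of the right filtration level, so they die in the target;
\item elements of the form $dy$ are sent to $0$.
\end{itemize}
Since $d^2=0$, we get $(d^r)^2=0$.

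The main step is computing $\ker d^r$ and $\operatorname{im} d^r$. Suppose $[x]\in\ker d^r$. Then $dx=z+dy$ with $z\in\widetilde Z^{r-1}_{p+r+1}$ and $y\in\widetilde Z^{r-1}_{p+1}$. Replacing $x$ by $x-y$, which has the same class, gives $d(x-y)=z\in F_{p+r+1}$, so $x-y\in\widetilde Z^{r+1}_{p}$. This shows that $\ker d^r$ is the image of $\widetilde Z^{r+1}_p$, which is the second bullet after identifying $Z^{r+1}$ with this image. The image of the incoming $d^r$ into $E^r_{p,q}$ is the image of $d\widetilde Z^r_{p-r}$.

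Finally we compute $H(E^r,d^r)$ and show the map
\[
\widetilde Z^{r+1}_p\;\longrightarrow\; \ker d^r/\operatorname{im} d^r
\]
is surjective with kernel $\widetilde Z^{r}_{p+1}+d\widetilde Z^{r}_{p-r}$. This is a routine modular-law computation. The inclusion $\widetilde Z^{r+1}\subset\widetilde Z^r$ gives
\[
\widetilde Z^{r+1}_p\cap\widetilde Z^{r-1}_{p+1}=\widetilde Z^{r}_{p+1}.
\]
The boundary terms are then absorbed into $d\widetilde Z^r_{p-r}$. Together these identify the quotient with $E^{r+1}_{p,q}$, which is the third bullet.

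I expect the main obstacle to be the bookkeeping in this step rather than any conceptual difficulty. The paper's stated $B^r=d(F_{p-r-1}C)$ omits the intersection with $F_p$ and the $F_{p+1}$-correction. So I would first restate $Z^r$ and $B^r$ in the $\widetilde Z$ form above, and only then carry out the modular-law check, verifying the index shifts against the convention $d^r:E^r_{p,q}\to E^r_{p+r,q-r-1}$.
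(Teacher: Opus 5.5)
Your proposal is correct and is the standard construction that the paper itself only cites (it gives no proof beyond the pointer to Mac Lane), and your replacement of the paper's literal $B^r=d(F_{p-r-1}C_{p+q-1})$ by the image of $d\widetilde Z^{r-1}_{p-r+1}$ is genuinely needed, not cosmetic. One index slip to fix, inherited from the paper: since $d$ lowers degree, the boundary term in $E^r_{p,q}$ is $d\widetilde Z^{r-1}_{p-r+1,\,q+r}$, with sources in total degree $p+q+1$, not $d\widetilde Z^{r-1}_{p-r+1,\,q+r-2}$.
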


Now, coming back to equation \ref{eq:leedifferential}, noting that $d^{Lee}$ gives a filtration, we get a corresponding spectral sequence. Recall that the Khovanov differential preserves the grading, hence we get $$d^{KhR_2}(F_{p}C_{\bullet}) \not\subset  F_{p+1}C_{\bullet}$$ and $$\delta(F_{p}C_{\bullet})\subset F_{p-4}C_{\bullet} \subset F_{p-2}C_{\bullet}$$ Here $d^{KhR_2}$ preserves the quantum degree while $\delta$ reduces the quantum degree by 4.
$$ Z^{2}_{p,q}= \{ x \in F_p C_{p+q} | d^{Lee}x \in F_{p-2}C_{p+q-1}\}/ F_{p+1}C_{p+q} =  \{ x \in F_p C_{p+q} | d^{KhR_2}(x) =0 \}$$ Hence the $E^{2}$ page of our spectral sequence is $KhR_2$ while, $$ E^{\infty}_{p,\bullet}\cong G_{p}(KhR_{Lee})$$ 
Consider a cobordism $S: L_{1} \rightarrow L_2 $ representing a map between two skeins. Then the map $KhR_2 (S) $ is degree preserving (up to a few pre-adjusted degree shifts depending only on the genus of the skeins and the number of dots on the cobordism) \footnote{To be precise, to a skein $[\Sigma] $ with surface $\Sigma$ and input links $L_{i}$, on the n-th page we assign $\otimes_{i}E^{n}(L_{i}) $ while shifting the homological degree by $(-\chi(\Sigma)+2(\text{\# dots on }\Sigma))$.} Note that the required degree shift is already incorporated into the 2-handle-body formula \ref{prop:2_hdby}. Hence, on any page $E^n$, a map in the skein category induces a degree-preserving map.
Hence, by the definition of $E^{\infty}$ TQFT, the map $E^{\infty}(S)$ is the degree preserving part of $KhR_{Lee}(S)$. Hence, we get the following correspondence:

\begin{Thm}
    For the functorial spectral sequence of TQFTs from $KhR_2$ to $KhR_{Lee}$ we have:
    \begin{equation*}
        S_{\alpha, q}^{E^{\infty}}(X,L) \cong gr_{q} S_{\alpha  } ^{KhR_{Lee}}(X,L) \ \ \forall q \in \mathbb{Z}
    \end{equation*} 
    \begin{equation*}
        S_{\alpha  } ^{E^{\infty}}(X,L) \cong \bigoplus_{h,q\in \mathbb{Z}} gr_{q} S_{\alpha  } ^{KhR_{Lee}}(X,L)
    \end{equation*}
\end{Thm}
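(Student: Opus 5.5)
We compare both sides through the 2-handlebody description. By the Corollary to Proposition \ref{thm:2hdlbdy}, applied to the page $E^\infty$ (which is a TQFT, since for each pair of links we evaluate on a page where both spectral sequences have converged, and Lemma \ref{fg} makes these maps compatible), we have
\[
S^{E^{\infty}}_{\alpha}(X,L)\cong \text{Fcolim}_{r}\,\text{colim}_{\mathfrak{S}_{\|\alpha\|+2r}} E^{\infty}(K(r)).
\]
The same colimit description holds for $S^{KhR_{Lee}}_\alpha(X,L)$, with the relations of (\ref{eq:sim}) adapted to Lee: the argument of Proposition \ref{thm:2hdlbdy} only uses the braid formula (\ref{eq:braid}) and the relations (\ref{eq:BNrels}) up to lower-order terms, together with the invertibility of $2$. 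The first step, then, is to write both sides as filtered colimits over $r$ of symmetric-group coinvariants of the cables $K(r)$, with transition maps given by the dotted annulus maps $S_{Z^\bullet(r)}$.

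The second step works at a fixed cable. For every link $K(r)$ the spectral sequence of the Lee filtration gives
\[
E^\infty_{q}(K(r))\cong gr_q KhR_{Lee}(K(r)).
\]
We take as given the preceding discussion: for a cobordism $S$, after the degree shifts already built into the 2-handlebody formula, $E^\infty(S)$ is the degree-preserving part of $KhR_{Lee}(S)$. Equivalently, $E^\infty(S)=gr(KhR_{Lee}(S))$. Therefore the isomorphism above is natural in $E^\infty$ with respect to the braid (symmetric group) action and to the annulus maps. Since $\mathbb{k}$ has characteristic zero, the coinvariants are identified with the invariants via the averaging idempotent, exactly as in the Lemma preceding the proof of Theorem \ref{thm: main}. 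Taking invariants of a finite group commutes with passing to the associated graded: averaging is filtration-preserving and splits each filtration level. This gives $gr_q\big(KhR_{Lee}(K(r))^{\mathfrak{S}}\big)\cong \big(gr_q KhR_{Lee}(K(r))\big)^{\mathfrak{S}}$.

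The third step is the filtered colimit over $r$, and I expect it to be the main obstacle. Filtered colimits are exact in $Vec_{\mathbb{k}}$ and the transition maps are filtered, so for the induced filtration $F_p\,\text{colim}=\text{colim}\,F_p$ we get $gr_q(\text{Fcolim}\,V_r)\cong \text{Fcolim}\,gr_q V_r$: the colimit of the short exact sequences $0\to F_{q+1}\to F_q\to gr_q\to 0$ stays exact. What needs care is whether the filtration on $S^{KhR_{Lee}}_\alpha$ used in the statement is this induced colimit filtration, i.e. whether the quantum shifts $\{(1-N)(2r+|\alpha|)\}$ make each transition map filtered of degree exactly $0$. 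I would verify this first for the annulus map, using that the dotted annulus has $\chi=0$ and one dot, matching the homological and quantum shift recorded in the footnote. With that in hand, both sides are the same filtered colimit in each degree $q$, which proves the first isomorphism. The second isomorphism follows by summing over $q$ (and over the homological degrees $h$, which every map preserves), since $S^{E^\infty}_\alpha$ is graded by $q$.
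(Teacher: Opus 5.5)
Your proposal is correct and takes essentially the paper's route. You reduce both sides to the cable colimit via the 2-handlebody formula, use that $E^\infty(S)$ is the associated graded of $KhR_{Lee}(S)$ after the built-in shifts, and commute $gr_q$ with the colimit. Where the paper does the last step by an element-chase (surjectivity and injectivity of the ``obvious map'' checked on lifts to a fixed $K(r_0)$), you use exactness of filtered colimits. You also handle the non-filtered symmetric-group coinvariants explicitly by averaging, which the paper passes over.

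The one thing to fix is the phrase ``up to lower-order terms'' in your first step. The Lee colimit description (the same input the paper's proof assumes) and your averaging step both need exact statements on $KhR_{Lee}(K(r))$. In particular, the braid action there must factor through $\mathfrak{S}_{\|\alpha\|+2r}$. This does hold: pure braids act diagonally on Lee canonical generators and act as the identity on $E^\infty = gr\,KhR_{Lee}$, so every eigenvalue is $1$.
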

\begin{proof}
    We prove this by a careful examination of the Lee spectral sequence. After an appropriate grading shift as in the 2-handlebody formula for skein lasagna modules, we see that for a cobordism $\Sigma:L\rightarrow L^{'}$, the map induced on $E^{\infty}$ page of the Lee Spectral sequence corresponds to the filtration degree preserving map on $KhR_{Lee}$. 
    Now, note that the filtration function on $S^{Lee}$ is the minimum of filtration on $[\Sigma,\nu]$ over all the possible equivalence classes. 
    So if $q(v)\neq -\infty,\ v\in S^{Lee}(X,L) $, then $\exists$ a lift $v^{'}\in KhR_{Lee}(K(r_{0}))$ for some $r_{0}$, so that $q(v^{'})=q(v)$ and the degree of all equivalent elements in $r\geq r_{0}$ is also $q(v)$, since otherwise the filtration degree must go to $-\infty$. 
    We consider the obvious map :
    \begin{equation*}
        F: \bigoplus_ {h,q\in \mathbb{Z}} gr_{q} S_{\alpha  } ^{KhR_{Lee}}(X,L) \rightarrow S_{\alpha  } ^{E^{\infty}}(X,L)
    \end{equation*}

By the degree preservation argument above, the map is clearly surjective. Now, consider a set of elements $v_{1}^{'}\dots v_{k}^{'}\in gr_q S^{Lee}_{\alpha,h} $, linearly independent, for a fixed $q$. 
For any non-zero tuple $\{c_i\}$, consider  $\sum c_iv_{i}^{'}$ for some lifts $v_{i}^{'}\in KhR_{Lee}(K(r_{0}))$ so that $q(v_{i}^{'})$ is the same as the quantum filtration degree in the Lee-lasagna Module. The corresponding element in $E^{\infty}(K(r_{0}))$ is non-zero and must include all the terms in the filtered colimit, otherwise contradicting the linear independence of $\{v_i\}_{i=1}^{k}$
Hence $\text{Ker}(F)=0$, proving that $F$ is an isomorphism.
\end{proof}
\begin{Cor}
    \begin{equation*}
        rank(S^{2}(X,L,\mathbb{k}))\geq rank (\bigoplus_{h,q\in\mathbb{Z}} gr_{q}S^{Lee}(X,L,\mathbb{k}))
    \end{equation*}
\end{Cor}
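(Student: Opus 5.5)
The plan is to combine the rank inequalities already established with the identification of the $E^\infty$ lasagna module. Specialize the preceding corollary on ranks of the $LSS$ to the Lee Khovanov--Floer theory, whose $E^2$ page is $KhR_2$ (the Khovanov--Rozansky $\mathfrak{gl}_2$ homology). This gives
\[
\operatorname{rank} S^{E^2}_\alpha(X,L)\ \geq\ \operatorname{rank} S^{E^n}_\alpha(X,L)\ \geq\ \operatorname{rank} S^{E^\infty}_\alpha(X,L)\quad \text{for all } n\geq 2 .
\]
Since $E^2=KhR_2$ as a TQFT, including the maps induced by cobordisms, the left-hand side is exactly $\operatorname{rank} S^{2}(X,L,\mathbb{k})$. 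Summing over $\alpha\in H_2(X,L)$ gives the ungraded statement, provided we work degreewise, or accept infinite cardinal ranks.

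Next, I would invoke the theorem just proved for the Lee spectral sequence:
\[
S^{E^\infty}_\alpha(X,L)\ \cong\ \bigoplus_{h,q} gr_q S^{KhR_{Lee}}_\alpha(X,L).
\]
Substituting this into the right-hand side of the chain of inequalities yields the claimed bound. To make the argument robust, I would run it in each fixed bidegree $(h,q)$. The differentials $\overline{d_n}$ on the lasagna modules are induced by the $d_n$, which are homogeneous of a fixed bidegree. The isomorphism $H_*(S^{E^n})\cong S^{E^{n+1}}$ from Theorem~\ref{thm: main} therefore respects the grading, and each graded piece of $S^{E^{n+1}}$ is a subquotient of the corresponding graded piece of $S^{E^n}$. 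This gives a rank inequality in each bidegree, and summing over bidegrees gives the total.

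The main obstacle is passing from the finite pages to $E^\infty$. For a given link the Lee spectral sequence collapses at a finite page, but that page depends on the cable $K(r)$, and there is no uniform bound as $r\to\infty$. So $S^{E^\infty}$ is not literally some $S^{E^n}$. Here I would rely on the second inequality in the previous corollary, whose proof chooses $r$ large, represents linearly independent classes in $E^\infty(K(r))$, and lifts them to $E^n(K(r))$ compatibly with the filtered colimit. If that step needs reinforcing, I would argue in a fixed bidegree $(h,q)$: there $E^n(K(r))$ stabilizes in $n$ for each $r$, because the Lee differentials $d_n$ change $q$ by $4(n-1)$ and only finitely many are nonzero for a fixed cable. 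Linear independence in the filtered colimit is then detected at a single finite stage $r_0$, which reduces the comparison to the finite page $E^{n(r_0)}$ and the first inequality.
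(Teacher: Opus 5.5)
Your proposal is correct and follows the paper's route: specialize the chain $\operatorname{rank} S^{E^2}_\alpha \geq \operatorname{rank} S^{E^n}_\alpha \geq \operatorname{rank} S^{E^\infty}_\alpha$ to the Lee theory (where $E^2 = KhR_2$), then substitute the identification $S^{E^\infty}_\alpha \cong \bigoplus_{h,q} gr_q S^{KhR_{Lee}}_\alpha$, bidegree by bidegree as the paper remarks. Your fallback for the $E^\infty$ step is essentially the paper's argument for the second inequality; however, the operative fact is not that independence is ``detected at a single stage''. It is that the chosen classes are permanent cycles on the finite page $E^{n(r_0)}$, so any relation among their images at a later cable $K(r)$ on that page descends to a relation in $E^\infty(K(r))$.
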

It is easy to check that this inequality is true in individual quantum degrees as well, as long as it's not the $q=-\infty$ grading.
\subsection{Connect Sum Formula}
For manifolds with boundary $X_1$ and $X_2$  and links $L_i \subset \partial X_i$, the following formula was proved in \cite{mn22}: $$S^{Z}(X_1 \natural X_2 , L_1 \sqcup L_2 , \mathbb{F}) \ \cong \  S^{Z}(X_1 , L_1, \mathbb{F}) \bigotimes S^{Z}(X_2 , L_2, \mathbb{F})$$
We prove a similar formula for our spectral sequence construction. To be more precise, we will prove a pseudo-finality result for the category of skeins. We will construct a pseudo-final functor that commutes with differentials. 
Before we state the relevant result, we define a functor:  $$F: \overline{C(X_1 , L_1)} \ \bigtimes \ \overline{C(X_2 , L_2)} \longrightarrow \overline{C(X_1 \natural X_2 , L_1 \sqcup L_2 )} $$  First take the boundary connect sum away from the links $L_i$. For each $X_i$, the subspace $X_i - N(\partial X_i) $ embeds into $X_1 \natural X_2$. Now, for $([\Sigma_{X_1}],[\Sigma_{X_2}])\in\overline{C(X_1 , L_1)} \ \bigtimes \ \overline{C(X_2 , L_2)}$, we embed each $[\Sigma_{X_i}]$ in the subspace corresponding to  $X_i -N(\partial X_i )$  inside $X_1 \natural X_2$. Then we add copies of $L_i\times I \subset N(\partial X_i)$ giving us an element of $\overline{C(X_1 \natural X_2 , L_1 \sqcup L_2 )}$. Henceforth, we will refer to the functor $F$ as the ``inclusion functor".
\begin{Thm}
    For a pair of four manifolds and boundary links $(X_i , L_i) \ , \ i=1,2 $ The inclusion functor  $$\overline{C(X_1 , L_1)} \ \bigtimes \ \overline{C(X_2 , L_2)} \longrightarrow \overline{C(X_1 \natural X_2 , L_1 \sqcup L_2 )} $$ is pseudo-final.
\end{Thm}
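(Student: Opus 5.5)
We verify the two conditions of Definition \ref{def:final} after passing to a suitable $E^n$-invariant quotient of $\overline{C(X_1\natural X_2, L_1\sqcup L_2)}$. As in the proof that $C(X,L)\to\overline{C(X,L)}$ is pseudo-final, the quotient will impose $f_{\overline{C}}\circ g_{\overline{C}}\sim \text{Id}$. Every $E^n$ inverts the maps $f_{\overline{C}}$ by construction of $\overline{Z}$, so this relation is $E^n$-invariant. Theorem \ref{thm:quotientcategory} then allows us to work in the quotient, and its naturality statement gives compatibility with the universal arrows $\overline{d_n}$. Let $B$ denote the separating $3$-ball along which the boundary connected sum is formed, and $N(B)$ a product neighbourhood $B\times[-1,1]$ of it.

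\textbf{Step 1 (existence of arrows).} Let $[\Sigma]$ be a skein in $X_1\natural X_2$. First apply a map in $G_{\overline{C}}$ to shrink its input balls until they avoid $N(B)$. The surface $\Sigma$ may still meet $B$ transversally in a link $J\subset \text{int}(B)$. Choose a small input ball $B_0$ containing $N(B)\cap\Sigma$, chosen so that $B_0\cap\partial(X_1\natural X_2)$ lies away from $L_1\sqcup L_2$. The inclusion of $B_0$ as a new input ball, together with the identity cobordism outside, gives a morphism in $C$ to a skein that splits along $B$. Its remaining surface in each half $X_i$ can be isotoped rel boundary into $X_i-N(\partial X_i)$ and completed by the collar $L_i\times I$. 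The ball $B_0$ straddles $B$, so we cannot use it directly. Instead we use a $G_{\overline{C}}$-type map to replace $B_0$ by two balls $B_0^{(1)}\subset X_1$ and $B_0^{(2)}\subset X_2$, connected by the tube along $B$. This is precisely the connect-sum quotient of the footnote in Definition \ref{def:newcategory}. The resulting object lies in the image of $F$, which establishes condition (1).

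\textbf{Step 2 (connecting two arrows).} Suppose $[\Sigma]\to F(\Sigma_1,\Sigma_2)$ and $[\Sigma]\to F(\Sigma_1',\Sigma_2')$ are two such arrows. Their difference is a $1$-parameter family of choices, namely isotopies of $\Sigma$ relative to $B$ and choices of the cutting ball. A generic isotopy crosses $B$ only through births and deaths of unknotted circles in $J$, and through saddles. Between these events, the two arrows differ by morphisms in $\overline{C(X_1,L_1)}\times\overline{C(X_2,L_2)}$ acting on each factor. At each event we need a zigzag as in Definition \ref{def:final}. For births and deaths, we move the disc into one side by an isotopy supported near $B$. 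For saddles, we absorb the tube through $B$ into the enlarged input ball, which lies in the image of $F$ once split as in Step 1. The sweep-around move, which acts as the identity on every page (Lemma \ref{fg}), guarantees that the choice of tube path in the connect-sum quotient does not matter. If needed, we would also pass to convex closures via Theorem \ref{thm:convexclosure}, as in the proof of Proposition \ref{thm:2hdlbdy}.

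\textbf{Main obstacle.} We expect the main difficulty to be Step 2, specifically showing that the zigzags can be chosen inside the image of $F$. An arrow that passes through a ball straddling $B$ is not in the image of $F$. Such a ball must therefore be traded, modulo the relation $f_{\overline{C}}\circ g_{\overline{C}}\sim\text{Id}$, for the split pair of balls. Our first approach would be to verify that this trade commutes with the relevant cobordisms on every page $E^n$. The verification uses the monoidality axiom (3) of Definition \ref{KFT}, which gives compatibility with tensor products on all pages via Lemma \ref{ij}. Once pseudo-finality holds, the argument in the Remark following Theorem \ref{thm: main} shows that the induced isomorphism intertwines $\overline{d_n}$ with $\overline{d_n}\otimes 1+1\otimes\overline{d_n}$.
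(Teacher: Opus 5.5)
\textbf{Where the proposal breaks down.} Your Step 1 is the paper's construction: shrink the input balls off $B$, isotope $\Sigma$ to be transverse to $B$ so that it is $L\times I$ near $B$, add an input ball around this neck, and neck-cut it by a $G_{\overline{C}}$-map into one ball on each side. The gap is in Step 2.

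\begin{itemize}
\item You assert that between Morse events the two cut skeins ``differ by morphisms in $\overline{C(X_1,L_1)}\times\overline{C(X_2,L_2)}$''. This is precisely what has to be proved. Even a pure isotopy of the link $\Sigma_t\cap B$ changes the boundary links of the cut balls in both factors.
\item Your saddle case fails as stated. A ball straddling $B$ that contains a saddle cannot be ``split as in Step 1''. A map in $G_{\overline{C}}$ requires the surface between the big ball and the small balls to be a product $\sqcup L_i\times I$ after the connect-sum quotient. The relation $f\circ g\sim\mathrm{Id}$ only lets you trade straddling balls of that product form.
\item Checking compatibility on $E^n$ via monoidality cannot fill this gap. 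Condition (2) of Definition~\ref{def:final} asks for zigzags of arrows in the (quotient) index category, and no property of the functor produces missing arrows.
\item You also never explain why the zigzags you propose commute.
\end{itemize}

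\textbf{The missing idea.} The paper absorbs the whole trace of the isotopy into a collar of $B$, with no event-by-event analysis.
\begin{itemize}
\item Let $\Sigma_{(t)}$ be the isotopy between the two transverse positions $\Sigma_0,\Sigma_1$. Let $J=\bigcup_t(\Sigma_{(t)}\cap B)\times\{t\}$ be its trace, a cobordism from $L=\Sigma_0\cap B$ to $L'=\Sigma_1\cap B$.
\item Placing $J$ in the collar $B\times[-1,0]$ gives $\Sigma_0^1\cup J\simeq\Sigma_1^1$, by an isotopy supported in $X_1$.
\item Placing $J$ in $B\times[0,1]$ gives $\Sigma_0^2\simeq J\cup\Sigma_1^2$, by an isotopy supported in $X_2$.
\item Consequently, after neck-cutting, the cut of $\Sigma_0$ and the cut of $\Sigma_1$ both map to a common object $F(\Sigma_0^1,\Sigma_1^2)$. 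Each map is a morphism of the product category: the identity in one factor and the cobordism $J$ (read in the appropriate direction) in the other.
\item Both composites from $[\Sigma]$ are given by the same cobordism $J$ in the same place. So this cospan is the required commuting zigzag.
\end{itemize}
Births, deaths, saddles and link isotopies are all handled at once by this argument, and no passage to convex closures is needed.
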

\begin{proof}
 Throughout this proof, we will use the following convention for the diagrams: The manifold $X_1$ will be in green while $X_2$ will be in blue, and the intersection of the two will be a line representing the $B^3$ along which we will take the boundary connect sum. \\
\begin{figure}
 \centering

\begin{tikzpicture}[scale=2, anchorbase]
\filldraw[opacity=0.3, green] (-2,0) rectangle (2,2);
\filldraw[opacity=0.3, blue] (2,0) rectangle (6,2);
\draw[very thick] (2,0) to (2,2);
\node[opacity=1] at (4,1) {$X_2$};
\node[opacity=1] at (0,1) {$X_1$};
\node[opacity=1] at (3,1.7) {$ \text{connect sum } B^3$};
\node[opacity=1] at (2.2,1.6) {$\swarrow$};
    
\end{tikzpicture}
\caption{Convention for the connect sum computations}
\label{fig:bluegreen}
\end{figure}
Let us start with a skein $[\Sigma] \in \overline{C(X_1 \natural X_2 , L_1 \sqcup L_2 )}$ . We first use a map in $G_{\overline{C(X_1 \natural X_2 , L_1 \sqcup L_2 )}}$ to make the input balls smaller and make sure none of them intersect the connected sum $B^3$ (henceforth denoted $B$). We then use a morphism given by an isotopy to make the surface transversal to $B$. Hence, locally, at the intersection, the surface will be of the form ``$L\times I$". Then, we use another map in $G_{\overline{C(X_1 \natural X_2 , L_1 \sqcup L_2 )}}$ to ``neck cut" the surface and introduce two input balls. The procedure is demonstrated in the figure \ref{fig:connectsum1}. Note that the last two maps are from the subclass $G_{\overline{C(X_1 \natural X_2 , L_1 \sqcup L_2 )}}$ depicted in the figure \ref{fig:newcatfg}. 
Finally, we prove that the choices we made during this procedure are related through maps in the product category, thereby establishing the first step of proving finality as required by Definition \ref{def:final}.  Assume that we started with a skein $[\Sigma]$ with input balls disjoint from $B$. Suppose the first morphism is given by an isotopy that takes us to $[\Sigma_{0}]$, while the other morphism takes us to $[\Sigma_1]$. Concatenating these two, consider $\Sigma_{(t)}$ be the isotopy from $\Sigma_0$ to $\Sigma_1$. Then, consider the cobordism $J=(\Sigma_{(t)} \ \cap \ B  \ )\times \ \{t\}$ between the two links $\Sigma_0 \  \cap \ B$ and $\Sigma_1 \  \cap \ B$.  Let $\Sigma_j^i$  be $\Sigma_j |_{X_i}$. Then one can see the following two are isotopic by an isotopy supported in $X_1$:
$$\Sigma_0^1\ \cup \ J  \ \cong_{isotopy} \Sigma_1^1 $$To be precise, by $\Sigma_0^1\ \cup \ J $, we mean the following : 
\par Fix a tubular neighborhood $B\times [-1,1]$ of the connect sum ball, $I_{t}$ be the isotopy in $X_1$ taking $X_1$ to $X_1-(B\times [-1,0])$ . Then embed J inside the collar neighborhood $B \times [-1,0]$. The surface of interest is $I_{\{1\}}(\Sigma_{0}^{1})\ \cup \ J$. 
\par The explicit isotopy at time $t$ is given by the surface : $$I_{\{t\}}(\Sigma_0^1) \ \bigcup \ (\cup_{s=0}^{t} (\Sigma_{\{s\}} \cap B)\ \times \ \{s\})$$
Similarly, by embedding $J$ in $B \ \times  \ [0,1]$, one can prove :
$$\Sigma_0^2 \ \cong_{isotopic}  J \ \cup \Sigma_1^2 $$
Hence, we can go from both the $\Sigma_j$'s  to $\Sigma_0^1 \ \cup \ J \ \cup \ \Sigma_1^2$ through a map given by an isotopy in a fixed  $X_i$.  Hence, after neck cutting, we get a commutative diagram as required by the finality condition in \ref{def:final}. These two maps are depicted in the Figure \ref{fig:welldefbluegreen}, where we are denoting $\Sigma_0 \ \cap \ B$ by $L$ and $\Sigma_1 \ \cap \ B$ by $L^{'}$.

\begin{figure}
    \centering
    
\begin{tikzpicture}[scale=2, anchorbase]
\filldraw[opacity=0.3, green] (-2,0) rectangle (2,2);
\filldraw[opacity=0.3, blue] (2,0) rectangle (6,2);
\draw[very thick] (2,0) to (2,2);
\draw[thick]   (1,1.75) to [in =90 , out= 0 ](2,1.25);
\draw[thick]    (2,1.25) to [in =180 , out =270]   (3,1);
\draw[thick]   (1,1.25) to [in =90 , out= 0 ](2,0.75);
\draw[thick]    (2,0.75) to [in =180 , out =270]   (3,0.25);
\draw[thick]    (3,0.75) to [in=90, out=180] (2.5,0.6125);
\draw[thick]    (2.5,0.6125) to [in=180, out=-90] (3,0.5);
\draw[thick] (1,1.5) ellipse (0.1 and 0.25);
\draw[thick] (3,0.875) ellipse (0.05 and 0.125);
\draw[thick] (3,0.375) ellipse (0.05 and 0.125);
\end{tikzpicture} 
\\
$\bigdownarrow{\text{isotopy to make $\Sigma$ transversal to $B$ } }$  \\
\begin{tikzpicture}[scale=2, anchorbase]
\filldraw[opacity=0.3, green] (-2,0) rectangle (2,2);
\filldraw[opacity=0.3, blue] (2,0) rectangle (6,2);
\draw[very thick] (2,0) to (2,2);
\begin{scope}[shift={(0.5,0)}]
\draw[thick]   (1,1.75) to [in =90 , out= 0 ](2,1.25);
\draw[thick]    (2,1.25) to [in =180 , out =270]   (3,1);
\draw[thick]   (1,1.25) to [in =90 , out= 0 ](2,0.75);
\draw[thick]    (2,0.75) to [in =180 , out =270]   (3,0.25);
\draw[thick]    (3,0.75) to [in=90, out=180] (2.5,0.6125);
\draw[thick]    (2.5,0.6125) to [in=180, out=-90] (3,0.5);
\draw[thick] (1,1.5) ellipse (0.1 and 0.25);
\draw[thick] (3,0.875) ellipse (0.05 and 0.125);
\draw[thick] (3,0.375) ellipse (0.05 and 0.125);

\end{scope}

\end{tikzpicture} \\ $\bigdownarrow{\text{adding an extra input ball by using a morphsim given by $L\times I$}}$ \\
\begin{tikzpicture}[scale=2, anchorbase]
\filldraw[opacity=0.3, green] (-2,0) rectangle (2,2);
\filldraw[opacity=0.3, blue] (2,0) rectangle (6,2);
\draw[very thick] (2,0) to (2,2);
\begin{scope}[shift={(0.5,0)}]
   \draw[thick]   (1,1.75) to [in =90 , out= 0 ](2,1.25);
\draw[thick]    (2,1.25) to [in =180 , out =270]   (3,1);
\draw[thick]   (1,1.25) to [in =90 , out= 0 ](2,0.75);
\draw[thick]    (2,0.75) to [in =180 , out =270]   (3,0.25);
\draw[thick]    (3,0.75) to [in=90, out=180] (2.5,0.6125);
\draw[thick]    (2.5,0.6125) to [in=180, out=-90] (3,0.5);
\draw[thick] (1,1.5) ellipse (0.1 and 0.25);
\draw[thick] (3,0.875) ellipse (0.05 and 0.125);
\draw[thick] (3,0.375) ellipse (0.05 and 0.125);

\end{scope}
\sphere{2}{1.45}{0.25}[thick]
\end{tikzpicture} \\ $\bigdownarrow{\text{``neck cutting" by using a map in $G_{\overline{C(X_1\natural X_2, L_1 \sqcup L_2)}}$}}$ \\
\begin{tikzpicture}[scale=2, anchorbase]
\filldraw[opacity=0.3, green] (-2,0) rectangle (2,2);
\filldraw[opacity=0.3, blue] (2,0) rectangle (6,2);
\draw[very thick] (2,0) to (2,2);
\begin{scope}[shift={(1.2,0)}]
\draw[thick]   (1,1.75) to [in =90 , out= 0 ](2,1.25);
\draw[thick]    (2,1.25) to [in =180 , out =270]   (3,1);
\draw[thick]   (1,1.25) to [in =90 , out= 0 ](2,0.75);
\draw[thick]    (2,0.75) to [in =180 , out =270]   (3,0.25);
\draw[thick]    (3,0.75) to [in=90, out=180] (2.5,0.6125);
\draw[thick]    (2.5,0.6125) to [in=180, out=-90] (3,0.5);

\draw[thick] (-0.25,1.75) to (0.5,1.75);
\draw[thick] (-0.25,1.25) to (0.5,1.25);
\draw[thick] (3,0.875) ellipse (0.05 and 0.125);
\draw[thick] (3,0.375) ellipse (0.05 and 0.125);
\end{scope}
\begin{scope}[shift={(1.2,0.1)}]

\sphere{1.1}{1.4}{0.25}[thick]
\sphere{0.5}{1.4}{0.25}[thick]
\end{scope}

\end{tikzpicture} 
    
\caption{Connect sum formula algorithm}
 \label{fig:connectsum1}
\end{figure}
\begin{figure}
\centering

\begin{tikzpicture}[scale=1, anchorbase]
\filldraw[opacity=0.3, green] (-2,0) rectangle (2,2);
\filldraw[opacity=0.3, blue] (2,0) rectangle (6,2);
\draw[very thick] (2,0) to (2,2);
\draw[thick]   (1,1.75) to [in =90 , out= 0 ](2,1.25);
\draw[thick]    (2,1.25) to [in =180 , out =270]   (3,1);
\draw[thick]   (1,1.25) to [in =90 , out= 0 ](2,0.75);
\draw[thick]    (2,0.75) to [in =180 , out =270]   (3,0.25);
\draw[thick]    (3,0.75) to [in=90, out=180] (2.5,0.6125);
\draw[thick]    (2.5,0.6125) to [in=180, out=-90] (3,0.5);
\draw[thick] (1,1.5) ellipse (0.1 and 0.25);
\draw[thick] (3,0.875) ellipse (0.05 and 0.125);
\draw[thick] (3,0.375) ellipse (0.05 and 0.125);

\end{tikzpicture}  \\ $\bigdownarrow{\text{isotopy to get transversal intersection: locally $L\times I$}} $\\
\begin{tikzpicture}[scale=1, anchorbase]
\filldraw[opacity=0.3, green] (-2,0) rectangle (2,2);
\filldraw[opacity=0.3, blue] (2,0) rectangle (6,2);
\draw[very thick] (2,0) to (2,2);
\begin{scope}[shift={(0.5,0)}]
   \draw[thick]   (1,1.75) to [in =90 , out= 0 ](2,1.25);
\draw[thick]    (2,1.25) to [in =180 , out =270]   (3,1);
\draw[thick]   (1,1.25) to [in =90 , out= 0 ](2,0.75);
\draw[thick]    (2,0.75) to [in =180 , out =270]   (3,0.25);
\draw[thick]    (3,0.75) to [in=90, out=180] (2.5,0.6125);
\draw[thick]    (2.5,0.6125) to [in=180, out=-90] (3,0.5);
\draw[thick] (1,1.5) ellipse (0.1 and 0.25);
\draw[thick] (3,0.875) ellipse (0.05 and 0.125);
\draw[thick] (3,0.375) ellipse (0.05 and 0.125);

\end{scope}

\end{tikzpicture} 
\\ $\bigdownarrow{\text{``neck cutting" by using a map in $G_{\overline{C(X_1\natural X_2, L_1 \sqcup L_2)}}$}}$ \\
\begin{tikzpicture}[scale=1, anchorbase]
\filldraw[opacity=0.3, green] (-2,0) rectangle (2,2);
\filldraw[opacity=0.3, blue] (2,0) rectangle (6,2);
\draw[very thick] (2,0) to (2,2);
\begin{scope}[shift={(1.2,0)}]
\draw[thick]   (1,1.75) to [in =90 , out= 0 ](2,1.25);
\draw[thick]    (2,1.25) to [in =180 , out =270]   (3,1);
\draw[thick]   (1,1.25) to [in =90 , out= 0 ](2,0.75);
\draw[thick]    (2,0.75) to [in =180 , out =270]   (3,0.25);
\draw[thick]    (3,0.75) to [in=90, out=180] (2.5,0.6125);
\draw[thick]    (2.5,0.6125) to [in=180, out=-90] (3,0.5);
\draw[thick] (-0.25,1.75) to (0.5,1.75);
\draw[thick] (-0.25,1.25) to (0.5,1.25);
\draw[thick] (3,0.875) ellipse (0.05 and 0.125);
\draw[thick] (3,0.375) ellipse (0.05 and 0.125);
\end{scope}
\begin{scope}[shift={(1.2,0.1)}]

\sphere{1.1}{1.4}{0.25}[thick]
\sphere{0.5}{1.4}{0.25}[thick]
\end{scope}
\end{tikzpicture} \\ $\bigdownarrow{\text{Another ``neck cut" at $L^{'}\times I$ }} $\\

\begin{tikzpicture}[scale=1, anchorbase]
\filldraw[opacity=0.3, green] (-2,0) rectangle (2,2);
\filldraw[opacity=0.3, blue] (2,0) rectangle (6,2);
\draw[very thick] (2,0) to (2,2);
\begin{scope}[shift={(1.2,0)}]
   \draw[thick]   (1,1.75) to [in =90 , out= 0 ](2,1.25);
\draw[thick]    (2,1.25) to [in =180 , out =270]   (3,1);
\draw[thick]   (1,1.25) to [in =90 , out= 0 ](2,0.75);
\draw[thick]    (2,0.75) to [in =180 , out =270]   (3,0.25);
\draw[thick]    (3,0.75) to [in=90, out=180] (2.5,0.6125);
\draw[thick]    (2.5,0.6125) to [in=180, out=-90] (3,0.5);

\draw[thick] (-0.25,1.75) to (0.5,1.75);
\draw[thick] (-0.25,1.25) to (0.5,1.25);
\draw[thick] (4.25,1) to (4.7,1);
\draw[thick] (4.3,0.75) to (4.7,0.75);
\draw[thick] (4.25,0.5) to (4.7,0.5);
\draw[thick] (4.3,0.25) to (4.7,0.25);
\end{scope}
\begin{scope}[shift={(1.2,-5.9)}]
\sphere{3.15}{6.75}{0.2}[thick]
\sphere{3.15}{6.25}{0.2}[thick]

\sphere{4.15}{6.75}{0.2}[thick]
\sphere{4.15}{6.25}{0.2}[thick]
\sphere{1.1}{7.4}{0.25}[thick]
\sphere{0.5}{7.4}{0.25}[thick]
\end{scope}
\end{tikzpicture} \\ $\bigdownarrow{\text{map of skeins given by cobordism from $L$ to $L^{'}$}} $\\
\begin{tikzpicture}[scale=1, anchorbase]
\filldraw[opacity=0.3, green] (-2,0) rectangle (2,2);
\filldraw[opacity=0.3, blue] (2,0) rectangle (6,2);
\draw[very thick] (2,0) to (2,2);
\begin{scope}[shift={(0,0)}]
\draw[thick] (4.25,1) to (4.7,1);
\draw[thick] (4.3,0.75) to (4.7,0.75);
\draw[thick] (4.25,0.5) to (4.7,0.5);
\draw[thick] (4.3,0.25) to (4.7,0.25);
\draw[thick] (-0.25,1.75) to (0.5,1.75);
\draw[thick] (-0.25,1.25) to (0.5,1.25);
\end{scope}
\begin{scope}[shift={(0,-5.9)}]
\sphere{4.15}{6.75}{0.2}[thick]
\sphere{4.15}{6.25}{0.2}[thick]  
\sphere{0.5}{7.4}{0.25}[thick]
\end{scope}

\end{tikzpicture}
\\ $\biguparrow{\text{map of skeins using the cobordism from $L^{'}$ to $L$}} $\\
\begin{tikzpicture}[scale=1, anchorbase]
\filldraw[opacity=0.3, green] (-2,0) rectangle (2,2);
\filldraw[opacity=0.3, blue] (2,0) rectangle (6,2);
\draw[very thick] (2,0) to (2,2);
\begin{scope}[shift={(-3,0)}]
\begin{scope}[shift={(1.2,0)}]
   \draw[thick]   (1,1.75) to [in =90 , out= 0 ](2,1.25);
\draw[thick]    (2,1.25) to [in =180 , out =270]   (3,1);
\draw[thick]   (1,1.25) to [in =90 , out= 0 ](2,0.75);
\draw[thick]    (2,0.75) to [in =180 , out =270]   (3,0.25);
\draw[thick]    (3,0.75) to [in=90, out=180] (2.5,0.6125);
\draw[thick]    (2.5,0.6125) to [in=180, out=-90] (3,0.5);
\draw[thick] (-0.25,1.75) to (0.5,1.75);
\draw[thick] (-0.25,1.25) to (0.5,1.25);
\draw[thick] (4.25,1) to (4.7,1);
\draw[thick] (4.3,0.75) to (4.7,0.75);
\draw[thick] (4.25,0.5) to (4.7,0.5);
\draw[thick] (4.3,0.25) to (4.7,0.25);
\end{scope}
\begin{scope}[shift={(1.2,-5.9)}]
\sphere{3.15}{6.75}{0.2}[thick]
\sphere{3.15}{6.25}{0.2}[thick]

\sphere{4.15}{6.75}{0.2}[thick]
\sphere{4.15}{6.25}{0.2}[thick]
\sphere{1.1}{7.4}{0.25}[thick]
\sphere{0.5}{7.4}{0.25}[thick]
\end{scope}
\end{scope}
\end{tikzpicture} \\ $\biguparrow{\text{``neck cut" at $L$}} $\\

\begin{tikzpicture}[scale=1, anchorbase]
\filldraw[opacity=0.3, green] (-2,0) rectangle (2,2);
\filldraw[opacity=0.3, blue] (2,0) rectangle (6,2);
\draw[very thick] (2,0) to (2,2);
\begin{scope}[shift={(-1.5,0)}]
   \draw[thick]   (1,1.75) to [in =90 , out= 0 ](2,1.25);
\draw[thick]    (2,1.25) to [in =180 , out =270]   (3,1);
\draw[thick]   (1,1.25) to [in =90 , out= 0 ](2,0.75);
\draw[thick]    (2,0.75) to [in =180 , out =270]   (3,0.25);
\draw[thick]    (3,0.75) to [in=90, out=180] (2.5,0.6125);
\draw[thick]    (2.5,0.6125) to [in=180, out=-90] (3,0.5);
\draw[thick] (4.25,1) to (4.7,1);
\draw[thick] (4.3,0.75) to (4.7,0.75);
\draw[thick] (4.25,0.5) to (4.7,0.5);
\draw[thick] (4.3,0.25) to (4.7,0.25);
\end{scope}
\begin{scope}[shift={(-1.5,-5.9))}]
\sphere{3.15}{6.75}{0.2}[thick]
\sphere{3.15}{6.25}{0.2}[thick]
\sphere{4.15}{6.75}{0.2}[thick]
\sphere{4.15}{6.25}{0.2}[thick]
\end{scope}
\end{tikzpicture} \\  $\biguparrow{\text{``neck cut" at $L^{'}$}}$ \\
\begin{tikzpicture}[scale=1, anchorbase]
\filldraw[opacity=0.3, green] (-2,0) rectangle (2,2);
\filldraw[opacity=0.3, blue] (2,0) rectangle (6,2);
\draw[very thick] (2,0) to (2,2);
\begin{scope}[shift={(-0.8,0)}]
   \draw[thick]   (1,1.75) to [in =90 , out= 0 ](2,1.25);
\draw[thick]    (2,1.25) to [in =180 , out =270]   (3,1);
\draw[thick]   (1,1.25) to [in =90 , out= 0 ](2,0.75);
\draw[thick]    (2,0.75) to [in =180 , out =270]   (3,0.25);
\draw[thick]    (3,0.75) to [in=90, out=180] (2.5,0.6125);
\draw[thick]    (2.5,0.6125) to [in=180, out=-90] (3,0.5);
\draw[thick] (1,1.5) ellipse (0.1 and 0.25);
\draw[thick] (3,0.875) ellipse (0.05 and 0.125);
\draw[thick] (3,0.375) ellipse (0.05 and 0.125);

\end{scope}

\end{tikzpicture} \\ $\biguparrow{\text{isotopy to get transversality: locally $L^{'}\times I$}}$ \\
\begin{tikzpicture}[scale=1, anchorbase]
\filldraw[opacity=0.3, green] (-2,0) rectangle (2,2);
\filldraw[opacity=0.3, blue] (2,0) rectangle (6,2);
\draw[very thick] (2,0) to (2,2);
\draw[thick]   (1,1.75) to [in =90 , out= 0 ](2,1.25);
\draw[thick]    (2,1.25) to [in =180 , out =270]   (3,1);
\draw[thick]   (1,1.25) to [in =90 , out= 0 ](2,0.75);
\draw[thick]    (2,0.75) to [in =180 , out =270]   (3,0.25);
\draw[thick]    (3,0.75) to [in=90, out=180] (2.5,0.6125);
\draw[thick]    (2.5,0.6125) to [in=180, out=-90] (3,0.5);
\draw[thick] (1,1.5) ellipse (0.1 and 0.25);
\draw[thick] (3,0.875) ellipse (0.05 and 0.125);
\draw[thick] (3,0.375) ellipse (0.05 and 0.125);
\end{tikzpicture}  
\caption{Checking independence of the connect sum procedure from the choice of isotopy}
\label{fig:welldefbluegreen}
\end{figure}

\end{proof}
As an immediate corollary, we get the following result:
\begin{Cor}
    For 2-handlebodies $X_1$ and $X_2$, the inclusion functor $$ \overline{C(X_1, L_1)} \times \overline{C(X_2, L_2)} \longrightarrow \overline{C(X_1 \natural X_2 , L_1 \sqcup L_2 )} $$ induces an isomorphism of the spectral sequences:
    $$S^{E^n}(X_1,L_1 , \mathbb{F}) \bigotimes S^{E^n}(X_2,L_2, \mathbb{F}) \longrightarrow S^{E^n}(X_1 \natural X_2,L_1 \sqcup L_2, \mathbb{F})$$\footnote{Recall, that our construction of the differential is for all four manifolds and not just for 2-handlebodies. Hence, this result is slightly more general in that it says if the spectral sequence exists for each $X_i$ (not necessarily 2-handlebodies), then it exists for their boundary connect sum as well. }
\end{Cor}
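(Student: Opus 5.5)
The plan is to deduce the corollary from the pseudo-finality theorem just proved, combined with two facts from Section 3. First, Theorem \ref{thm:quotientcategory} gives an isomorphism of colimits whose universal arrows are compatible. Second, the differentials $\overline{d_n}$ on lasagna modules are themselves universal arrows.

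\textbf{Step 1: an isomorphism on each page.} For a fixed page $E^n$, consider the functor $E^n\boxtimes E^n$ on $\overline{C(X_1,L_1)}\times\overline{C(X_2,L_2)}$, sending $([\Sigma_1],[\Sigma_2])\mapsto E^n([\Sigma_1])\otimes E^n([\Sigma_2])$. Monoidality of $E^n$ under disjoint union (axiom (3) of Definition \ref{KFT}, upgraded to every page by Lemma \ref{ij}) identifies this with $E^n\circ F$, where $F$ is the inclusion functor. The quotient relations used for pseudo-finality are neck cutting, $f\circ g\sim \mathrm{Id}$, and isotopies supported in a single $X_i$. These are $E^n$-invariant, by the construction of $\overline{Z}$ on $G_{\overline{C}}$ and by functoriality of $E^n$ in abstract $S^3$ (the sweep-around move holds on every page). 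Hence the pseudo-finality theorem and Theorem \ref{thm:quotientcategory} give
\[
\mathrm{colim}_{\overline{C(X_1,L_1)}\times\overline{C(X_2,L_2)}} E^n\circ F \;\cong\; S^{E^n}(X_1\natural X_2,L_1\sqcup L_2,\mathbb{F}).
\]
The left side is the colimit over a product category of a tensor product of functors. Since $\otimes$ over a field commutes with colimits in each variable separately, it equals $S^{E^n}(X_1,L_1)\otimes S^{E^n}(X_2,L_2)$.

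\textbf{Step 2: compatibility with the differentials.} I will check that this isomorphism intertwines $\overline{d_n}\otimes 1+1\otimes\overline{d_n}$ on the left with $\overline{d_n}$ on the right. On each object, the page differential of the split link $L\sqcup L'$ corresponds under the disjoint-union map to the tensor-product differential, since that map is a filtered chain map. Precomposing with the structure maps $\pi$, both sides are then arrows out of the same colimit satisfying the same universal property. By uniqueness they agree, exactly as in the purple-square argument in the Remark after the proof of Theorem \ref{thm: main}.

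\textbf{Step 3: passing to the next page.} Taking homology and using the Künneth formula over a field gives $H_*(S^{E^n}_1\otimes S^{E^n}_2)\cong H_*(S^{E^n}_1)\otimes H_*(S^{E^n}_2)$. When each $X_i$ is a 2-handlebody, Theorem \ref{thm: main} identifies each factor with $S^{E^{n+1}}(X_i,L_i)$. So the isomorphism on page $n$ induces the one on page $n+1$, which is what an isomorphism of spectral sequences requires.

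I expect the main obstacle to be Step 1, specifically verifying that the equivalence relation used in the pseudo-finality proof is genuinely $E^n$-invariant. In particular, the two neck cuts at $L$ and $L'$, followed by the cobordisms $L\to L'$ and $L'\to L$, must induce the identity on $E^n$. My first approach is to show that these composites are isotopic rel boundary to $\Sigma_0^1\cup J\cup\Sigma_1^2$. The identity on $E^n$ would then follow from isotopy invariance on the $E^2$ page together with Lemma \ref{fg}.
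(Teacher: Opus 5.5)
Your proposal is correct and is essentially the paper's argument. The paper states the corollary as an immediate consequence of three facts: the pseudo-finality of the inclusion functor, Theorem \ref{thm:quotientcategory}, and the description of $\overline{d_n}$ as a universal arrow. Your Steps 1--3 spell out what the paper leaves implicit: that the colimit over the product category is the tensor product of the two lasagna modules, the uniqueness check for the differentials, and K\"unneth combined with Theorem \ref{thm: main} for each $X_i$.

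On the obstacle you anticipate: what is needed is only that the two paths in Figure \ref{fig:welldefbluegreen} agree on $E^n$. You should not aim to show that $J$ followed by its reverse acts as the identity, since that fails in general. The two paths do agree, because each amounts to tensoring with the image of $1$ under $E^n$ of the same surface $J$, bent into a cobordism out of the empty link.
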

Let us see what happens to the spectral sequences under four-handle attachment. This is an identical construction to the one in \cite{mn22}.
\begin{Thm}
    Let $X^{'}$ be a closed four-manifold obtained by attaching a four-handle to $X$. Then the inclusion $X\rightarrow X^{'}$ induces a pseudo-final map $$\overline{C(X,\emptyset)} \rightarrow \overline{C(X^{'},\emptyset)}$$
\end{Thm}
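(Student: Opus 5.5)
We follow the same strategy as in \cite{mn22} and as in the connect-sum argument above. We verify the two conditions of Definition \ref{def:final} after passing to an $E^n$-invariant quotient of $\overline{C(X',\emptyset)}$, which by Theorem \ref{thm:quotientcategory} is enough for pseudo-finality.

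Write $X' = X \cup_{S^3} B^4_{(4)}$, where $B^4_{(4)}$ is the attached four-handle. The inclusion functor $\iota:\overline{C(X,\emptyset)}\to\overline{C(X',\emptyset)}$ views a skein in $X$ as a skein in $X'$ that misses the interior of the four-handle.

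For the first condition of Definition \ref{def:final}, take any skein $[\Sigma]\in\overline{C(X',\emptyset)}$. A map in $G_{\overline{C(X',\emptyset)}}$ lets us shrink the input balls, as in the proof of Theorem \ref{thm:2hdlbdy}. Since $\Sigma$ is $2$-dimensional and the four-handle has a $4$-dimensional core, a generic choice of the cocore point $p\in B^4_{(4)}$ lies off $\Sigma$ and off the input balls. Isotope radially away from $p$, so that $\Sigma$ and all input balls land in $X'\setminus B^4_{(4)}\cong X$ after shrinking the handle. This isotopy is a morphism in $C(X',\emptyset)$, and it connects $[\Sigma]$ to an object of the form $\iota([\Sigma_0])$.

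For the second condition, suppose two such pushes give morphisms $[\Sigma]\to\iota[\Sigma_0]$ and $[\Sigma]\to\iota[\Sigma_1]$. Concatenating them gives an isotopy $\Sigma_{(t)}$ in $X'$ from $\Sigma_0$ to $\Sigma_1$. Put this isotopy in general position with respect to the point $p$. Since it is a $3$-parameter family of points and $p$ has codimension $4$, the track of the isotopy, a $3$-dimensional object, misses $p$. We can therefore push the whole isotopy off the four-handle, radially and uniformly in $t$, so that it becomes an isotopy inside $X$. Any dotted or cobordism morphisms appearing in the zigzag are supported in balls, and these can likewise be pushed into $X$. The resulting zigzag relating $\Sigma_0$ and $\Sigma_1$ then consists of images under $\iota$ of morphisms in $\overline{C(X,\emptyset)}$. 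The quotient needed here is the same one used before, $f\circ g\sim \mathrm{Id}$; it only identifies big and small input balls and is $E^n$-invariant.

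\textbf{Main obstacle.} The hard step is the second condition when the isotopy between the two pushes is only defined up to isotopy of isotopies. We must check that the choice of push-off commutes with the identifications of morphisms, meaning the isotopy class of the isotopy itself. We will handle this with the dimension count once more. A $2$-parameter family of isotopies traces at most a $4$-dimensional set, so it can still be perturbed to miss $p$ generically, although this case is borderline. If genericity fails, we fall back on the sweep-around invariance of each $E^n$, which was shown via Lemma \ref{fg}; this invariance should absorb the passage of the isotopy through the point $p$. Compatibility with the differentials $\overline{d_n}$ then follows from the naturality clause of Theorem \ref{thm:quotientcategory}, exactly as in the Remark after the proof of Theorem \ref{thm: main}.
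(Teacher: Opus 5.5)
Your proposal is correct and follows essentially the same route as the paper. It uses the fact that the cocore of the four-handle is a point to push the $2$-dimensional skein (and, via maps in $G_{\overline{C(X',\emptyset)}}$, the input balls) into $X$. For the second finality condition it pushes the $3$-dimensional track of an isotopy in $X'$ between skeins lying in $X$ off the cocore, which is exactly the paper's claim that surfaces in $X$ isotopic in $X'$ are isotopic in $X$.

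Your ``main obstacle'' paragraph is unnecessary. Definition~\ref{def:final} only asks for the existence of a morphism of $\overline{C(X,\emptyset)}$ whose image equals the given one in $X'$, and the radial push-off itself already supplies the required isotopy of isotopies. Its dimension count is also wrong: a one-parameter family of isotopies sweeps out a $4$-dimensional set, which generically meets a point of a $4$-manifold in isolated points rather than missing it.
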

\begin{proof}
Since the co-core of a four-handle is zero-dimensional and the skeins are 2-dimensional, one can map the skein through an isotopy to another skein that is disjoint from the co-core. One can similarly use the maps in $G_{\overline{C(X,\emptyset)}}$ to make the input balls disjoint from the co-core. This proves the first condition necessary for finality as in \ref{def:final}. The other condition follows from the fact that any two surfaces in $X$ that are isotopic in $X^{'}$ are also isotopic through an isotopy supported in $X$.
    
\end{proof}
Finally, since the connect-sum of closed manifolds can be realized by first removing the four handles, followed by taking a boundary connect sum and then finally attaching a four-handle.
That is, $$X_1\# X_2 = \{(X_1 -B^4 )\ \natural \  (X_2-B^4)\}\ \cup \ \text{4-handle}$$
Hence, we get the following result:
\begin{Thm}
    For closed manifolds $X_1$ and $X_2$, we have a pseudo-final map given by inclusion:
    $$ \overline{C(X_1, \emptyset)} \times \overline{C(X_2, \emptyset)} \longrightarrow \overline{C(X_1 \# X_2 , \emptyset )} $$
    In addition, if both $X_i$'s can be given a handlebody structure without 1- and 3-handles, then we have an isomorphism of spectral sequences:
    $$S^{E^n}(X_1,\emptyset, \mathbb{F}) \bigotimes S^{E^n}(X_2,\emptyset, \mathbb{F}) \longrightarrow S^{E^n}(X_1 \# X_2,\emptyset, \mathbb{F})$$
\end{Thm}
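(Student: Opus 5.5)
The plan is to factor the inclusion through the boundary connect sum of the punctured manifolds and reuse the two pseudo-finality results just proved. Write $X_i^\circ = X_i - B^4$, so $X_1\# X_2 = (X_1^\circ \natural X_2^\circ)\cup \text{4-handle}$. The inclusion $\overline{C(X_1,\emptyset)}\times\overline{C(X_2,\emptyset)}\to\overline{C(X_1\# X_2,\emptyset)}$ should then be identified, up to natural isomorphism, with the composite
\[
\overline{C(X_1^\circ,\emptyset)}\times\overline{C(X_2^\circ,\emptyset)} \longrightarrow \overline{C(X_1^\circ\natural X_2^\circ,\emptyset)} \longrightarrow \overline{C(X_1\# X_2,\emptyset)},
\]
precomposed with the product of the two functors $\overline{C(X_i^\circ,\emptyset)}\to\overline{C(X_i,\emptyset)}$.

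The first step applies the four-handle theorem to each factor, so that $\overline{C(X_i^\circ,\emptyset)}\to\overline{C(X_i,\emptyset)}$ is pseudo-final. Since these functors are bijective on the relevant isotopy data after quotienting, I will replace each $\overline{C(X_i,\emptyset)}$ by $\overline{C(X_i^\circ,\emptyset)}$ in the source. Concretely, any skein in $X_i$ is pushed off the removed ball by an isotopy, using $G$-maps to shrink input balls as needed.

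The second step applies the boundary connect sum theorem to the middle arrow, and the four-handle theorem once more to the last arrow. A composite of pseudo-final functors is pseudo-final: take the equivalence relation generated by the two $\overline{Z}$-invariant relations, which is still $\overline{Z}$-invariant, and check that the two finality zigzags of Definition \ref{def:final} concatenate. The mild obstacle is that the quotient relations live on different categories, so I need the relation on the middle category to be transported compatibly along the last functor. I expect this to hold because both relations are generated by $f\circ g\sim \text{Id}$ and the dotted-sphere relation, and the functors preserve these.

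For the spectral sequence statement, I will use the hypothesis that there are no 1- and 3-handles. Then each $X_i^\circ$ is a 2-handlebody, so Theorem \ref{thm: main} gives the $LSS$ for each $X_i^\circ$, and the previous corollary gives $S^{E^n}(X_1^\circ)\otimes S^{E^n}(X_2^\circ)\cong S^{E^n}(X_1^\circ\natural X_2^\circ)$ compatibly with the differentials $\overline{d_n}$. The four-handle pseudo-finality then gives isomorphisms $S^{E^n}(X_i^\circ)\cong S^{E^n}(X_i)$ and $S^{E^n}(X_1^\circ\natural X_2^\circ)\cong S^{E^n}(X_1\#X_2)$ by Theorem \ref{thm:quotientcategory}. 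These commute with the differentials by the same uniqueness-of-universal-arrow argument as in the Remark following the main proof: $i_*^{-1}\circ\overline{d_n}\circ i_*$ satisfies the universal property, so it equals the differential on the source colimit. Tensor products commute with homology over a field (Künneth), so the isomorphisms on the $E^n$ pages are compatible with passing to the $E^{n+1}$ pages, which gives an isomorphism of spectral sequences.

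The step I expect to be the main obstacle is showing that the differential on the tensor product agrees with $\overline{d_n}$ on $S^{E^n}(X_1\#X_2)$, namely $d\otimes 1 + 1\otimes d$. This relies on monoidality of $E^n$ being compatible with $d_n$, which follows from axiom (3) of Definition \ref{KFT} (a filtered chain map inducing the tensor isomorphism). I would check it at the level of each object pair and then pass to colimits.
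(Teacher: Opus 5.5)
Your proposal follows the paper's route: the paper deduces this theorem directly from the decomposition $X_1\# X_2=\{(X_1-B^4)\natural(X_2-B^4)\}\cup\text{4-handle}$, combined with the boundary-connect-sum pseudo-finality theorem, its corollary (which needs each $X_i-B^4$ to be a 2-handlebody), and the four-handle pseudo-finality theorem. Your additional checks are details the paper leaves implicit: that pseudo-final functors compose; that replacing $\overline{C(X_i,\emptyset)}$ by $\overline{C(X_i^\circ,\emptyset)}$ is a two-out-of-three property of final functors (your identification should read: the inclusion precomposed with the puncturing functors agrees with the composite, not the other way round); and that $d_n$ corresponds to $d_n\otimes 1+1\otimes d_n$ via axiom (3) and the K\"unneth property of spectral sequences over a field.
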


\subsection{Future Directions:} Throughout this paper, we point out a general strategy for relating lasagna modules of different TQFTs
\begin{enumerate}
    \item Find a functorial fix of a known KFT over $\mathbb{Q}$ or some other field of characteristic zero.
    \item Apply the techniques and theorems developed in this paper.
    \item Try developing (possibly weaker) notion of quantum degree for other KFTs so one can relate $E^\infty$ TQFT with the convergence TQFT (in other words, the homology of the filtered chain complex corresponding to the spectral sequence).
\end{enumerate}
There is already work in progress to fix the instanton spectral sequence in \cite{kronheimer2019instantonsbarnatanhomology} over $\mathbb{Q}$. One may also consider other perturbations of the Khovanov differential to get various lasagna modules. Lastly, we point out that various Floer homologies behave much better under the braid group action. Hence, they potentially will give much better computations and non-vanishing results for lasagna modules. This can be seen by observing that the Alexander polynomial (whose categorification is given by a Floer theory) has a much nicer cabling formula than the Jones polynomial (which corresponds to Khovanov homology).
We also wish to address whether a similar spectral sequence construction exists for the lasagna theory with 1-dimensional inputs defined in \cite{ren2025khovanovskeinlasagnamodules}. For this new lasagna theory, a 2-handlebody formula was recently proved in \cite{montague2026handledecompositions1dimensionalinputs}, one can analyze this formula to see if our constructions can be replicated for the 1-dimensional lasagna.
Finally, as an immediate next project, we wish to address the various formulas satisfied by our spectral sequence construction; a connect-sum formula being one of them. Another possible result would be to relate this spectral sequence to a spectral sequence of the Rozansky-Willis invariant defined in \cite{willis2019khovanovhomologylinksrs2times}.
In \cite{sullivan2024kirbybeltscategorifiedprojectors}, the authors proved the following equality:
$$S^{KhR_{2}}(S^2 \times D^2 , \ L \ )=RW(L)\otimes S^{KhR_{2}}(S^2 \times D^2,\emptyset)$$ where $RW(L)$ is the invariant defined in \cite{willis2019khovanovhomologylinksrs2times}.
We expect a similar result to be true for the spectral sequence construction. 

\printbibliography

\end{document}